\DeclareMathOperator*{\osc}{osc}
\newtheorem{teo}{Theorem}[section]
\newtheorem{lm}[teo]{Lemma}
\newtheorem{prop}[teo]{Proposition}
\newtheorem{coro}[teo]{Corollary}
\newtheorem*{main}{Main Theorem}
\theoremstyle{definition}
\newtheorem{oss}[teo]{Remark}
\newtheorem*{ack}{Acknowledgements}
\numberwithin{equation}{section}
\author[Bousquet]{Pierre Bousquet}
\address[P. Bousquet]{Institut de Math\'ematiques de Toulouse, CNRS UMR 5219
\newline\indent Universit\'e de Toulouse
\newline\indent F-31062 Toulouse Cedex 9, France.}
\email{pierre.bousquet@math.univ-toulouse.fr}
\author[Brasco]{Lorenzo Brasco}
\address[L.\ Brasco]{Dipartimento di Matematica e Informatica
\newline\indent
Universit\`a degli Studi di Ferrara
\newline\indent
Via Machiavelli 35
\newline\indent
44121 Ferrara, Italy}
\address{{\it and } Institut de Math\'ematiques de Marseille
\newline\indent
Aix-Marseille Universit\'e,
Marseille, France}
\email{lorenzo.brasco@unife.it}
\title[Orthotropic $p-$harmonic functions in the plane]{$C^1$ regularity of\\ orthotropic $p-$harmonic functions\\ in the plane}
\date{\today}
\keywords{Degenerate and singular problems; regularity of minimizers}
\subjclass[2010]{49N60, 49K20, 35B65}
\begin{document}

\begin{abstract}
We prove that local weak solutions of the orthotropic $p-$harmonic equation in $\mathbb{R}^2$ are $C^1$ functions.
\end{abstract}

\maketitle

\begin{center}
\begin{minipage}{11cm}
\small
\tableofcontents
\end{minipage}
\end{center}

\section{Introduction}

\subsection{The result}
Let $\Omega\subset \mathbb{R}^2$ be an open set and let \(u\in W^{1,p}_{\rm loc}(\Omega)\) be a local weak solution of the {\it orthotropic $p-$Laplace equation}
\begin{equation}
\label{eq-Euler-classic}
\sum_{i=1}^{2} \left(|u_{x_i}|^{p-2}\,u_{x_i}\right)_{x_i}=0.
\end{equation}
This means that for every  $\Omega'\Subset \Omega$ and every \(\varphi\in W^{1,p}_0(\Omega')\), we have
\begin{equation}
\label{eq-Euler-basic}
\sum_{i=1}^{2} \int_{\Omega'}|u_{x_i}|^{p-2}\,u_{x_i}\, \varphi_{x_i}\,dx=0.
\end{equation}
In the recent literature, such an equation has sometimes been called the {\it pseudo $p-$Laplace equation}. 
We decided to adopt the terminology {\it orthotropic $p-$Laplace equation} in order to emphasize the role played by the coordinate system. Indeed, let us recall that if \(u\in W^{1,p}_{\rm loc}(\Omega)\) is a local weak solution of the usual {\it $p-$Laplace equation}, i.e.
\[
\sum_{i=1}^{2} \left(|\nabla u|^{p-2}\,u_{x_i}\right)_{x_i}=0,
\] 
then for every linear isometry \(A:\mathbb{R}^2\to \mathbb{R}^2\), \(u\circ A\) is still a local weak solution of this equation on \(A^{-1}(\Omega)\). This property fails to be true for equation \eqref{eq-Euler-classic}, but it still holds if $A$ belongs to the dihedral group $\mathrm{D}_2$, i.e. the group of symmetries of the square \((-1,1)\times(-1,1)\).   
\vskip.2cm\noindent
A function \(u\in W^{1,p}_{\rm loc}(\Omega)\) is a local weak solution if and only of it is a local minimizer of the functional 
\[
\mathfrak{F}(\varphi;\Omega'):=\sum_{i=1}^2 \frac{1}{p}\,\int_{\Omega'}|\varphi_{x_i}|^p  \,dx,\qquad \varphi\in W^{1,p}_{\rm loc}(\Omega),\ \Omega'\Subset\Omega\subset\mathbb{R}^2.
\]
This easily follows from the convexity of the functional \(\mathfrak{F}\). We recall that $u\in W^{1,p}_{\rm loc}(\Omega)$ is a local minimizer of $\mathfrak{F}$ if 
\[
\mathfrak{F}(u;\Omega')\le \mathfrak{F}(\varphi;\Omega'),\qquad \mbox{ for every }u-\varphi\in W^{1,p}_0(\Omega'), \ \Omega'\Subset\Omega.
\]
In the recent paper \cite{BBJ}, we proved that for $p\ge 2$ any such local minimizer is a locally Lipschitz function (actually, the case \(1<p<2\) is a mere application of \cite[Theorem 2.2]{FF}).
The aim of this paper is to go one step further and prove the following additional regularity.
\begin{main}
\label{teo:theorem-continuity-N2}
Every local minimizer $U\in W^{1,p}_{\rm loc}(\Omega)$ of the functional $\mathfrak{F}$ is a $C^1$ function.
\end{main}

\begin{oss}
It is easy to see that the function
\[
u(x_1,x_2)=|x_1|^\frac{p}{p-1}-|x_2|^\frac{p}{p-1},\qquad (x_1,x_2)\in\mathbb{R}^2,
\]
is a local weak solution of \eqref{eq-Euler-classic}. Observe that for $p>2$, $u$ is not $C^2$, but only $C^{1,1/(p-1)}$. As in the case of the standard $p-$Laplacian, we conjecture this to be the sharp regularity of local weak solutions.
\end{oss}

\subsection{Method of proof}

The proof of the Main Theorem is greatly inspired by that of \cite[Theorem 11]{Santambrogio-Vespri} by Santambrogio and Vespri, which in turn exploits an idea introduced by DiBenedetto and Vespri in \cite{DiBV}. However, since our equation is much more singular/degenerate than theirs, most of the estimates have to be recast and the argument needs various nontrivial adaptations.
 In order to neatly explain the method of proof and highlight the differences with respect to \cite{Santambrogio-Vespri}, let us first recall their result.
\par
In \cite{Santambrogio-Vespri} it is shown that in $\mathbb{R}^2$, local weak solutions of the variational equation
\begin{equation}
\label{SV}
\mathrm{div} \nabla H(\nabla u)=0,
\end{equation}
are such that $x\mapsto\nabla H(\nabla u(x))$ is continuous, provided that:
\begin{itemize}
\item $\nabla H(\nabla u)\in W^{1,2}_{\rm loc}\cap L^\infty_{\rm loc}$;
\vskip.2cm
\item $H:\mathbb{R}^2\to [0,\infty)$ is a $C^2$ convex function such that there exist $M\ge 0$ and $0<\lambda\le \Lambda$ for which
\begin{equation}
\label{hessienne}
\lambda\,|z|^{p-2}\,|\xi|^2\le\langle D^2 H(z)\,\xi,\xi\rangle\le  \Lambda\,|z|^{p-2}\, |\xi|^2,\qquad \mbox{ for every }\xi\in\mathbb{R}^2,\ |z|\ge M.
\end{equation}
\end{itemize}
The last assumption implies that \eqref{SV} is a degenerate/singular elliptic equation, with {\it confined degeneracy/singularity}. Indeed, on the set where the gradient of a Lipschitz solution $u$ satisfies  $|\nabla u|\ge M$, the equation behaves as a  uniformly elliptic equation. By using the terminology of \cite{BBJ}, we can say that \eqref{SV} has a {\it $p-$Laplacian structure at infinity}.
\par
The proof of the  continuity of $\nabla H(\nabla u)$ in \cite{Santambrogio-Vespri} relies on the following De Giorgi--type lemma: given a ball \(B_R\) of radius $R$, if  a component $H_{x_i}(\nabla u)$ of the vector field \(\nabla H (\nabla u)\) has large oscillations only on a small portion of \(B_R\), then the global oscillation of \(H_{x_i}(\nabla u)\) on the ball \(B_{R/2}\) is reduced (in a precise quantitative sense).  Such a result amounts to an $L^\infty$ estimate for (a nonlinear function of) the gradient, which in turn relies on the Caccioppoli inequality for the linearized equation
\begin{equation}
\label{linearized}
\mathrm{div} \left(D^2(\nabla u)\,\nabla u_{x_i}\right)=0.
\end{equation}
On the contrary, if $H_{x_i}(\nabla u)$ has large oscillations on a large portion of $B_R$, then one exploits the fact  that a function $W^{1,2}\cap L^\infty$ in the plane is such that:
\begin{itemize}
\item[(A1)] either its Dirichlet energy in a crown contained in $B_R$ is large;
\vskip.2cm
\item[(A2)] or the function itself is large on a circle contained in $B_R$. 
\end{itemize}
When (A2) occurs, the structure of the linearized equation \eqref{linearized} allows to prove a {\it minimum principle} for $H_{x_i}(\nabla u)$, which implies that $H_{x_i}(\nabla u)$ is large on the whole disc bounded  by the above mentioned circle. This again leads to a decay of the oscillation of $H_{x_i}(\nabla u)$ (this time because the infimum increases when shrinking the ball).
\par
Then the continuity result of \cite{Santambrogio-Vespri} is achieved by constructing inductively a decreasing sequence of  balls and using the dichotomy above at each step. The important point is that since $H_{x_i}(\nabla u)$ has finite Dirichlet energy, then possibility (A1) {\it can occur only finitely many times}. Hence, the oscillation of $H_{x_i}(\nabla u)$ decays to $0$, as desired.
\vskip.2cm\noindent
Unfortunately, our equation \eqref{eq-Euler-classic} has not a $p-$Laplacian structure at infinity, i.\,e.\,\eqref{hessienne} is not satisfied. Indeed, in our case we have 
\[
H(z)=\sum_{i=1}^2 \frac{|z_i|^p}{p}\quad \mbox{ so that }\quad 
D^2H (z)=(p-1)\,\left[\begin{array}{cc} |z_1|^{p-2} &0\\ 0& |z_2|^{p-2} \end{array}\right],\quad z=(z_1,z_2)\in\mathbb{R}^2.
\]
In particular, \(D^2H(z)\) is degenerate/singular on the union of the two axes $\{z_1=0\}\cup \{z_2=0\}$
and our equation does not fit in the framework of \cite{Santambrogio-Vespri}. Thus, even if the proof of the Main Theorem will follow the guidelines illustrated above, we have to overcome the additional difficulties linked to the more degenerate/singular structure of \eqref{linearized}. In particular, in the case $p>2$, we will need a new Caccioppoli inequality, which weirdly mixes different components of the gradient (see Proposition \ref{prop:mixing}). This is one of the main novelties of the paper.

\begin{oss}[Stream functions]
For $1<p<\infty$, let us set $p'=p/(p-1)$. When $\Omega\subset\mathbb{R}^2$ is simply connected, to every local weak solution $u\in W^{1,p}_{\rm loc}(\Omega)$ of \eqref{eq-Euler-classic} one can associate a {\it stream function} $v\in W^{1,p'}_{\rm loc}(\Omega)$, such that
\[
v_{x_1}=|u_{x_2}|^{p-2}\,u_{x_2}\qquad \mbox{ and }\qquad v_{x_2}=-|u_{x_1}|^{p-2}\,u_{x_1}.
\]
It is readily seen that $v$ is a weak solution of
\[
\sum_{i=1}^{2} \left(|v_{x_i}|^{p'-2}\,v_{x_i}\right)_{x_i}=0.
\]
Existence of such a function $v$ is a straightforward consequence of the {\it Poincar\'e Lemma}, once it is observed that \eqref{eq-Euler-classic} implies that the vector field
\[
\left(|u_{x_1}|^{p-2}\,u_{x_1},|u_{x_2}|^{p-2}\,u_{x_2}\right),
\]
is divergence free (in the distributional sense). This would allow to reduce the proof of the Main Theorem to the case $1<p\le 2$ only. However, this kind of argument is very specific to the homogeneous equation and {\it already fails} in the case
\[
\sum_{i=1}^{2} \left(|u_{x_i}|^{p-2}\,u_{x_i}\right)_{x_i}=\lambda\in\mathbb{R},
\]
which on the contrary is covered by our method (indeed, observe that the previous equation and \eqref{eq-Euler-classic} have the same linearization \eqref{linearized}, thus the Main Theorem still applies). More generally, we observe that our method of proof can be adapted to treat the case (as in \cite{Santambrogio-Vespri}) of
\[
\sum_{i=1}^{2} \left(|u_{x_i}|^{p-2}\,u_{x_i}\right)_{x_i}=f,
\]
under suitable (not sharp) assumptions\footnote{As in the case of the ordinary $p-$Laplacian (see \cite[Corollary 1.6]{KM}), the sharp assumption should be $f\in L^{2,1}_{\rm loc}$, the latter being a Lorentz space. For $p>2$ our proof requires
\[
|u_{x_j}|^\frac{p-2}{2}\,u_{x_j}\in W^{1,2}_{\rm loc}(\Omega),
\]
a result which is true only when $f$ enjoys suitable differentiability properties.} on $f$. For these reasons, we avoided to use this argument based on stream functions.
\end{oss}

\subsection{Plan of the paper}
We first warn the reader that almost every section is divided in two parts, one for the degenerate case $p>2$ and the other for the singular one $1<p < 2$ ( the case \(p=2\) corresponds to the standard Laplacian). Though the methods of  proof for the two cases look very much the same, there are some important differences which lead us to think that it is better to separate the two cases.
\par
In Section \ref{sec:2} we introduce the technical machinery and present some basic integrability properties of  solutions and their derivatives, needed throughout the whole paper. Section \ref{sec:3} is devoted to some new Caccioppoli inequalities for the gradient of a local minimizer. The core of the paper is represented by Sections \ref{sec:4} and \ref{sec:5}, concerning decay estimates for a nonlinear function of the gradient (case $p>2$) or for the gradient itself (case $1<p\le 2$). Finally, the proof of the Main Theorem is postponed to Section \ref{sec:6}. The paper ends with two Appendices containing technical facts.

\begin{ack}
The idea for the weird Caccioppoli inequality of Proposition \ref{prop:mixing} comes from a conversation with Guillaume Carlier in March 2011, we wish to thank him. Peter Lindqvist is gratefully acknowledged for a discussion on stream functions in June 2014.
Part of this work has been written during some visits of the first author to Marseille and Ferrara and of the second author to Toulouse. Hosting institutions and their facilities are kindly acknowledged. 
The second author is a member of the {\it Gruppo Nazionale per l'Analisi Matematica, la Probabilit\`a
e le loro Applicazioni} (GNAMPA) of the {\it Istituto Nazionale di Alta Matematica} (INdAM).
\end{ack}

\section{Preliminaries}
\label{sec:2}

\subsection{Notation}
Given \(\lambda>0\) and a ball \(B\subset \mathbb{R}^2\) of radius \(R>0\), we denote by \(\lambda\,B\) the ball with the same center and radius \(\lambda\,R\). 
\par
We define for every \(q >-1\) the function $g_q:\mathbb{R}\to\mathbb{R}$ as
\begin{equation}
\label{gq}
g_q(t)=|t|^q\, t,\qquad t\in \mathbb{R}.
\end{equation}
Then \(g_q\) is a homeomorphism and \(g_{q}^{-1}= g_{-q/(q+1)}\). Observe that
\[
|t|^{q}\,t\le \alpha\qquad \Longleftrightarrow \qquad t\le |\alpha|^{-\frac{q}{q+1}}\,\alpha,
\]
a fact that will be used repeatedly.
\par
Let $U\in W^{1,p}_{\rm loc}(\Omega)$ be a given local minimizer of $\mathfrak{F}$. We fix a ball \(B\Subset \Omega\). There exists \(\lambda_B>1\) such that $\lambda_B\,B\Subset \Omega$ as well.
If $\{\rho_{\varepsilon}\}_{\varepsilon>0}\subset C^{\infty}_0(B_\varepsilon)$ is a smooth convolution kernel (here, \(B_{\varepsilon}\) refers to the ball with center \(0\) and radius \(\varepsilon\)), we define \(U^\varepsilon:=U\ast\rho_{\varepsilon}\in W^{1,p}(\Omega_{\varepsilon})\) where \(\Omega_{\varepsilon}:=\{x\in \Omega : \mathrm{dist}(x, \partial \Omega)>\varepsilon\}\).
By definition of $U^\varepsilon$ there exists $0<\varepsilon_0<1$ such that for every $0<\varepsilon\le \varepsilon_0$
\begin{equation}
\label{Uepsilon}
\|U^\varepsilon\|_{W^{1,p}(B)}= \|\nabla U^\varepsilon\|_{L^p(B)}+\|U^\varepsilon\|_{L^p(B)}\le \|\nabla U\|_{L^p(\lambda_B\,B)}+\|U\|_{L^p(\lambda_B\,B)}.
\end{equation}

\subsection{Regularization scheme, case $p>2$}

As in \cite{BBJ}, we consider the  minimization problem
\begin{equation}
\label{approximation}
\min\left\{\sum_{i=1}^2\frac{1}{p}\,\int_{B}|w_{x_i}|^p\,dx + \frac{p-1}{2}\,\varepsilon\, \int_{B}|\nabla w|^2 \,dx\, :\, w-U^\varepsilon \in W^{1,p}_0(B)\right\}.
\end{equation}
Since the functional is strictly convex, there exists a unique solution \(u^\varepsilon\), which is smooth on \(\overline{B}\) (see e.g. \cite[Theorem 2.4]{BBJ}). Moreover, \(u^\varepsilon\) satisfies the Euler-Lagrange equation
\[
\sum_{i=1}^2\int_{B} (|u^\varepsilon_{x_i}|^{p-2}+(p-1)\,\varepsilon)\,u^\varepsilon_{x_i}\,\varphi_{x_i}  \,dx=0,\qquad \mbox{ for every }\varphi\in W^{1,p}_0(B).
\]
We take \(\varphi\in C^2\) with compact support in $B$. Then for \(j\in \{1, 2\}\),  the partial derivative \(\varphi_{x_j}\)  is still an admissible test function. An integration by parts leads to
\begin{equation}
\label{Differentiation_Euler_equation_varepsilon}
\sum_{i=1}^2\int_{B}\Big(|u^\varepsilon_{x_i}|^{p-2}+\varepsilon\Big)\,u^\varepsilon_{x_i x_j}\,\varphi_{x_i}  \,dx=0,\qquad j=1,2.
\end{equation}
As usual, by a density argument, the equation still holds with $\varphi \in W^{1,2}_0(B)$. 
We now collect some uniform estimates on $u^\varepsilon$. 
\begin{lm}[Uniform energy estimate]
\label{lm:energy}
There exists a constant $C=C(p)>0$ such that for every $0<\varepsilon<\varepsilon_0$ the following estimate holds 
\begin{equation}
\label{uniformeg}
\int_B |\nabla u^\varepsilon|^p\, dx\le C\left(\int_{\lambda_B\,B} |\nabla U|^p\,dx+\,\varepsilon^\frac{p}{p-2}\,|B|\right).
\end{equation}
Moreover, the family $\{u^{\varepsilon}\}_{0<\varepsilon<\varepsilon_0}$ converges weakly in $W^{1,p}(B)$ and strongly in $L^p(B)$ to $U$.
\end{lm}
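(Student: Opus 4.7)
The plan is to exploit the minimality of $u^\varepsilon$ in \eqref{approximation} by testing against the admissible competitor $U^\varepsilon$ itself (note that $u^\varepsilon-U^\varepsilon\in W^{1,p}_0(B)$). This gives
\[
\sum_{i=1}^2\frac{1}{p}\int_B|u^\varepsilon_{x_i}|^p\,dx+\frac{p-1}{2}\,\varepsilon\int_B|\nabla u^\varepsilon|^2\,dx\le\sum_{i=1}^2\frac{1}{p}\int_B|U^\varepsilon_{x_i}|^p\,dx+\frac{p-1}{2}\,\varepsilon\int_B|\nabla U^\varepsilon|^2\,dx.
\]
Dropping the non-negative $\varepsilon$ term on the left-hand side, it remains to bound the right-hand side by the quantity appearing in \eqref{uniformeg}.

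For the first sum on the right, a direct use of \eqref{Uepsilon} yields $\int_B|\nabla U^\varepsilon|^p\,dx\le \int_{\lambda_B B}|\nabla U|^p\,dx$. For the cross term $\varepsilon\int_B|\nabla U^\varepsilon|^2\,dx$, since $p>2$ I would apply Young's inequality with conjugate exponents $p/2$ and $p/(p-2)$:
\[
\varepsilon\,|\nabla U^\varepsilon|^2\le \frac{2}{p}\,|\nabla U^\varepsilon|^p+\frac{p-2}{p}\,\varepsilon^{\frac{p}{p-2}}.
\]
Integrating over $B$ and combining with \eqref{Uepsilon} produces the desired estimate \eqref{uniformeg}, with a constant depending only on $p$.

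For the convergence statement, the uniform $L^p$ bound on $\nabla u^\varepsilon$ obtained above, together with the boundary condition $u^\varepsilon-U^\varepsilon\in W^{1,p}_0(B)$ and Poincar\'e's inequality applied to $u^\varepsilon-U^\varepsilon$, yields a uniform $W^{1,p}(B)$ bound on the family $\{u^\varepsilon\}$. By Rellich--Kondrachov, along any sequence $\varepsilon_n\downarrow 0$ we can extract a subsequence (not relabeled) and a function $u\in W^{1,p}(B)$ with $u^{\varepsilon_n}\rightharpoonup u$ weakly in $W^{1,p}(B)$ and strongly in $L^p(B)$. Since $U^{\varepsilon_n}\to U$ in $W^{1,p}(B)$, the trace difference $u-U$ arises as the weak limit in $W^{1,p}_0(B)$ of $u^{\varepsilon_n}-U^{\varepsilon_n}$, hence lies in $W^{1,p}_0(B)$.

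To identify $u=U$, pass to the limit in the minimality inequality: weak $W^{1,p}$--lower semicontinuity of $\varphi\mapsto \int_B|\varphi_{x_i}|^p\,dx$, the vanishing of the $\varepsilon$--term (using \eqref{uniformeg} and $p>2$), and the strong convergence $U^{\varepsilon_n}\to U$ in $W^{1,p}(B)$, together yield $\mathfrak{F}(u;B)\le\mathfrak{F}(U;B)$. Since $U$ is a local minimizer and $u-U\in W^{1,p}_0(B)$, the reverse inequality also holds, so equality is attained. The strict convexity of $t\mapsto|t|^p$ applied coordinate by coordinate then forces $u_{x_i}=U_{x_i}$ for $i=1,2$, and together with $u-U\in W^{1,p}_0(B)$ this gives $u=U$. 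Uniqueness of the limit upgrades the subsequential convergence to convergence of the full family. The main (mild) obstacle is the handling of the $\varepsilon$--cross term, which is precisely where the restriction $p>2$ enters through Young's inequality; the identification of the limit is also non-automatic because $\mathfrak{F}$ is only separately strictly convex, but this is enough to conclude $u=U$.
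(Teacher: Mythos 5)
Your argument is correct and follows the same route as the paper: test the minimality of $u^\varepsilon$ against the admissible competitor $U^\varepsilon$, absorb the $\varepsilon$--term via Young's inequality with exponents $p/2$ and $p/(p-2)$ (which is exactly where the exponent $\varepsilon^{p/(p-2)}$ comes from), extract a weak limit, and identify it with $U$ through lower semicontinuity, the local minimality of $U$, and strict convexity. The paper merely delegates the convergence step to a lemma of \cite{BBJ}, whose content is precisely the identification argument you spell out.
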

\begin{proof}
The estimate \eqref{uniformeg} is standard, it is sufficient to test the minimality of $u^\varepsilon$ against $U^\varepsilon$, which is admissible. In particular, the family $\{u^\varepsilon\}_{0<\varepsilon<\varepsilon_0}$ is uniformly bounded in $W^{1,p}(B)$.
Moreover, by \cite[Lemma 2.9]{BBJ}  there exists a sequence $\{\varepsilon_k\}_{k\in\mathbb{N}}\subset(0,\varepsilon_0)$ such that $u^{\varepsilon_k}$ converges weakly in $W^{1,p}(B)$ and strongly in $L^p(B)$ to a solution $w$ of 
\[
\min\left\{\sum_{i=1}^2\frac{1}{p}\,\int_{B}|\varphi_{x_i}|^p\,dx\, :\, \varphi-U\in W^{1,p}_0(B)\right\}.
\]
Since $U$ is a local minimizer of $\mathfrak{F}$ and the solution of this problem is unique (by strict convexity), we get $w=U$ and full convergence of the whole family.
\end{proof}
\begin{lm}[Uniform regularity estimates]
\label{lm:regularity}
For every $0<\varepsilon<\varepsilon_0$ and every $B_r\Subset B$ we have
\begin{equation}
\label{uniform}
\|u^{\varepsilon}\|_{L^\infty(B_r)}\le C, 
\end{equation}
\begin{equation}
\label{uniform1}
\|\nabla u^{\varepsilon}\|_{L^\infty(B_r)} \le C,
\end{equation}
and
\begin{equation}
\label{uniform2} 
\int_{B_r} \left|\nabla \left(|u^{\varepsilon}_{x_j}|^{\frac{p-2}{2}}\,u^{\varepsilon}_{x_j}\right)\right|^2\,dx \leq C,\qquad j=1,2,
\end{equation}
for some constant $C>0$ independent of $\varepsilon>0$.
\end{lm}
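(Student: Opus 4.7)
I would prove the three estimates in order, leaning on the minimization structure for (\ref{uniform}), citing the companion work \cite{BBJ} for (\ref{uniform1}), and using the differentiated Euler--Lagrange equation \eqref{Differentiation_Euler_equation_varepsilon} for (\ref{uniform2}).

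For the $L^\infty$ bound (\ref{uniform}), the plan is to use the weak maximum principle built into the minimization problem. Set $M_\varepsilon:=\|U^\varepsilon\|_{L^\infty(\partial B)}$ and define $v:=\max(\min(u^\varepsilon,M_\varepsilon),-M_\varepsilon)$; then $v$ is a competitor with $v-U^\varepsilon\in W^{1,p}_0(B)$, and since truncation only decreases the integrand $\sum_i |w_{x_i}|^p/p+(p-1)\varepsilon |\nabla w|^2/2$, strict convexity forces $v=u^\varepsilon$, i.e.\ $\|u^\varepsilon\|_{L^\infty(B)}\le M_\varepsilon$. Since the Main Theorem hypotheses via \cite{BBJ} give $U\in W^{1,\infty}_{\rm loc}(\Omega)$, we have $\|U\|_{L^\infty(\lambda_B B)}<\infty$, and hence $\sup_{0<\varepsilon<\varepsilon_0}\|U^\varepsilon\|_{L^\infty(B)}\le \|U\|_{L^\infty(\lambda_B B)}<\infty$, giving a uniform bound $C$.

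The uniform Lipschitz bound (\ref{uniform1}) is the key step, and I would not attempt a self-contained proof here: it is exactly the uniform-in-$\varepsilon$ gradient estimate already obtained in \cite{BBJ} for the same regularization \eqref{approximation}, which was the mechanism that gave $U$ itself a local Lipschitz estimate after passing to the limit. So (\ref{uniform1}) is a direct quotation from \cite{BBJ}. This is the main technical obstacle in that it is the deepest ingredient, but for the present lemma it may be treated as a black box.

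For the Sobolev estimate (\ref{uniform2}), the plan is a Caccioppoli inequality on the differentiated equation. Fix $j\in\{1,2\}$, let $\eta\in C^\infty_0(B)$ with $\eta\equiv 1$ on $B_r$, and insert $\varphi=\eta^2\,u^\varepsilon_{x_j}$ in \eqref{Differentiation_Euler_equation_varepsilon}; this gives
\[
\sum_{i=1}^2\int_B \bigl(|u^\varepsilon_{x_i}|^{p-2}+\varepsilon\bigr)\,|u^\varepsilon_{x_i x_j}|^2\,\eta^2\,dx
=-2\sum_{i=1}^2\int_B \bigl(|u^\varepsilon_{x_i}|^{p-2}+\varepsilon\bigr)\,u^\varepsilon_{x_i x_j}\,u^\varepsilon_{x_j}\,\eta\,\eta_{x_i}\,dx.
\]
Young's inequality absorbs the second-derivative factor on the right into the left, leaving
\[
\sum_{i=1}^2\int_B \bigl(|u^\varepsilon_{x_i}|^{p-2}+\varepsilon\bigr)\,|u^\varepsilon_{x_i x_j}|^2\,\eta^2\,dx
\le C\sum_{i=1}^2\int_B \bigl(|u^\varepsilon_{x_i}|^{p-2}+\varepsilon\bigr)\,|u^\varepsilon_{x_j}|^2\,|\eta_{x_i}|^2\,dx.
\]
Taking $i=j$ on the left controls $|u^\varepsilon_{x_j}|^{p-2}|\nabla u^\varepsilon_{x_j}|^2\,\eta^2$, which up to the constant $4/p^2$ is $|\nabla(|u^\varepsilon_{x_j}|^{(p-2)/2}u^\varepsilon_{x_j})|^2\,\eta^2$. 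The right-hand side is uniformly bounded thanks to (\ref{uniform1}) applied on $\operatorname{supp}\eta\Subset B$, and the conclusion (\ref{uniform2}) follows with a constant depending on $r$ but not on $\varepsilon$.
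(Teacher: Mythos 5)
Your overall strategy matches the paper's: the $L^\infty$ bound is treated as soft (the paper simply cites \cite[Chapter 7]{Gi}; your truncation/comparison argument is a fine elementary substitute), \eqref{uniform1} is quoted from \cite[Proposition 4.1]{BBJ} combined with the uniform energy estimate, and \eqref{uniform2} comes from the Caccioppoli inequality obtained by testing \eqref{Differentiation_Euler_equation_varepsilon} with $\eta^2\,u^\varepsilon_{x_j}$.

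There is, however, one genuine (though easily repaired) slip in the last step. For a fixed differentiation index $j$, the left-hand side of your Caccioppoli inequality is $\sum_i\int(|u^\varepsilon_{x_i}|^{p-2}+\varepsilon)\,|u^\varepsilon_{x_ix_j}|^2\,\eta^2\,dx$; its $i=j$ term is $\int|u^\varepsilon_{x_j}|^{p-2}\,|u^\varepsilon_{x_jx_j}|^2\,\eta^2\,dx$, which controls only the \emph{single} derivative $\bigl(|u^\varepsilon_{x_j}|^{(p-2)/2}u^\varepsilon_{x_j}\bigr)_{x_j}$, not the full gradient $\nabla\bigl(|u^\varepsilon_{x_j}|^{(p-2)/2}u^\varepsilon_{x_j}\bigr)$ as you claim. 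The mixed term $|u^\varepsilon_{x_j}|^{p-2}\,|u^\varepsilon_{x_jx_k}|^2$ with $k\ne j$ is produced by the Caccioppoli inequality for the equation differentiated in the direction $x_k$ (test \eqref{Differentiation_Euler_equation_varepsilon} with index $k$ against $\eta^2\,u^\varepsilon_{x_k}$), keeping the component $i=j$ of the resulting sum. So one must run the argument for both differentiation directions and add the two inequalities; this is exactly what the paper's estimate \eqref{sobolevoso} encodes (there, for fixed differentiation index $j$, the left-hand side is $\sum_i\bigl|\bigl(|u^\varepsilon_{x_i}|^{(p-2)/2}u^\varepsilon_{x_i}\bigr)_{x_j}\bigr|^2$, and the full gradient is recovered by summing over $j$ at the end). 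With that correction, your bound of the right-hand side via \eqref{uniform1} closes the proof; the paper instead bounds it by the $L^p$ energy estimate \eqref{uniformeg}, which is slightly more economical but equivalent for the present purpose on $\mathrm{spt}(\eta)\Subset B$.
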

\begin{proof}
The proof of the $L^\infty$ estimate \eqref{uniform} is standard, it can be obtained as in \cite[Chapter 7]{Gi}.
\vskip.2cm\noindent
The Lipschitz estimate \eqref{uniform1} is more delicate and is one of the main outcome of \cite{BBJ}. Indeed, we know from \cite[Proposition 4.1]{BBJ} that there exists $C=C(p)>0$ such that for every $B_r\Subset B_R\Subset B$
\begin{equation}
\label{8?!}
\Big\|u^\varepsilon_{x_i}\Big\|_{L^\infty(B_{r})}\le C\left(\frac{R}{R-r}\right)^8\,\left[\fint_{B_{R}} \left|\nabla u^\varepsilon\right|^p\, dx+1\right]^{2+\frac{1}{p}},\qquad i=1,2,
\end{equation}
With the notation introduced in \cite{BBJ}, this corresponds to the particular case $\delta_1=\delta_2=0$ and $f=0$ there. By combining this with \eqref{uniformeg}, we get \eqref{uniform1}.
\vskip.2cm\noindent
We now prove the $W^{1,2}$ estimate for the nonlinear function of $\nabla u_\varepsilon$. We take $\eta\in C^\infty_0(B)$ a standard cut-off function such that
\[
0\le \eta\le 1,\qquad\eta  \equiv 1 \textrm{ on } B_r, \qquad \eta \equiv 0 \mbox{ on } \mathbb{R}^2\setminus B_{R}, \qquad |\nabla \eta| \leq \frac{C}{R-r}.
\]
Then we test \eqref{Differentiation_Euler_equation_varepsilon} against $\varphi=u^\varepsilon_{x_j}\,\eta^2$. With standard manipulations, we get the Caccioppoli inequality
\[
\sum_{i=1}^2\int \Big(|u^\varepsilon_{x_i}|^{p-2}+\varepsilon\Big)\,\left|u^\varepsilon_{x_i x_j}\right|^2\,\eta^2  \,dx\le C\,\sum_{i=1}^2 \int \Big(|u^\varepsilon_{x_i}|^{p-2}+\varepsilon\Big)\,|u^\varepsilon_{x_j}|^2\,|\eta_{x_i}|^2\,dx.
\]
By dropping the term containing $\varepsilon$ on the left and observing that
\[
|u^{\varepsilon}_{x_i}|^{p-2}\,\left|u^{\varepsilon}_{x_i x_j}\right|^2=\frac{4}{p^2}\,\left|\left(|u^\varepsilon_{x_i}|^\frac{p-2}{2}\,u^\varepsilon_{x_i}\right)_{x_j}\right|^2,
\]
we get
\begin{equation}
\label{sobolevoso}
\sum_{i=1}^2\int_{B_r} \left|\left(|u^\varepsilon_{x_i}|^\frac{p-2}{2}\,u^\varepsilon_{x_i}\right)_{x_j}\right|^2\,dx\le \frac{C}{(R-r)^2}\,\sum_{i=1}^2 \int_{B_R} \Big(|u^\varepsilon_{x_i}|^{p-2}+\varepsilon\Big)\,|u^\varepsilon_{x_j}|^2\,dx,
\end{equation}
where we used the properties of $\eta$. In order to conclude, it is sufficient to use again \eqref{uniformeg}.
\end{proof}

From  the bounds obtained in Lemma \ref{lm:regularity}, we can deduce the following convergence result. 
\begin{prop}[Convergence]
\label{lm_convergence_ueps}
With the notation above,  
for every $B_r\Subset B$ we have:
\vskip.2cm
\begin{enumerate}
\item[{\it i)}] \(\{u^{\varepsilon}\}_{0<\varepsilon<\varepsilon_0}\) converges uniformly to \(U\) on \(\overline{B_r}\);
\vskip.2cm
\item[{\it ii)}] $\left\{|u^{\varepsilon}_{x_i}|^{\frac{p-2}{2}}\,u^{\varepsilon}_{x_i}\right\}_{0<\varepsilon<\varepsilon_0}$ converges to \(|U_{x_i}|^{\frac{p-2}{2}}\,U_{x_i}\) weakly in $W^{1,2}(B_r)$ and strongly in $L^2(B_r)$. In particular, we have
\[
|U_{x_i}|^{\frac{p-2}{2}}\,U_{x_i}\in W^{1,2}(B_r); 
\]
\vskip.2cm
\item[{\it iii)}] \(\{\nabla u^{\varepsilon}\}_{0<\varepsilon<\varepsilon_0}\) converges to \(\nabla U\) strongly in $L^p(B_r)$.
\vskip.2cm
\end{enumerate}
\end{prop}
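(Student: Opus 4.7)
My plan is to establish (iii) first on the whole ball $B$, then derive (i) and (ii) as consequences of compactness plus the almost-everywhere convergence extracted from (iii).

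For (iii), I would test the minimality of $u^\varepsilon$ in \eqref{approximation} against the admissible competitor $U^\varepsilon$. Together with the strong convergence $U^\varepsilon\to U$ in $W^{1,p}(B)$ coming from standard mollification theory, and the fact that $\varepsilon\int_B|\nabla U^\varepsilon|^2\,dx\to 0$ (which holds because $U\in W^{1,p}_{\rm loc}\subset W^{1,2}_{\rm loc}$ for $p\ge 2$ on the bounded set $B$), this yields the energy upper bound
\[
\limsup_{\varepsilon\to 0}\sum_{i=1}^2\int_B |u^\varepsilon_{x_i}|^p\,dx\le \sum_{i=1}^2\int_B|U_{x_i}|^p\,dx.
\]
Applying the weak lower semicontinuity of $\varphi\mapsto \int_B|\varphi_{x_i}|^p\,dx$ separately for $i=1,2$ (which is legitimate since we already know from Lemma~\ref{lm:energy} that $u^\varepsilon_{x_i}\rightharpoonup U_{x_i}$ in $L^p(B)$), I would conclude that $\|u^\varepsilon_{x_i}\|_{L^p(B)}\to \|U_{x_i}\|_{L^p(B)}$ for each $i$. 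Weak convergence together with norm convergence in the uniformly convex space $L^p(B)$ then upgrades to strong $L^p$ convergence via the Radon--Riesz property, which proves (iii) on $B$ and hence on $B_r$. Passing to a subsequence, one also obtains $\nabla u^\varepsilon\to\nabla U$ almost everywhere in $B$.

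For (i), the uniform Lipschitz estimate \eqref{uniform1} combined with the uniform $L^\infty$ bound \eqref{uniform} makes $\{u^\varepsilon\}$ equi-bounded and equi-continuous on $\overline{B_r}$. Ascoli--Arzel\`a together with the $L^p$ convergence from Lemma~\ref{lm:energy} then identifies the limit of every uniformly convergent subsequence as $U$, so the full family converges uniformly. For (ii), setting $v_i^\varepsilon:=|u^\varepsilon_{x_i}|^{(p-2)/2}\,u^\varepsilon_{x_i}$, estimates \eqref{uniform1} and \eqref{uniform2} yield boundedness of $\{v_i^\varepsilon\}$ in $W^{1,2}(B_r)$. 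Rellich--Kondrachov produces a subsequential limit $w_i$, weakly in $W^{1,2}(B_r)$ and strongly in $L^2(B_r)$; the a.e.\ convergence of $\nabla u^\varepsilon$ coming from (iii), together with the continuity of $t\mapsto |t|^{(p-2)/2}t$, identifies $w_i=|U_{x_i}|^{(p-2)/2}U_{x_i}$, and a standard subsequence argument promotes the convergence to the full family.

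The most delicate point is the energy upper bound used for (iii): one has to check carefully that the penalization term $\varepsilon\int_B|\nabla U^\varepsilon|^2\,dx$ really vanishes in the limit and that the mollified energies converge to the right quantity on the fixed ball $B$ (not just on a slightly smaller ball). Once (iii) is secured, (i) and (ii) are essentially routine consequences of the uniform bounds already provided by Lemma~\ref{lm:regularity} and of the identification of limits via a.e.\ convergence.
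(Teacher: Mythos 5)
Your argument is correct, but the route to part \emph{iii)} is genuinely different from the one in the paper, and the logical order of \emph{ii)} and \emph{iii)} is reversed. The paper proves \emph{iii)} as a \emph{consequence} of the $W^{1,2}$ compactness underlying \emph{ii)}: the bound \eqref{uniform2} yields a subsequence for which $|u^{\varepsilon_k}_{x_i}|^{(p-2)/2}u^{\varepsilon_k}_{x_i}$ is Cauchy in $L^2(B_r)$, and the elementary inequality $|t-s|^p\le C\,\bigl||t|^{(p-2)/2}t-|s|^{(p-2)/2}s\bigr|^2$ then converts this into the Cauchy property of $\{u^{\varepsilon_k}_{x_i}\}$ in $L^p(B_r)$, giving strong gradient convergence with no energy comparison at all; the limit $V_i$ is afterwards identified through the reverse-type inequality and H\"older rather than through a.e.\ convergence. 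You instead obtain \emph{iii)} by the classical recovery-sequence argument: minimality in \eqref{approximation} tested against $U^\varepsilon$, convergence of the mollified energies (which is indeed guaranteed on the fixed ball $B$ by \eqref{Uepsilon} and the fact that $\lambda_B\,B\Subset\Omega$), vanishing of the penalization term (valid since $p>2$ gives $\nabla U\in L^2(\lambda_B\,B)$), separate lower semicontinuity of the two components combined with the summed upper bound to get componentwise norm convergence, and finally Radon--Riesz. All of these steps are sound, the weak convergence $u^\varepsilon_{x_i}\rightharpoonup U_{x_i}$ needed for them being supplied by Lemma \ref{lm:energy}. Your approach buys strong convergence on the whole ball $B$ rather than only on $B_r\Subset B$, and it makes \emph{iii)} independent of the delicate second-derivative estimate \eqref{uniform2}; the paper's approach avoids any discussion of the convergence of the approximate energies and of the penalization, and packages the identification of the limit and the strong convergence into two elementary pointwise inequalities. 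Your subsequent derivations of \emph{i)} (Ascoli--Arzel\`a from \eqref{uniform} and \eqref{uniform1}) and of \emph{ii)} (Rellich--Kondra\v{s}ov plus a.e.\ identification and the subsequence trick) are essentially the paper's, up to the different identification of the limit function.
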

\begin{proof}
We already know from Lemma \ref{lm:energy} that $u^\varepsilon$ converges to $U$ weakly in $W^{1,p}(B)$ and strongly in $L^p(B)$.
\par
In view of  \eqref{uniform} and \eqref{uniform1}, the Arzel\`a-Ascoli Theorem implies that the convergence is indeed uniform on $\overline B_r$, for every $B_r\Subset B$.
\par
By \eqref{uniform2}, there exists a sequence \(\{\varepsilon_k\}_{k\in \mathbb{N}}\subset (0, \varepsilon_0)\) such that
\[
\left\{|u^{\varepsilon_k}_{x_i}|^{\frac{p-2}{2}}\,u^{\varepsilon_k}_{x_i}\right\}_{k\in \mathbb{N}},\qquad i=1,2,
\]
converges to some function \(V_i\in W^{1,2}(B_r)\), weakly in \(W^{1,2}(B_r)\) and strongly in \(L^{2}(B_r)\). In particular, this is a Cauchy sequence in $L^2(B_r)$. By using the elementary inequality
\[
|t-s|^p\le C\,\Big||t|^\frac{p-2}{2}\,t-|s|^\frac{p-2}{2}\,s\Big|^2,\qquad t,s\in\mathbb{R},
\]
where \(C>0\) depends only on \(p\), 
we obtain that $\{u^{\varepsilon_k}_{x_i}\}_{k\in \mathbb{N}}$  is a Cauchy sequence as well, this time in $L^p(B_r)$. This implies that 
\[
\lim_{k\to +\infty}\left\|\nabla u^{\varepsilon_k}-\nabla U\right\|_{L^p(B_r)}=0.
\]
We now prove that $V_i=|U_{x_i}|^{(p-2)/2}\,U_{x_i}$. We use the elementary inequality
\[
\Big||t|^\frac{p-2}{2}\,t-|s|^\frac{p-2}{2}\,s\Big|\le C\, \left(|t|^\frac{p-2}{2}+|s|^\frac{p-2}{2}\right) |t-s|,\qquad t,s\in\mathbb{R},
\]
valid for some $C=C(p)>0$. Then we obtain
\[
\begin{split}
\int_{B_r} \left||u^{\varepsilon_k}_{x_i}|^{\frac{p-2}{2}}\,u^{\varepsilon_k}_{x_i}-|U_{x_i}|^{\frac{p-2}{2}}\,U_{x_i}\right|^2\,dx&\le C\, \int_{B_r} \left(|u^{\varepsilon_k}_{x_i}|^{\frac{p-2}{2}}+|U_{x_i}|^{\frac{p-2}{2}}\right)^2\,|u^{\varepsilon_k}_{x_i}-U_{x_i}|^2\,dx\\
&\le C\,\left( \int_{B_r} \left(|u^{\varepsilon_k}_{x_i}|^{\frac{p-2}{2}}+|U_{x_i}|^{\frac{p-2}{2}}\right)^\frac{2\,p}{p-2}\,dx\right)^\frac{p-2}{p}\\
&\times \left(\int_{B_r} |u^{\varepsilon_k}_{x_i}-U_{x_i}|^p\,dx\right)^\frac{2}{p}.
\end{split}
\]
By using the strong convergence of the gradients proved above, this implies that \(V_i=|U_{x_i}|^{(p-2)/2}\,U_{x_i}\). Since the above argument can be repeated for every subsequence of \(\{u^\varepsilon\}_{0<\varepsilon<\varepsilon_0}\), it follows from the uniqueness of the limit that the convergence holds true for the whole family \(\{u^\varepsilon\}_{0<\varepsilon<\varepsilon_0}\), both in {\it ii)} and {\it iii)}. The proof is complete.
\end{proof}

From the convergence results stated in the above proposition, we can obtain some regularity properties for the local minimizer $U$ that we state in the following theroem. These properties, which come with local scaling invariant a priori estimates, have already been established in \cite{BBJ}, \cite{BC} and \cite{FF}. 
\begin{teo}[A priori estimates, $p>2$]
\label{teo:apriori}
Every local minimizer $U\in W^{1,p}_{\rm loc}(\Omega)$ of the functional $\mathfrak{F}$ is a locally Lipschitz function, such that for every $\alpha\ge p/2$ we have
\[
|U_{x_i}|^{\alpha-1}\,U_{x_i}\in W^{1,2}_{\rm loc}(\Omega),\qquad i=1,2.
\]  
Moreover, for every $B_R\Subset\Omega$ we have
\begin{equation}
\label{apriori}
\|U_{x_i}\|_{L^\infty(B_{R/2})}\le C_1\,\left(\fint_{B_R} |\nabla U|^p\,dx\right)^\frac{1}{p},\qquad i=1,2,
\end{equation}
\begin{equation}
\label{apriorialpha}
\int_{B_{R/2}} \left|\nabla \left(|U_{x_i}|^{\alpha-1}\,U_{x_i}\right)\right|^2\,dx\le C_2\left(\fint_{B_R} |\nabla U|^p\,dx\right)^\frac{2\,\alpha}{p},\qquad i=1,2,
\end{equation}
for some $C_1=C_1(p)>0$ and $C_2=C_2(p,\alpha)>0$.
\end{teo}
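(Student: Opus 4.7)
The plan is to obtain each estimate in scale-invariant form at the level of the regularized solutions $u^\varepsilon$ and then pass to the limit $\varepsilon\to 0$ by invoking Proposition \ref{lm_convergence_ueps}.

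For the Lipschitz estimate \eqref{apriori}, I would apply the Moser-type bound \eqref{8?!} to $u^\varepsilon$ with $r=R/2$, and combine it with the uniform energy estimate \eqref{uniformeg}. The ``$+1$'' on the right-hand side of \eqref{8?!} is removed via a standard normalization argument exploiting the $p$-homogeneity of equation \eqref{eq-Euler-classic}: after rescaling $U$ by a positive constant chosen so that $\fint_{B_R}|\nabla U|^p\,dx=1$, one runs the whole regularization procedure on the rescaled minimizer, obtains a dimensionless bound, and then scales back. Passing to the limit is then routine: the a.e.\ convergence of gradients from part (iii) of Proposition \ref{lm_convergence_ueps}, together with the uniform $L^\infty$ bound on $u^\varepsilon_{x_i}$, transfers the estimate from $u^\varepsilon$ to $U$.

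For \eqref{apriorialpha} in the base case $\alpha=p/2$, I would start from the Caccioppoli-type inequality \eqref{sobolevoso} obtained in the proof of Lemma \ref{lm:regularity}. Substituting the uniform Lipschitz bound \eqref{uniform1} into its right-hand side, together with the same scaling normalization as above, yields the desired scale-invariant estimate at the level of $u^\varepsilon$. Passing to the limit uses the weak $W^{1,2}$ convergence from Proposition \ref{lm_convergence_ueps}~(ii) and the weak lower semicontinuity of the Dirichlet integral.

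The case $\alpha>p/2$ is then a purely algebraic consequence of the case $\alpha=p/2$ and the Lipschitz bound. Setting $V_i:=|U_{x_i}|^{(p-2)/2}\,U_{x_i}$, one has the identity $|U_{x_i}|^{\alpha-1}\,U_{x_i}=g_{(2\alpha-p)/p}(V_i)$, where $V_i\in W^{1,2}_{\rm loc}$ by the previous step and is bounded because $U$ is Lipschitz. For $\alpha\ge p/2$ the function $g_{(2\alpha-p)/p}$ is $C^1$ on $\mathbb{R}$ with locally bounded derivative, so the chain rule gives the pointwise identity
\[
\left|\nabla\bigl(|U_{x_i}|^{\alpha-1}\,U_{x_i}\bigr)\right|^2=(2\alpha/p)^2\,|U_{x_i}|^{2\alpha-p}\,|\nabla V_i|^2,
\]
and inserting \eqref{apriori} together with the previous step yields \eqref{apriorialpha}. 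The main subtlety throughout is the scaling normalization needed to convert the estimates of \cite{BBJ} (which carry an additive ``$+1$'' and the unnatural exponent $2+1/p$) into bounds of the precise homogeneous form required by the statement.
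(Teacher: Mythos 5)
Your proposal is correct and follows essentially the same route as the paper: limit passage in \eqref{8?!} and \eqref{sobolevoso} via Proposition \ref{lm_convergence_ueps}, removal of the additive constant and the exponent $2+1/p$ by exploiting the invariance of the minimization under $U\mapsto\lambda U$, and reduction of the case $\alpha>p/2$ to $\alpha=p/2$ through the $C^1$ composition $|U_{x_i}|^{\alpha-1}U_{x_i}=g_{(2\alpha-p)/p}(|U_{x_i}|^{(p-2)/2}U_{x_i})$, which is exactly the paper's identity \eqref{bellepoque}. The only cosmetic difference is that you fix the normalizing $\lambda$ so that $\fint_{B_R}|\nabla U|^p\,dx=1$ (treating the trivial case of vanishing energy separately), whereas the paper optimizes the resulting inequality over all $\lambda>0$; the two devices are equivalent.
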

\begin{proof}
Let us prove the estimates \eqref{apriori} and \eqref{apriorialpha}. By taking the limit as $\varepsilon$ goes to $0$ in \eqref{8?!} and using the convergence result of Proposition \ref{lm_convergence_ueps}, we obtain
\[
\Big\|U_{x_i}\Big\|_{L^\infty(B_{R/2})}\le C\,\left[\fint_{B_{R}} \left|\nabla U\right|^p\, dx+1\right]^{2+\frac{1}{p}},\qquad i=1,2.
\]
In order to obtain \eqref{apriori}, it is sufficient to observe that if $U$ is a local minimizer of $\mathfrak{F}$, then for every $\lambda>0$ the function $\lambda\,U$ is still a local minimizer of the same functional. Thus the previous Lipschitz estimate holds true, i.e.
\[
\lambda\,\Big\|U_{x_i}\Big\|_{L^\infty(B_{R/2})}\le C\,\left[\lambda^p\,\fint_{B_{R}} \left|\nabla U\right|^p\, dx+1\right]^{2+\frac{1}{p}},\qquad i=1,2.
\]
This can be rewritten as
\[
\lambda^\frac{p}{2\,p+1}\,\Big\|U_{x_i}\Big\|_{L^\infty(B_{R/2})}^\frac{p}{2\,p+1}-C\,\lambda^p\,\fint_{B_{R}} \left|\nabla U\right|^p\, dx\le C,\qquad i=1,2,
\]
for a different constant $C=C(p)>0$. If we now maximize the left-hand side with respect to $\lambda>0$, we get \eqref{apriori} as desired.
\vskip.2cm\noindent
We already know from Proposition \ref{lm_convergence_ueps} that $|U_{x_i}|^{(p-2)/2}\,U_{x_i} \in W^{1,2}_{\rm loc}(\Omega)$. By passing to the limit in \eqref{sobolevoso} and using the convergences at our disposal from Proposition \ref{lm_convergence_ueps}, we obtain
\[
\int_{B_{R/2}} \left|\nabla \left(|U_{x_i}|^\frac{p-2}{2}\,U_{x_i}\right)\right|^2\,dx\le \frac{C}{R^2}\,\int_{B_R} |\nabla U|^p\,dx,
\]
which is \eqref{apriorialpha} for $\alpha=p/2$.
In order to prove \eqref{apriorialpha} for a general $\alpha>p/2$, it is sufficient to observe that 
\begin{equation}
\label{bellepoque}
|U_{x_i}|^{\alpha-1}\,U_{x_i}=\left||U_{x_i}|^\frac{p-2}{2}\,U_{x_i}\right|^{\frac{2}{p}\,\alpha-1}\,|U_{x_i}|^\frac{p-2}{2}\,U_{x_i},
\end{equation}
and the function $t\mapsto |t|^{(2\,\alpha-p)/p}\,t$ is $C^1$. By using that 
\[
|U_{x_i}|^\frac{p-2}{2}\,U_{x_i}\in W^{1,2}_{\rm loc}(\Omega)\cap L^\infty_{\rm loc}(\Omega),
\] 
we get that $|U_{x_i}|^{\alpha-1}\,U_{x_i}\in W^{1,2}_{\rm loc}(\Omega)\cap L^\infty_{\rm loc}(\Omega)$ as well. Finally, to prove the estimate, we observe that  \eqref{bellepoque} implies
\[
\int_{B_{R/2}} \left|\nabla \left(|U_{x_i}|^{\alpha-1}\,U_{x_i}\right)\right|^2\,dx\le C\,\|U_{x_i}\|^{2\,\alpha-p}_{L^\infty(B_{R/2})} \int_{B_{R/2}} \left|\nabla \left(|U_{x_i}|^\frac{p-2}{2}\,U_{x_i}\right)\right|^2\,dx.
\]
By using \eqref{apriori} and \eqref{apriorialpha} for $\alpha=p/2$, we get the desired conclusion.
\end{proof}

We proceed with a technical result which will be needed to handle the case $p>2$.
\begin{lm}
\label{lm:schwarz2}
Let $p>2$ and let $U\in W^{1,p}_{\rm loc}(\Omega)$ still denote a local minimizer of $\mathfrak{F}$. Let $\beta\in\mathbb{R}$ and set
\[
F(t)=\frac{p}{2}\,\int_\beta^t |s|^\frac{p-2}{2}\,(s-\beta)_+\,ds,\qquad t\in\mathbb{R}.
\]
Then $F(U_{x_j})\in W^{1,2}_{\rm loc}(\Omega)$ and we have
\begin{equation}
\label{dellafrancesca}
\left(|U_{x_j}|^\frac{p-2}{2}\,U_{x_j}\right)_{x_k}\,(U_{x_j}-\beta)_+=\left(F(U_{x_j})\right)_{x_k}, \quad \mbox{ almost everywhere in }\Omega.
\end{equation}
\end{lm}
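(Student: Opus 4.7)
The plan is to reduce the claim to a chain-rule statement for the already well-behaved quantity $V_j := g_{(p-2)/2}(U_{x_j}) = |U_{x_j}|^{(p-2)/2}\,U_{x_j}$. By Theorem \ref{teo:apriori} we have $V_j \in W^{1,2}_{\rm loc}(\Omega)$, and from the Lipschitz regularity of $U$ we also get $V_j \in L^\infty_{\rm loc}(\Omega)$. Since $g_{(p-2)/2}$ is a homeomorphism with inverse $h := g_{-(p-2)/p}$, we can recover $U_{x_j} = h(V_j)$, and rewrite
\[
F(U_{x_j}) \;=\; F(h(V_j)) \;=:\; \tilde F(V_j),
\]
thereby transferring the whole question to the scalar function $\tilde F := F \circ h$.

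Next I would compute $\tilde F'$ away from the origin. Using $F'(t) = \frac{p}{2}|t|^{(p-2)/2}(t-\beta)_+$ together with the elementary identities $h'(s) = \tfrac{2}{p}|s|^{-(p-2)/p}$ and $|h(s)|^{(p-2)/2} = |s|^{(p-2)/p}$, the chain rule gives, for $s\neq 0$, the clean simplification
\[
\tilde F'(s) \;=\; F'(h(s))\,h'(s) \;=\; (h(s)-\beta)_+ .
\]
Since $h$ is continuous with $h(0)=0$, the right-hand side extends continuously to $s=0$ and is bounded on every bounded interval. Together with the continuity of $\tilde F$, this is enough to conclude that $\tilde F$ is locally Lipschitz on $\mathbb{R}$.

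Then I would invoke the standard chain rule for the composition of a locally Lipschitz function with a Sobolev function (see e.g.\ Marcus--Mizel or Ziemer): since $V_j \in W^{1,2}_{\rm loc}(\Omega)$ is locally bounded and $\tilde F$ is locally Lipschitz, we obtain $\tilde F(V_j) \in W^{1,2}_{\rm loc}(\Omega)$ with
\[
(\tilde F(V_j))_{x_k} \;=\; \tilde F'(V_j)\,(V_j)_{x_k} \;=\; (h(V_j)-\beta)_+\,\left(|U_{x_j}|^\frac{p-2}{2}\,U_{x_j}\right)_{x_k} \;=\; (U_{x_j}-\beta)_+\,\left(|U_{x_j}|^\frac{p-2}{2}\,U_{x_j}\right)_{x_k}
\]
almost everywhere in $\Omega$, which is precisely \eqref{dellafrancesca}.

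The only delicate point is the non-smoothness of $h$ at the origin: since $h(s) = |s|^{2/p}\,\mathrm{sgn}(s)$ and $p>2$, one has $h'(s)\to +\infty$ as $s\to 0$, so on the set $\{U_{x_j}=0\}$ one might worry that the chain rule breaks down. The saving observation, made manifest by the explicit computation, is the built-in cancellation $F'(h(s))\,h'(s) = (h(s)-\beta)_+$, which is continuous (and bounded) through the origin. Once this is in hand, everything else is a routine application of the Sobolev chain rule, so I expect this cancellation identity to be the heart of the proof.
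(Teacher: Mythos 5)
Your argument is correct, and for the identity \eqref{dellafrancesca} it takes a genuinely different route from the paper. The first half coincides: your $\tilde F=F\circ h$ is exactly the function $G$ that the paper introduces, and both proofs establish $F(U_{x_j})=G(V_j)\in W^{1,2}_{\rm loc}$ by observing that the substitution $\tau=|s|^{(p-2)/2}s$ turns $G'$ into the continuous function $s\mapsto (h(s)-\beta)_+$ (so $G$ is in fact $C^1$, not merely locally Lipschitz, which is the cancellation you correctly identify as the heart of the matter: the singularity of $h'$ at the origin is absorbed by the factor $|h(s)|^{(p-2)/2}$ in $F'$). Where you diverge is in deriving \eqref{dellafrancesca} itself: you apply the Sobolev chain rule directly to the composition of the $C^1$ function $G$ (with locally bounded derivative) and $V_j\in W^{1,2}_{\rm loc}\cap L^\infty_{\rm loc}$, obtaining $(G(V_j))_{x_k}=G'(V_j)\,(V_j)_{x_k}=(U_{x_j}-\beta)_+\,(V_j)_{x_k}$ in one stroke. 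The paper instead writes the identity \eqref{eq1430} for the smooth approximations $u^\varepsilon$ and passes to the limit, using the weak $W^{1,2}$ and strong $L^p$ convergences of Proposition \ref{lm_convergence_ueps} on each side. Your route is shorter and avoids the regularization entirely for this lemma, at the price of invoking the chain rule for $C^1\circ W^{1,2}$ compositions as a black box; the paper's route is longer but stays within the approximation framework it uses everywhere else. Both are sound.
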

\begin{proof}
In order to prove that $F(U_{x_j})\in W^{1,2}_{\rm loc}(\Omega)$, we can observe that if we introduce the function
\[
G(t)=F\left(|t|^\frac{2-p}{p}\,t\right)=\frac{p}{2}\,\int_\beta^{|t|^\frac{2-p}{p}\,t} |s|^\frac{p-2}{2}\,(s-\beta)_+\,ds,
\]
then we have
\begin{equation}
\label{composition}
F(U_{x_j})=G\left(|U_{x_j}|^\frac{p-2}{2}\,U_{x_j}\right).
\end{equation}
With the simple change of variable $\tau=|s|^{(p-2)/2}\,s$, the function $G$ can be rewritten as
\[
G(t)=\int_{|\beta|^\frac{p-2}{2}\,\beta}^{t} \,\left(|\tau|^\frac{2-p}{p}\,\tau-\beta\right)_+\,d\tau.
\]
Hence, $G$ is a $C^1$  function. By using Theorem \ref{teo:apriori} and \eqref{composition}, we thus get that $F(U_{x_j})\in W^{1,2}_{\rm loc}(\Omega)$.
\par 
In order to prove \eqref{dellafrancesca}, we use the approximation scheme introduced in this section.
For every \(\varepsilon >0\), thanks to the smoothness of $u^\varepsilon$, we have
\begin{equation}
\label{eq1430}
\left(|u^\varepsilon_{x_j}|^\frac{p-2}{2}\,u^\varepsilon_{x_j}\right)_{x_k}\,(u^{\varepsilon}_{x_j}-\beta)_+ = \left(F(u_{x_j}^{\varepsilon})\right)_{x_k}.
\end{equation}
By Proposition \ref{lm_convergence_ueps}, we know that \(\nabla u^{\varepsilon}\) converges to \(\nabla U\) strongly in $L^p(B_r)$ and
\[
|u^{\varepsilon}_{x_j}|^{\frac{p-2}{2}}\,u^{\varepsilon}_{x_j}\ \mbox{ weakly converges in $W^{1,2}(B_r)$ to }\ |U_{x_j}|^{\frac{p-2}{2}}\,U_{x_j}.
\]
This implies that the left-hand side of \eqref{eq1430} converges weakly in $L^1(B_r)$ to the left-hand side of \eqref{dellafrancesca}. 
\par 
By using the uniform bounds of Lemma \ref{lm:regularity}, the local Lipschitz character of $G$ and the relation \eqref{composition}, we get
\[
\int_{B_r} |\nabla F(u^\varepsilon_{x_j})|^2\,dx=\int_{B_r} \left|\nabla G\left(|u^\varepsilon_{x_j}|^\frac{p-2}{2}\,u^\varepsilon_{x_j}\right)\right|^2\,dx\le C\, \int_{B_r} \left|\nabla \left(|u^\varepsilon_{x_j}|^\frac{p-2}{2}\,u^\varepsilon_{x_j}\right)\right|^2\,dx\le C,
\]
and
\[
\begin{split}
\lim_{\varepsilon\to 0}\int_{B_r} \left|F(U_{x_j})-F(u^\varepsilon_{x_j})\right|^2\,dx&=\lim_{\varepsilon\to 0}\int_{B_r} \left|G\left(|U_{x_j}|^\frac{p-2}{2}\,U_{x_j}\right)-G\left(|u^\varepsilon_{x_j}|^\frac{p-2}{2}\,u^\varepsilon_{x_j}\right)\right|^2\,dx\\ 
&\le C\, \lim_{\varepsilon\to 0}\int_{B_r} \left||U_{x_j}|^\frac{p-2}{2}\,U_{x_j}-|u^\varepsilon_{x_j}|^\frac{p-2}{2}\,u^\varepsilon_{x_j}\right|^2\,dx=0,
\end{split}
\]
where we used Proposition \ref{lm_convergence_ueps} for the last limit. We thus obtain that $F(u^\varepsilon_{x_j})$ converges weakly in $W^{1,2}(B_r)$ and strongly in $L^2(B_r)$ to $F(U_{x_j})$. We can then pass to the limit in the right-hand side of \eqref{eq1430}. 
\end{proof}

We end this subsection with two results on the solutions \(u^\varepsilon\) of the problem \eqref{approximation}. The first one is a standard minimum principle.

\begin{lm}[A minimum principle, $p>2$]
\label{lm:minimum}
With the notation above, let $B_r\Subset B$.
We have
\[
|u_{x_j}^\varepsilon|^\frac{p-2}{2}\,u_{x_j}^\varepsilon\ge C,\ \mbox{ on }\partial B_r\qquad \Longrightarrow\qquad |u_{x_j}^\varepsilon|^\frac{p-2}{2}\,u_{x_j}^\varepsilon\ge C,\ \mbox{ in } B_r.
\]
\end{lm}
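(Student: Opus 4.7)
My plan is to reduce the statement to the textbook weak minimum principle for a linear uniformly elliptic equation. The key preliminary observation is that the map $g(t) := |t|^{(p-2)/2}\,t$ is a continuous strictly increasing bijection of $\mathbb{R}$ (by the discussion of $g_q$ in the preliminaries, with $q = (p-2)/2 > -1$). Consequently, the statement $g(u^\varepsilon_{x_j}) \geq C$ is equivalent to $u^\varepsilon_{x_j} \geq g^{-1}(C)$. Setting $c := g^{-1}(C)$, the lemma therefore reduces to proving that
\[
u^\varepsilon_{x_j} \geq c \text{ on } \partial B_r \quad \Longrightarrow \quad u^\varepsilon_{x_j} \geq c \text{ in } B_r.
\]

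To prove this, I would fix $j\in\{1,2\}$, set $v := u^\varepsilon_{x_j}$, and invoke the differentiated Euler--Lagrange equation \eqref{Differentiation_Euler_equation_varepsilon}. It states exactly that $v$ is a classical (and weak) solution of the linear equation
\[
\sum_{i=1}^2 \partial_{x_i}\bigl(a_i(x)\,v_{x_i}\bigr) = 0 \text{ in } B, \qquad a_i(x) := |u^\varepsilon_{x_i}(x)|^{p-2}+\varepsilon.
\]
Since $u^\varepsilon$ is smooth on $\overline{B}$ and, crucially, $\varepsilon > 0$, the coefficients $a_i$ are smooth and satisfy $a_i \geq \varepsilon$ on all of $\overline{B_r}$. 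Thus the operator is uniformly elliptic with smooth coefficients and no zero-order term, which is the setting where the weak minimum principle applies verbatim.

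I would then conclude by the standard test-function argument. The function $\varphi := (c - v)_+$ lies in $W^{1,2}_0(B_r)$ because $v \geq c$ on $\partial B_r$, and its extension by zero to $B$ is an admissible test function for the weak form of the linearized equation (as recalled in the paper just after \eqref{Differentiation_Euler_equation_varepsilon}). Testing yields
\[
0 = \sum_{i=1}^2 \int_B a_i(x)\,v_{x_i}\,\varphi_{x_i}\,dx = -\int_{B_r\cap\{v<c\}} \sum_{i=1}^2 a_i(x)\,|v_{x_i}|^2\,dx,
\]
and the uniform bound $a_i \geq \varepsilon > 0$ forces $\nabla v = 0$ almost everywhere on $\{v<c\}\cap B_r$, hence $\nabla \varphi \equiv 0$, hence $\varphi \equiv 0$ in $W^{1,2}_0(B_r)$. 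This gives $v \geq c$ in $B_r$, as wanted, and applying $g$ recovers the statement.

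There is essentially no obstacle at this level: the lemma is easy precisely because the regularization parameter $\varepsilon > 0$ turns the linearized equation into a uniformly elliptic one, sidestepping the degeneracy and singularity of the original equation. The \emph{actual} difficulty of the paper is deferred to the subsequent sections, where one must extract information that survives the limit $\varepsilon \to 0$; the role of this lemma is to be a clean building block available at the regularized level.
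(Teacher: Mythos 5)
Your proof is correct, but it is not the route the paper takes, and the difference is instructive. The paper tests the linearized equation \eqref{Differentiation_Euler_equation_varepsilon} directly with the \emph{nonlinear} truncation $\bigl(C-|u^\varepsilon_{x_j}|^{\frac{p-2}{2}}u^\varepsilon_{x_j}\bigr)_+$; differentiating this test function produces the degenerate weight $|u^\varepsilon_{x_j}|^{\frac{p-2}{2}}$ in every term of the resulting identity, so even the $\varepsilon$-part of the coefficient cannot give control where $u^\varepsilon_{x_j}=0$. The authors are therefore forced into a more delicate conclusion: they keep only the $i=j$ term, recognize it as $|\partial_{x_j}\min\{|u^\varepsilon_{x_j}|^{\frac34(p-2)}u^\varepsilon_{x_j},\cdot\}|^2$, deduce that this truncated quantity is independent of $x_j$ in $B_r$, and combine that with its being constant on $\partial B_r$. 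You instead invert $g(t)=|t|^{\frac{p-2}{2}}t$ \emph{before} testing, converting the threshold $C$ into $c=g^{-1}(C)=|C|^{\frac{2-p}{p}}C$ (exactly the paper's own equivalence \eqref{1}), and then use the plain linear truncation $(c-u^\varepsilon_{x_j})_+$; this keeps the full coefficient $|u^\varepsilon_{x_i}|^{p-2}+\varepsilon\ge\varepsilon$ in play and reduces the lemma to the textbook weak minimum principle for a uniformly elliptic operator. Your argument is shorter and arguably cleaner; the only cosmetic slip is the claim that the coefficients are ``smooth'' (for $2<p<3$ the map $t\mapsto|t|^{p-2}$ is merely H\"older continuous, so $a_i$ need not be smooth), but this is irrelevant since the test-function argument only uses that the $a_i$ are bounded, measurable and bounded below by $\varepsilon$.
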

\begin{proof}
In the differentiated equation \eqref{Differentiation_Euler_equation_varepsilon} we insert the test function
\[
\Phi=\left\{\begin{array}{cl}
\left(C-|u^\varepsilon_{x_j}|^\frac{p-2}{2}\,u^\varepsilon_{x_j}\right)_+&, \mbox{ in } B_r,\\
0&, \mbox{ in } B\setminus B_r,
\end{array}
\right.
\]
which is admissible thanks to the hypothesis. Observe that 
\begin{equation}
\label{1}
|u^\varepsilon_{x_j}|^\frac{p-2}{2}\,u^\varepsilon_{x_j}\le C\quad \Longleftrightarrow \quad u^\varepsilon_{x_j}\le |C|^\frac{2-p}{p}\,C,
\end{equation}
thus we obtain
\[
\sum_{i=1}^2\int_{\left\{u^\varepsilon_{x_j}\le |C|^\frac{2-p}{p}\,C\right\}\cap B_r}\Big(|u^\varepsilon_{x_i}|^{p-2}+\varepsilon\Big)\,|u^\varepsilon_{x_j}|^\frac{p-2}{2}\,\left|u^\varepsilon_{x_i x_j}\right|^2\,dx=0.
\]
Observe that the two terms are non-negative, thus for $i=j$ we can also infer
\[
\begin{split}
0&=\int_{\left\{u^\varepsilon_{x_j}\le |C|^\frac{2-p}{p}\,C\right\}\cap B_r}|u^\varepsilon_{x_j}|^{\frac{3}{2}\,(p-2)}\,\left|u^\varepsilon_{x_j x_j}\right|^2\,dx\\
&=\left(\frac{4}{3\,p-2}\right)^2\,\int_{\left\{u^\varepsilon_{x_j}\le |C|^\frac{2-p}{p}\,C\right\}\cap B_r} \,\left|\left(|u^\varepsilon_{x_j}|^{\frac{3}{4}\,(p-2)}\,u^\varepsilon_{x_j}\right)_{x_j}\right|^2\,dx\\
&=\left(\frac{4}{3\,p-2}\right)^2\,\int_{B_r} \,\left|\left(\min\left\{|u^\varepsilon_{x_j}|^{\frac{3}{4}\,(p-2)}\,u^\varepsilon_{x_j},\,|C|^\frac{p-2}{2\,p}\,C\right\}\right)_{x_j}\right|^2\,dx,
\end{split}
\]
where we used that
\begin{equation}
\label{2}
u^\varepsilon_{x_j}\le |C|^\frac{2-p}{p}\,C\qquad\Longleftrightarrow \qquad |u^\varepsilon_{x_j}|^{\frac{3}{4}\,(p-2)}\,u^\varepsilon_{x_j}\le |C|^\frac{p-2}{2\,p}\,C.
\end{equation}
This entails that
\[
\left(\min\left\{|u^\varepsilon_{x_j}|^{\frac{3}{4}\,(p-2)}\,u^\varepsilon_{x_j},\,|C|^\frac{p-2}{2\,p}\,C\right\}\right)_{x_j}=0,\qquad \mbox{ a.\,e. in } B_r,
\]
so that the Sobolev function
\[
\min\left\{|u^\varepsilon_{x_j}|^{\frac{3}{4}\,(p-2)}\,u^\varepsilon_{x_j},\,|C|^\frac{p-2}{2\,p}\,C\right\},
\]
does not depend on the variable $x_j$ in $B_r$. By assumption, this function is constant on $\partial B_r$. The last two facts imply that
\[
\min\left\{|u^\varepsilon_{x_j}|^{\frac{3}{4}\,(p-2)}\,u^\varepsilon_{x_j},\,|C|^\frac{p-2}{2\,p}\,C\right\}=|C|^\frac{p-2}{2\,p}\,C,\qquad \mbox{ a.\,e. in } B_r,
\]
which is the desired conclusion, thanks to \eqref{1} and \eqref{2}.
\end{proof}
Finally, we will need the following result about convergence of traces.
\begin{lm}
\label{lm:tracce}
Let $B_r\Subset B$. With the notation above, there exists a sequence \(\{\varepsilon_k\}_{k\in \mathbb{N}}\subset (0,\varepsilon_0)\) such that for almost every $s\in[0,r]$, we have
\[
\lim_{k\to +\infty}\left\||u^{\varepsilon_k}_{x_j}|^\frac{p-2}{2}\,u^{\varepsilon_k}_{x_j}-|U_{x_j}|^\frac{p-2}{2}\,U_{x_j}\right\|_{L^\infty(\partial B_s)}=0,\qquad j=1,2.
\]
\end{lm}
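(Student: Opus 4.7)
My plan is to combine the uniform bounds from Lemma \ref{lm:regularity} with the convergences of Proposition \ref{lm_convergence_ueps} through a polar slicing argument, upgrading strong $L^2$ convergence on circles to uniform convergence via the one-dimensional Sobolev embedding $W^{1,2}(\partial B_s) \hookrightarrow C^{0,1/2}(\partial B_s)$. First I would set $V_j^\varepsilon := |u^\varepsilon_{x_j}|^{(p-2)/2}\,u^\varepsilon_{x_j}$ and $V_j := |U_{x_j}|^{(p-2)/2}\,U_{x_j}$, and recall from Proposition \ref{lm_convergence_ueps} that $V_j^\varepsilon \to V_j$ strongly in $L^2(B_r)$ and weakly in $W^{1,2}(B_r)$, while Lemma \ref{lm:regularity} supplies a uniform bound in $L^\infty(B_r) \cap W^{1,2}(B_r)$. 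Using polar coordinates centred at the centre of $B$, Fubini yields
\[
\int_0^r \|V_j^\varepsilon - V_j\|^2_{L^2(\partial B_s)}\,ds \xrightarrow[\varepsilon \to 0]{} 0, \qquad \int_0^r \|V_j^\varepsilon\|^2_{W^{1,2}(\partial B_s)}\,ds \le C,
\]
the second bound being uniform in $\varepsilon$. Extracting a subsequence $\{\varepsilon_k\}$, the first display gives $\|V^{\varepsilon_k}_j - V_j\|_{L^2(\partial B_s)} \to 0$ for a.e.\ $s$, and Fatou applied to the second yields $\liminf_{k} \|V^{\varepsilon_k}_j\|^2_{W^{1,2}(\partial B_s)} < \infty$ for a.e.\ $s$.

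For any $s \in (0,r)$ at which both a.e.\ properties hold, I would then pass along a sub-subsequence bounded in $W^{1,2}(\partial B_s)$; by the Sobolev embedding and Arzelà--Ascoli this sub-subsequence is relatively compact in $C(\partial B_s)$, and the $L^2(\partial B_s)$ convergence identifies $V_j|_{\partial B_s}$ as the only possible accumulation point. A uniqueness-of-limit argument run over every further subsequence of $\{\varepsilon_k\}$ would then upgrade this to $\|V^{\varepsilon_k}_j - V_j\|_{L^\infty(\partial B_s)} \to 0$ for a.e.\ $s$, which is the required conclusion.

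The main obstacle is that Fatou delivers only $\liminf$-control over $\|V^{\varepsilon_k}_j\|_{W^{1,2}(\partial B_s)}$, and this is not inherited by arbitrary subsequences, while the uniqueness-of-limit argument above requires a bounded sub-subsequence in $W^{1,2}(\partial B_s)$ to be extractable from \emph{every} further subsequence of $\{\varepsilon_k\}$. I would resolve this by thinning $\{\varepsilon_k\}$ so that in fact $\sup_k \|V^{\varepsilon_k}_j\|^2_{W^{1,2}(\partial B_s)} < \infty$ for a.e.\ $s$: the level sets $\{s \in (0,r) : \|V^{\varepsilon_k}_j\|^2_{W^{1,2}(\partial B_s)} > M\}$ have Lebesgue measure at most $C/M$ by Chebyshev, so a sufficiently sparse diagonal choice of $\varepsilon_k$ combined with a Borel--Cantelli argument (or a Chacon--biting decomposition of the $L^1(0,r)$-bounded sequence $s \mapsto \|V^{\varepsilon_k}_j\|^2_{W^{1,2}(\partial B_s)}$) produces the desired pointwise boundedness a.e.\ in $s$, after which the extraction scheme of the previous paragraph goes through.
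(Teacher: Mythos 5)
Your setup --- slicing in polar coordinates, extracting a subsequence with $\|V_j^{\varepsilon_k}-V_j\|_{L^2(\partial B_s)}\to 0$ for a.e.\ $s$, and then using the one--dimensional embedding $W^{1,2}(\partial B_s)\hookrightarrow C^0(\partial B_s)$ --- is sound up to the point you yourself flag, but the fix you propose for that point does not work, and this is a genuine gap. What your Arzel\`a--Ascoli/uniqueness-of-limit scheme requires is that, after extraction, $\sup_k\|V_j^{\varepsilon_k}\|^2_{W^{1,2}(\partial B_s)}<\infty$ for a.e.\ $s$; all you have is that the functions $f_k(s):=\|V_j^{\varepsilon_k}\|^2_{W^{1,2}(\partial B_s)}$ are bounded in $L^1(0,r)$. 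Chebyshev gives $|\{f_k>M\}|\le C/M$ uniformly in $k$, but this bound does not decay along any subsequence, so Borel--Cantelli at a fixed level $M$ is unavailable ($\sum_j C/M=\infty$), while Borel--Cantelli with levels $M_j\to\infty$ only yields the non-uniform conclusion $f_{k_j}(s)\le M_j$ for $j$ large, which is not a bound on $\sup_j f_{k_j}(s)$; the biting lemma gives uniform integrability off small sets, again not pointwise domination. In fact the measure-theoretic statement you are invoking is false: take jointly independent sets $A_{k,m}\subset(0,1)$ with $|A_{k,m}|=4^{-m}$ and $f_k=\sum_m 2^m\,\mathbf{1}_{A_{k,m}}$; then $\int_0^1 f_k\,ds=1$ for every $k$, yet for every infinite subsequence and every $m$ the second Borel--Cantelli lemma shows that almost every $s$ lies in some $A_{k_j,m}$, whence $\sup_j f_{k_j}=+\infty$ almost everywhere. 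So no purely measure-theoretic thinning of an $L^1$-bounded sequence of slice norms can produce the a.e.\ pointwise bound, and nothing in your argument rules out this concentration scenario for $|\nabla V_j^{\varepsilon_k}|^2$.

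The way out --- and what the paper does --- is to slice a quantity that converges \emph{strongly}, so that the sliced norms tend to $0$ in $L^1(0,r)$ and hence a.e.\ along a subsequence, with no need for a uniform-in-$k$ bound. Concretely, $V_j^{\varepsilon}-V_j\rightharpoonup 0$ in $W^{1,2}(B_r)$ implies, by compactness of the embedding, strong convergence to $0$ in $W^{\tau,2}(B_r)$ for $1/2<\tau<1$; Lemma \ref{lm:dellafrancesca} then yields a subsequence converging strongly to $0$ in $W^{\tau,2}(\partial B_s)$ for a.e.\ $s$, and the embedding $W^{\tau,2}(\partial B_s)\hookrightarrow C^0(\partial B_s)$ for $\tau>1/2$ concludes. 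Alternatively, your own scheme can be repaired without fractional spaces via the multiplicative interpolation $\|g\|_{L^\infty(\partial B_s)}\le C_\delta\,\|g\|_{L^2(\partial B_s)}^{1/2}\,\|g\|_{W^{1,2}(\partial B_s)}^{1/2}+C_\delta\,\|g\|_{L^2(\partial B_s)}$ for $s\in[\delta,r]$, applied to $g=V_j^{\varepsilon_k}-V_j$: integrating in $s$ and using H\"older, the right-hand side tends to $0$ because one factor converges strongly in the sliced $L^2$ sense while the other is merely bounded, so $\|V_j^{\varepsilon_k}-V_j\|_{L^\infty(\partial B_s)}\to 0$ in $L^1((\delta,r))$ and hence a.e.\ along a further subsequence, and a diagonal argument in $\delta$ finishes. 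Either way, the decisive ingredient is strong convergence of some norm of the difference; the Fatou/Borel--Cantelli route never supplies it.
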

\begin{proof}
We first observe that 
\[
\left\{|u^{\varepsilon}_{x_j}|^\frac{p-2}{2}\,u^{\varepsilon}_{x_j}-|U_{x_j}|^\frac{p-2}{2}\,U_{x_j}\right\}_{0<\varepsilon<\varepsilon_0},
\] 
weakly converges to \(0\) in $W^{1,2}(B_r)$, thanks to Proposition \ref{lm_convergence_ueps}. Thus for every $0<\tau<1$, there exists a subsequence which strongly converges to $0$ in the fractional Sobolev space $W^{\tau,2}(B_r)$. 
We take $1/2<\tau<1$ and observe that the previous convergence implies that we can extract again a subsequence which strongly converges to $0$ in $W^{\tau,2}(\partial B_s)$, for almost every $s\in[0,r]$ (see Lemma \ref{lm:dellafrancesca}). In order to conclude, it is now sufficient to use that for $1/2<\tau<1$, the space $W^{\tau,2}(\partial B_s)$ is continuously embedded in $C^0(\partial B_s)$ (since $\partial B_s$ is one-dimensional, see \cite[Theorem 7.57]{Ad}). 
\end{proof}

\subsection{Regularization scheme, case $1<p\le 2$}

In this case, the functional in \eqref{approximation} is not smooth enough, in particular is not $C^2$. Thus the regularized problem is now
\begin{equation}
\label{approximationsub}
\min\left\{\sum_{i=1}^2\frac{1}{p}\,\int_{B} \left(\varepsilon+|w_{x_i}|^2\right)^\frac{p}{2}\, :\, w-U_\varepsilon \in W^{1,p}_0(B)\right\}.
\end{equation}
This problem admits a unique solution \(u^\varepsilon\), which is smooth on \(\overline{B}\),  see again \cite[Theorem 2.4]{BBJ}. Moreover, the solution \(u^\varepsilon\) satisfies the corresponding Euler-Lagrange equation, i.\,e.
\begin{equation}
\label{nondiff}
\sum_{i=1}^2\int_{B} (\varepsilon+|u^\varepsilon_{x_i}|^2)^\frac{p-2}{2}\,u^\varepsilon_{x_i}\,\varphi_{x_i}  \,dx=0,\qquad \mbox{ for every }\varphi\in W^{1,p}_0(B).
\end{equation}
We still have the following uniform estimate. The proof is standard routine and is left to the reader.
\begin{lm}[Uniform energy estimate]
\label{lm:energybis}
There exists a constant $C=C(p)>0$ such that for every $0<\varepsilon<\varepsilon_0$ the following estimate holds 
\begin{equation}
\label{uniformegbis}
\int_B |\nabla u^\varepsilon|^p\, dx\le C\left(\int_{\lambda_B\,B} |\nabla U|^p\,dx+\varepsilon^{\frac{p}{2}}\,|B|\right).
\end{equation}
Moreover, the family $\{u^{\varepsilon}\}_{0<\varepsilon<\varepsilon_0}$ converges weakly in $W^{1,p}(B)$ and strongly in $L^p(B)$ to $U$.
\end{lm}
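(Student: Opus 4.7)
The plan is to mimic the proof of Lemma \ref{lm:energy}: test the minimality of $u^\varepsilon$ against the admissible competitor $U^\varepsilon$ (which has the same trace on $\partial B$), and then manipulate the resulting integral inequality using elementary inequalities adapted to the regime $1<p\le 2$.

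More precisely, since $u^{\varepsilon}-U^\varepsilon\in W^{1,p}_0(B)$, minimality in \eqref{approximationsub} yields
\[
\sum_{i=1}^2\int_B \left(\varepsilon+|u^\varepsilon_{x_i}|^2\right)^{p/2}\,dx\le \sum_{i=1}^2\int_B \left(\varepsilon+|U^\varepsilon_{x_i}|^2\right)^{p/2}\,dx.
\]
For the right-hand side, I use concavity of $t\mapsto t^{p/2}$ when $p\le 2$, which gives the subadditivity inequality $(a+b)^{p/2}\le a^{p/2}+b^{p/2}$ for $a,b\ge 0$. Applying this with $a=\varepsilon$ and $b=|U^\varepsilon_{x_i}|^2$ produces
\[
\sum_{i=1}^2\int_B \left(\varepsilon+|U^\varepsilon_{x_i}|^2\right)^{p/2}dx\le 2\,\varepsilon^{p/2}\,|B|+\sum_{i=1}^2\int_B |U^\varepsilon_{x_i}|^p\,dx,
\]
and then \eqref{Uepsilon} bounds the last term by $C\,\int_{\lambda_B B}|\nabla U|^p\,dx$.
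For the left-hand side, I use the converse inequality available in the range $p\le 2$, namely $(\sum_i a_i^2)^{p/2}\le \sum_i a_i^p$ for $a_i\ge 0$ (here $\ell^2\le \ell^p$ on two components when $p\le 2$). This gives
\[
\int_B |\nabla u^\varepsilon|^p\,dx\le \sum_{i=1}^2\int_B |u^\varepsilon_{x_i}|^p\,dx\le \sum_{i=1}^2\int_B\left(\varepsilon+|u^\varepsilon_{x_i}|^2\right)^{p/2}dx.
\]
Combining these three estimates proves \eqref{uniformegbis} with a purely $p$-dependent constant.

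For the convergence part, \eqref{uniformegbis} shows that $\{u^\varepsilon\}_{0<\varepsilon<\varepsilon_0}$ is bounded in $W^{1,p}(B)$. Since $u^\varepsilon-U^\varepsilon\in W^{1,p}_0(B)$ and $U^\varepsilon\to U$ strongly in $W^{1,p}(B)$ (standard convolution argument, using that $B\Subset\Omega$), any weak $W^{1,p}(B)$ limit $w$ of a subsequence satisfies $w-U\in W^{1,p}_0(B)$. Testing minimality of $u^{\varepsilon_k}$ against an arbitrary $\varphi$ with $\varphi-U\in W^{1,p}_0(B)$, passing to the limit with the aid of weak lower semicontinuity of $\mathfrak{F}(\,\cdot\,;B)$ and of the elementary bound $|w_{x_i}|^p\le (\varepsilon+|w_{x_i}|^2)^{p/2}\le \varepsilon^{p/2}+|w_{x_i}|^p$ on the right, shows that $w$ is itself a minimizer of $\mathfrak{F}(\,\cdot\,;B)$ subject to the boundary value $U$. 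Strict convexity of $\mathfrak{F}$ forces $w=U$, and the uniqueness of the subsequential limit promotes the convergence to the whole family. Strong $L^p(B)$ convergence then follows from Rellich's theorem.

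The argument is essentially routine; the only mildly delicate point is choosing the correct direction of the scalar inequality $(a^2+b^2)^{p/2}$ versus $a^p+b^p$ in each of the two bounds, since for $1<p\le 2$ the inequality goes in the opposite direction to the $p\ge 2$ case of Lemma \ref{lm:energy}. Apart from this, there is no real obstacle: the estimate ultimately relies only on the admissibility of $U^\varepsilon$ as a competitor and on \eqref{Uepsilon}.
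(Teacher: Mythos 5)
Your proof is correct and is exactly the ``standard routine'' argument the paper leaves to the reader: test minimality against the admissible competitor $U^\varepsilon$, use the concavity inequality $(\varepsilon+t^2)^{p/2}\le \varepsilon^{p/2}+|t|^p$ on the right and $|\nabla w|^p\le\sum_i |w_{x_i}|^p\le\sum_i(\varepsilon+|w_{x_i}|^2)^{p/2}$ on the left (both valid precisely because $1<p\le 2$), and conclude with the mollification bound; the identification of the weak limit via lower semicontinuity and strict convexity matches what the paper does for Lemma \ref{lm:energy}. The only cosmetic point is that an arbitrary competitor $\varphi$ with $\varphi-U\in W^{1,p}_0(B)$ must first be shifted to $\varphi+U^{\varepsilon_k}-U$ to be admissible for the regularized problem before passing to the limit, which changes nothing.
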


We will rely on the following Cacciopoli inequality to obtain certain bounds on the family \(\{u^\varepsilon\}_{0<\varepsilon<\varepsilon_0}\).
\begin{prop}[Caccioppoli inequality for the gradient, $1<p\le 2$]
\label{lm:caccioespilon2}
Let $\zeta:\mathbb{R}\to \mathbb{R}$ be a $C^1$ monotone function, then for every $\eta\in C^2$ with compact support in $B$ we have
\begin{equation}
\label{uniformesobsub2}
\begin{split}
\sum_{i=1}^2\int & (\varepsilon+|u^\varepsilon_{x_i}|^2)^\frac{p-2}{2}\,|\zeta'(u^\varepsilon_{x_j})|\,\left|u^\varepsilon_{x_j\,x_i}\right|^2\,\eta^2\,dx\\
& \le C\, \int (\varepsilon+|\nabla u^\varepsilon|^2)^\frac{p}{2}\,|\zeta'(u^\varepsilon_{x_j})|\,|\nabla \eta|^2\,dx\\
&+C\,\int (\varepsilon+|\nabla u^\varepsilon|^2)^\frac{p-1}{2}\,|\zeta(u^\varepsilon_{x_j})|\,\Big(|\nabla \eta|^2+|D^2\eta|\Big)\,dx,
\end{split}
\end{equation}
for some $C=C(p)>0$.
\end{prop}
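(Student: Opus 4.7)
The plan is to differentiate the regularized Euler--Lagrange equation \eqref{nondiff} in the direction $e_j$---admissible since $u^\varepsilon$ is smooth on $\overline{B}$---and then test the resulting linearized equation against
\[
\psi \;=\; \zeta(u^\varepsilon_{x_j})\,\eta^2,
\]
which lies in $W^{1,\infty}_0(B)$ thanks to the smoothness of $u^\varepsilon$ and the $C^1$ regularity of $\zeta$. Without loss of generality I assume $\zeta$ is non-decreasing, so $|\zeta'|=\zeta'$; the non-increasing case follows by replacing $\zeta$ with $-\zeta$. Setting
\[
a_i(t) \;:=\; \partial_t\!\left[(\varepsilon + t^2)^{(p-2)/2}\,t\right] \;=\; (\varepsilon + t^2)^{(p-4)/2}\bigl[\varepsilon + (p-1)\,t^2\bigr],
\]
the linearized equation reads $\sum_i \int a_i(u^\varepsilon_{x_i})\,u^\varepsilon_{x_j x_i}\,\psi_{x_i}\,dx=0$, and the hypothesis $1<p\le 2$ yields the sandwich $(p-1)(\varepsilon+t^2)^{(p-2)/2}\le a_i(t)\le (\varepsilon+t^2)^{(p-2)/2}$. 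Expanding $\psi_{x_i}$ and separating the contribution of $\zeta'(u^\varepsilon_{x_j})u^\varepsilon_{x_j x_i}\eta^2$ (the \emph{good} term) from that of $2\eta\,\eta_{x_i}\,\zeta(u^\varepsilon_{x_j})$ (the \emph{bad} term) gives the identity
\[
\sum_i \int a_i(u^\varepsilon_{x_i})\,|u^\varepsilon_{x_j x_i}|^2\,\zeta'(u^\varepsilon_{x_j})\,\eta^2\,dx \;=\; -2\sum_i \int a_i(u^\varepsilon_{x_i})\,u^\varepsilon_{x_j x_i}\,\eta\,\eta_{x_i}\,\zeta(u^\varepsilon_{x_j})\,dx.
\]
The left-hand side already has the desired form, but the right-hand side carries $\zeta$ rather than $\zeta'$, so a direct Cauchy--Schwarz cannot close the estimate.

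The central trick is the identity
\[
a_i(u^\varepsilon_{x_i})\,u^\varepsilon_{x_j x_i} \;=\; \partial_{x_j}\!\left[(\varepsilon + |u^\varepsilon_{x_i}|^2)^{(p-2)/2}\,u^\varepsilon_{x_i}\right],
\]
which allows me to integrate by parts in $x_j$ on the right-hand side, moving $\partial_{x_j}$ onto $\eta\,\eta_{x_i}\,\zeta(u^\varepsilon_{x_j})$. The resulting three contributions all share the prefactor $(\varepsilon + |u^\varepsilon_{x_i}|^2)^{(p-2)/2}\,u^\varepsilon_{x_i}$, whose absolute value is at most $(\varepsilon + |\nabla u^\varepsilon|^2)^{(p-1)/2}$: the derivative may fall on $\eta\,\eta_{x_i}$ (producing either $\eta_{x_j}\eta_{x_i}\,\zeta(u^\varepsilon_{x_j})$ or $\eta\,\eta_{x_i x_j}\,\zeta(u^\varepsilon_{x_j})$) or on $\zeta(u^\varepsilon_{x_j})$ (producing $\eta\,\eta_{x_i}\,\zeta'(u^\varepsilon_{x_j})\,u^\varepsilon_{x_j x_j}$).

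The two cutoff terms are already of the form $(\varepsilon + |\nabla u^\varepsilon|^2)^{(p-1)/2}\,|\zeta(u^\varepsilon_{x_j})|\,(|\nabla\eta|^2 + |D^2\eta|)$ appearing as the second piece of the target right-hand side. For the remaining term, I apply Cauchy--Schwarz with the splitting
\[
(\varepsilon + |u^\varepsilon_{x_j}|^2)^{(p-2)/4}\,|\zeta'|^{1/2}\,\eta\,|u^\varepsilon_{x_j x_j}| \quad\text{vs.}\quad (\varepsilon + |u^\varepsilon_{x_j}|^2)^{-(p-2)/4}\,(\varepsilon + |\nabla u^\varepsilon|^2)^{(p-1)/2}\,|\zeta'|^{1/2}\,|\eta_{x_i}|.
\]
Using $(\varepsilon + |u^\varepsilon_{x_j}|^2)^{(2-p)/2}\le (\varepsilon + |\nabla u^\varepsilon|^2)^{(2-p)/2}$ (which is where $p\le 2$ enters), the square of the second factor equals $(\varepsilon + |\nabla u^\varepsilon|^2)^{p/2}\,|\zeta'(u^\varepsilon_{x_j})|\,|\eta_{x_i}|^2$, exactly the first piece of the target right-hand side. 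The square of the first factor matches the $i=j$ contribution to the left-hand side and is absorbed, provided the Cauchy--Schwarz parameter is chosen below $(p-1)/2$.

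The main obstacle is precisely this mismatch between $\zeta'$ in the good term and $\zeta$ in the bad term: the integration by parts in $x_j$ is tailored to convert one factor of $\zeta$ into $\zeta'$ at the cost of additional derivatives on $\eta$, producing a right-hand side which is simultaneously structurally compatible with the left-hand side (for the absorption) and of the form demanded by \eqref{uniformesobsub2}.
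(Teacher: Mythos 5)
Your proposal is correct and follows essentially the same route as the paper: both arguments reduce to the identity obtained by pairing the differentiated equation with $\zeta(u^\varepsilon_{x_j})\,\eta^2$ and then integrating by parts in $x_j$ on the term carrying $\zeta$ rather than $\zeta'$ (the Fonseca--Fusco trick), after which the $u^\varepsilon_{x_j x_j}$ contribution is absorbed by Young's inequality exactly as you describe. The only cosmetic difference is the order of operations --- the paper tests the undifferentiated equation against $(\zeta(u^\varepsilon_{x_j})\,\eta^2)_{x_j}$ and integrates by parts afterwards, which is why it first assumes $\zeta\in C^2$ and removes this by approximation, whereas your route needs only $\zeta\in C^1$.
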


\begin{proof}
We suppose that $\zeta\in C^2$, then the general result can be obtained with a standard approximation argument.
In order to obtain \eqref{uniformesobsub2}, we use a trick by Fonseca and Fusco \cite{FF} in order to avoid the use of the upper bound on the Hessian of 
\[
H_\varepsilon(t):=\frac{1}{p}\,(\varepsilon+t^2)^\frac{p}{2},\qquad t\in\mathbb{R},
\]
see also \cite{EM} and \cite{FFM}. 
\par
We start by testing \eqref{nondiff} against $\varphi=(\zeta(u^\varepsilon_{x_j})\,\eta^2)_{x_j}$. Thus we get 
\[
\sum_{i=1}^2\int_{B}  (\varepsilon+|u^\varepsilon_{x_i}|^2)^\frac{p-2}{2}\,u^\varepsilon_{x_i}\,(\zeta(u^\varepsilon_{x_j})\,\eta^2)_{x_j\,x_i}  \,dx=0.
\]
By using the smoothness of $u^\varepsilon$ and $\eta$, we have
\[
\begin{split}
(\zeta(u^\varepsilon_{x_j})\,\eta^2)_{x_j\,x_i}&=(\zeta(u^\varepsilon_{x_j})\,\eta^2)_{x_i\,x_j}\\
&=\Big(\zeta'(u^\varepsilon_{x_j})\,u^\varepsilon_{x_j\,x_i}\,\eta^2+2\,\zeta(u^\varepsilon_{x_j})\,\eta\,\eta_{x_i}\Big)_{x_j}\\
&=\Big(\zeta'(u^\varepsilon_{x_j})\,u^\varepsilon_{x_j\,x_i}\,\eta^2\Big)_{x_j}+2\,\Big(\zeta(u^\varepsilon_{x_j})\,\eta\,\eta_{x_i}\Big)_{x_j}.
\end{split}
\]
By using an integration by parts, we thus obtain
\[
\begin{split}
-\sum_{i=1}^2\int_{B}& \Big( (\varepsilon+|u^\varepsilon_{x_i}|^2)^\frac{p-2}{2}\,u^\varepsilon_{x_i}\Big)_{x_j}\,\zeta'(u^\varepsilon_{x_j})\,u^\varepsilon_{x_j\,x_i}\,\eta^2\,dx\\
&+2\,\sum_{i=1}^2\int_{B}  (\varepsilon+|u^\varepsilon_{x_i}|^2)^\frac{p-2}{2}\,u^\varepsilon_{x_i}\,(\zeta(u^\varepsilon_{x_j})\,\eta\,\eta_{x_i})_{x_j}  \,dx=0.
\end{split}
\]
With simple manipulations, this becomes
\begin{equation}
\label{reaccrocher}
\begin{split}
\sum_{i=1}^2\int_{B}& \Big(\varepsilon+|u^\varepsilon_{x_i}|^2\Big)^\frac{p-2}{2}\,\zeta'(u^\varepsilon_{x_j})\,\left|u^\varepsilon_{x_j\,x_i}\right|^2\,\eta^2\,dx\\
&+(p-2)\,\sum_{i=1}^2\int_{B}\Big(\varepsilon+|u^\varepsilon_{x_i}|^2\Big)^\frac{p-4}{2}\,|u^\varepsilon_{x_i}|^2\,\zeta'(u^\varepsilon_{x_j})\,\left|u^\varepsilon_{x_j\,x_i}\right|^2\,\eta^2\,dx\\
&=2\,\sum_{i=1}^2\int_{B} (\varepsilon+|u^\varepsilon_{x_i}|^2)^\frac{p-2}{2}\,u^\varepsilon_{x_i}\,\zeta'(u^\varepsilon_{x_j})\,u^\varepsilon_{x_j\,x_j}\,\eta\,\eta_{x_i} \,dx\\
&+2\,\sum_{i=1}^2\int_{B} (\varepsilon+|u^\varepsilon_{x_i}|^2)^\frac{p-2}{2}\,u^\varepsilon_{x_i}\,\zeta(u^\varepsilon_{x_j})\,(\eta\,\eta_{x_i})_{x_j}  \,dx.
\end{split}
\end{equation}
We now observe that 
\[
\begin{split}
\sum_{i=1}^2\int_{B} &(\varepsilon+|u^\varepsilon_{x_i}|^2)^\frac{p-2}{2}\,\zeta'(u^\varepsilon_{x_j})\,\left|u^\varepsilon_{x_j\,x_i}\right|^2\,\eta^2\,dx\\
&+(p-2)\,\sum_{i=1}^2\int_{B}\Big(\varepsilon+|u^\varepsilon_{x_i}|^2\Big)^\frac{p-4}{2}\,|u^\varepsilon_{x_i}|^2\,\zeta'(u^\varepsilon_{x_j})\,\left|u^\varepsilon_{x_j\,x_i}\right|^2\,\eta^2\,dx\\
&=\sum_{i=1}^2\int_{B} (\varepsilon+|u^\varepsilon_{x_i}|^2)^\frac{p-4}{2}\,(\varepsilon+(p-1)\,|u_{x_i}^\varepsilon|^2)\,\zeta'(u^\varepsilon_{x_j})\,\left|u^\varepsilon_{x_j\,x_i}\right|^2\,\eta^2\,dx
\end{split}
\]
so that the left-hand side of \eqref{reaccrocher} has a sign. Thus we obtain\footnote{Recall that by hypothesis, $\zeta'$ has constant sign.}
\begin{equation}
\label{reaccrocher2}
\begin{split}
\sum_{i=1}^2&\int_{B} (\varepsilon+|u^\varepsilon_{x_i}|^2)^\frac{p-4}{2}\,(\varepsilon+(p-1)\,|u_{x_i}^\varepsilon|^2)\,|\zeta'(u^\varepsilon_{x_j})|\,\left|u^\varepsilon_{x_j\,x_i}\right|^2\,\eta^2\,dx\\
&\le 2\,\sum_{i=1}^2\int_{B} (\varepsilon+|u^\varepsilon_{x_i}|^2)^\frac{p-2}{2}\,|u^\varepsilon_{x_i}|\,|\zeta'(u^\varepsilon_{x_j})|\,\left|u^\varepsilon_{x_j\,x_j}\right|\,\eta\,|\eta_{x_i}| \,dx\\
&+2\,\sum_{i=1}^2\int_{B} (\varepsilon+|u^\varepsilon_{x_i}|^2)^\frac{p-2}{2}\,|u^\varepsilon_{x_i}|\,|\zeta(u^\varepsilon_{x_j})|\,\left|(\eta\,\eta_{x_i})_{x_j}\right|  \,dx.
\end{split}
\end{equation}
We now estimate the left-hand side of \eqref{reaccrocher2} from below
\[
\begin{split}
\sum_{i=1}^2&\int_{B} (\varepsilon+|u^\varepsilon_{x_i}|^2)^\frac{p-4}{2}\,(\varepsilon+(p-1)\,|u_{x_i}^\varepsilon|^2)\,|\zeta'(u^\varepsilon_{x_j})|\,\left|u^\varepsilon_{x_j\,x_i}\right|^2\,\eta^2\,dx\\
&\ge (p-1)\,\sum_{i=1}^2\int_{B} (\varepsilon+|u^\varepsilon_{x_i}|^2)^\frac{p-2}{2}\,|\zeta'(u^\varepsilon_{x_j})|\,\left|u^\varepsilon_{x_j\,x_i}\right|^2\,\eta^2\,dx\\
&\ge \frac{p-1}{2}\,\sum_{i=1}^2\int_{B} (\varepsilon+|u^\varepsilon_{x_i}|^2)^\frac{p-2}{2}\,|\zeta'(u^\varepsilon_{x_j})|\,\left|u^\varepsilon_{x_j\,x_i}\right|^2\,\eta^2\,dx\\
&+\frac{p-1}{2}\,\sum_{i=1}^2\int_{B} (\varepsilon+|\nabla u^\varepsilon|^2)^\frac{p-2}{2}\,|\zeta'(u^\varepsilon_{x_j})|\,\left|u^\varepsilon_{x_j\,x_i}\right|^2\,\eta^2\,dx,
\end{split}
\]
where we used that $p-2<0$. We will use the last term as a {\it sponge term} in order to absorb the second derivatives of $u^\varepsilon$ contained in the right-hand side.
\par
As for the first term in the right-hand side of \eqref{reaccrocher2}
\[
\begin{split}
\int_{B} (\varepsilon+|u^\varepsilon_{x_i}|^2)^\frac{p-2}{2}&\,|u^\varepsilon_{x_i}|\,|\zeta'(u^\varepsilon_{x_j})|\,\left|u^\varepsilon_{x_j\,x_j}\right|\,\eta\,|\eta_{x_i}| \,dx\\
&\le \int_{B} (\varepsilon+|u^\varepsilon_{x_i}|^2)^\frac{p-1}{2}\,|\zeta'(u^\varepsilon_{x_j})|\,|u^\varepsilon_{x_j\,x_j}|\,\eta\,|\eta_{x_i}| \,dx\\
&\le \int_{B} (\varepsilon+|\nabla u^\varepsilon|^2)^\frac{p-1}{2}\,|\zeta'(u^\varepsilon_{x_j})|\,|u^\varepsilon_{x_j\,x_j}|\,\eta\,|\eta_{x_i}| \,dx\\
&\le \frac{1}{2\,\tau}\,\int_{B_R} (\varepsilon+|\nabla u^\varepsilon|^2)^\frac{p}{2}\,|\zeta'(u^\varepsilon_{x_j})|\,|\nabla \eta|^2\,dx\\
&+\frac{\tau}{2} \int_{B}(\varepsilon+|\nabla u^\varepsilon|^2)^\frac{p-2}{2}\, |\zeta'(u^\varepsilon_{x_j})|\,\left|u^\varepsilon_{x_j\,x_j}\right|^2\,\eta^2\,dx.
\end{split}
\]
Also, for the last term of \eqref{reaccrocher2}, we simply get 
\[
\begin{split}
\int_{B} &(\varepsilon+|u^\varepsilon_{x_i}|^2)^\frac{p-2}{2}\,|u^\varepsilon_{x_i}|\,|\zeta(u^\varepsilon_{x_j})|\,\left|(\eta\,\eta_{x_i})_{x_j}\right|\,dx\\
&\le \int_{B_R} (\varepsilon+|\nabla u^\varepsilon|^2)^\frac{p-1}{2}\,|\zeta(u^\varepsilon_{x_j})|\,\Big(|\nabla \eta|^2+|D^2\eta|\Big)\,dx.\\
\end{split}
\]
By using these estimates in \eqref{reaccrocher} and taking $\tau=(p-1)/2$ in order to absorb the Hessian term on the right-hand side, we obtain
\begin{equation}
\label{stimarella2}
\begin{split}
\sum_{i=1}^2\int_{B}& (\varepsilon+|u^\varepsilon_{x_i}|^2)^\frac{p-2}{2}\,|\zeta'(u^\varepsilon_{x_j})|\,\left|u^\varepsilon_{x_j\,x_i}\right|^2\,\eta^2\,dx\\
& \le C\,\int_{B_R} (\varepsilon+|\nabla u^\varepsilon|^2)^\frac{p}{2}\,|\zeta'(u^\varepsilon_{x_j})|\,|\nabla \eta|^2\,dx\\
&+C\,\int_{B_R} (\varepsilon+|\nabla u^\varepsilon|^2)^\frac{p-1}{2}\,|\zeta(u^\varepsilon_{x_j})|\,\Big(|\nabla \eta|^2+|D^2\eta|\Big)\,dx,
\end{split}
\end{equation}
which is exactly \eqref{uniformesobsub2}.
\end{proof}

We now collect some bounds on the family \(\{u_{\varepsilon}\}_{0<\varepsilon<\varepsilon_0}\).
\begin{lm}[Uniform estimates, $1<p\le 2$]
\label{lm:lemma1p2}
Let $1<p\le 2$, then for every $B_r\Subset B$ we have
\begin{equation}
\label{uniformelipsub}
\|u^\varepsilon\|_{L^\infty(B_r)}+\|\nabla u^\varepsilon\|_{L^\infty(B_r)}\le C,
\end{equation}
\begin{equation}
\label{uniformesobsub}
\sum_{i=1}^2\int_{B_r} \left(\varepsilon+|u^\varepsilon_{x_i}|^2\right)^\frac{p-2}{2}\,\left|u^\varepsilon_{x_i\,x_j}\right|^2\le C,\qquad j=1,2,
\end{equation}
and
\begin{equation}
\label{uniformeH2sub}
\int_{B_r} |\nabla u^\varepsilon_{x_j}|^2\,dx\le C,\qquad j=1,2,
\end{equation}
for some $C>0$ independent of $\varepsilon$.
\end{lm}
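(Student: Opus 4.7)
The plan is to follow closely the structure of Lemma \ref{lm:regularity}, adapted to the different regularization used in \eqref{approximationsub}.

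For the $L^\infty$ bound on $u^\varepsilon$ in \eqref{uniformelipsub}, I would appeal to the standard De Giorgi iteration scheme for local minimizers of strictly convex variational integrals with $p$-growth, as in \cite[Chapter 7]{Gi}; the estimate depends only on $\|U^\varepsilon\|_{L^p(B)}$ (hence on the original $U$ via \eqref{Uepsilon}) and on the $p$-growth constants of $(\varepsilon + t^2)^{p/2}$, which are $\varepsilon$-independent for $\varepsilon\le 1$. For the uniform Lipschitz bound, I would invoke the local Lipschitz estimate from \cite{BBJ} (or the Fonseca--Fusco technique of \cite{FF}) applied to the regularized problem: the integrand $\sum_i (\varepsilon + |w_{x_i}|^2)^{p/2}/p$ has the same orthotropic structure as $\sum_i |w_{x_i}|^p/p$, with a priori estimates that are scale invariant and $\varepsilon$-independent, and combined with \eqref{uniformegbis} this yields \eqref{uniformelipsub}.

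For \eqref{uniformesobsub}, I would apply Proposition \ref{lm:caccioespilon2} with $\zeta$ a bounded smooth monotone approximation of the identity. More precisely, having fixed $B_r \Subset B_{r'} \Subset B$, the Lipschitz bound \eqref{uniformelipsub} on $B_{r'}$ gives $|u^\varepsilon_{x_j}| \le M$ there uniformly in $\varepsilon$; then one can choose $\zeta \in C^1(\mathbb{R})$ monotone with $\zeta(t) = t$ on $[-M, M]$ and $\|\zeta\|_\infty + \|\zeta'\|_\infty \le C(M)$. Taking a cutoff $\eta \in C^2_c(B_{r'})$ with $\eta \equiv 1$ on $B_r$ and $|\nabla \eta|^2 + |D^2 \eta| \le C/(r'-r)^2$, the two terms on the right of \eqref{uniformesobsub2} are then controlled by $\|\nabla u^\varepsilon\|_{L^\infty(B_{r'})}$ alone, which is uniform in $\varepsilon$, proving \eqref{uniformesobsub}.

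For \eqref{uniformeH2sub}, the key point is that in the singular regime $1 < p \le 2$, the weight $t \mapsto (\varepsilon + t^2)^{(p-2)/2}$ is nonincreasing in $|t|$. Hence, using the uniform Lipschitz bound on $B_r$,
\[
(\varepsilon + |u^\varepsilon_{x_i}|^2)^{\frac{p-2}{2}} \ge (\varepsilon + M^2)^{\frac{p-2}{2}} \ge (1 + M^2)^{\frac{p-2}{2}} =: c > 0,
\]
for $\varepsilon < 1$, so that \eqref{uniformesobsub} with $j$ fixed immediately gives $\sum_i \int_{B_r} |u^\varepsilon_{x_i x_j}|^2 \, dx \le C/c$, which is \eqref{uniformeH2sub}.

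The only genuine obstacle is the uniform Lipschitz estimate \eqref{uniformelipsub}, which rests on the deep nonlinear a priori theory of \cite{BBJ}/\cite{FF}; once it is in hand, the other two bounds follow almost immediately from the Caccioppoli inequality \eqref{uniformesobsub2} and from the elementary but crucial observation that the singular weight $(\varepsilon + t^2)^{(p-2)/2}$ is bounded away from $0$ on any set where the gradient is uniformly bounded.
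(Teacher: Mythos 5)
Your proposal is correct and follows essentially the same route as the paper: the $L^\infty$ and Lipschitz bounds from \cite{Gi} and \cite{FF}, the Caccioppoli inequality of Proposition \ref{lm:caccioespilon2} for \eqref{uniformesobsub}, and the lower bound on the singular weight $(\varepsilon+t^2)^{(p-2)/2}$ via the uniform Lipschitz estimate for \eqref{uniformeH2sub}. The only (harmless) deviation is that you truncate $\zeta$ and invoke the gradient sup bound to control the right-hand side of \eqref{uniformesobsub2}, whereas the paper simply takes $\zeta(t)=t$ and bounds that right-hand side using only the uniform $L^p$ energy estimate \eqref{uniformegbis}.
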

\begin{proof}
The $L^\infty$ estimate can  be found in \cite[Chapter 7]{Gi} again,  while the Lipschitz estimate follows from \cite[Theorem 2.2]{FF}. More precisely,  for every ball \(B_s\) such that \(B_{2s}\Subset B\), 
\begin{equation}\label{eq_FF_eps}
\sup_{B_{s}}\left(\varepsilon + |\nabla u^{\varepsilon}|^2\right)^{\frac{p}{2}}\,dx \leq C\, \fint_{B_{2s}}\left(\varepsilon + |\nabla u^\varepsilon|\right)^{\frac{p}{2}}\,dx.
\end{equation}
By covering a given ball \(B_r\Subset B\) by a finite number of balls \(B_s\) such that \(B_{2s}\Subset B\) and using the bound on the $L^p$ norm of $\nabla u^\varepsilon$, one easily gets the Lipschitz estimate in \eqref{uniformelipsub} for some constant \(C>0\) which may depend on \(B_r\) but not on \(\varepsilon\).
\vskip.2cm\noindent
In order to prove \eqref{uniformesobsub}, we introduce two balls \(B_r\Subset B_R\Subset B\) and a standard cut-off function $\eta\in C^2$ such that
\[
0\le \eta\le 1,\qquad\eta  \equiv 1 \textrm{ on } B_r, \qquad \eta \equiv 0 \textrm{ on } \mathbb{R}^2\setminus B_{R},
\]
\[
|\nabla \eta|\le\frac{C}{R-r},\qquad |D^2 \eta| \leq \frac{C}{(R-r)^2}.
\]
By taking $\zeta(t)=t$ in \eqref{uniformesobsub2}, one gets
\begin{equation}
\label{stimarella}
\begin{split}
\sum_{i=1}^2\int & (\varepsilon+|u^\varepsilon_{x_i}|^2)^\frac{p-2}{2}\,\left|u^\varepsilon_{x_j\,x_i}\right|^2\,\eta^2\,dx\\
& \le C\int (\varepsilon+|\nabla u^\varepsilon|^2)^\frac{p}{2}\,|\nabla \eta|^2\,dx+C\int (\varepsilon+|\nabla u^\varepsilon|^2)^\frac{p-1}{2}\,|u^\varepsilon_{x_j}|\,\Big(|\nabla \eta|^2+|D^2\eta|\Big)\,dx.
\end{split}
\end{equation}
By recalling the uniform bound on the $L^p$ norm of $\nabla u^\varepsilon$, \eqref{stimarella} gives \eqref{uniformesobsub}.
\vskip.2cm\noindent
We now observe that 
\[
\begin{split}
\sum_{i=1}^2\int_{B} (\varepsilon+|u^\varepsilon_{x_i}|^2)^\frac{p-2}{2}\,\left|u^\varepsilon_{x_j\,x_i}\right|^2\,\eta^2\,dx&\ge\int_{B} (\varepsilon+|\nabla u^\varepsilon|^2)^\frac{p-2}{2}\,\left|u^\varepsilon_{x_j\,x_i}\right|^2\,\eta^2\,dx\\
&\ge \left(\varepsilon+\|\nabla u^\varepsilon\|^2_{L^\infty(B_R)}\right)^\frac{p-2}{2}\int_{B_r}\left|u^\varepsilon_{x_j\,x_i}\right|^2\,dx.
\end{split}
\]
By appealing to \eqref{stimarella}, this yields
\[
\int_{B_r}\left|u^\varepsilon_{x_j\,x_i}\right|^2\,dx\le \frac{C}{(R-r)^2}\,\left(\varepsilon+\|\nabla u^\varepsilon\|^2_{L^\infty(B_R)}\right)^\frac{2-p}{2}\,\int_{B_R} (\varepsilon+|\nabla u^\varepsilon|^2)^\frac{p}{2}\,dx.
\]
In order to conclude, it is sufficient to use \eqref{uniformelipsub} for the ball $B_R\Subset B$ and again the uniform estimate on the $L^p$ norm of $\nabla u^\varepsilon$.
\end{proof}

\begin{prop}
\label{prop:convergence_uep1p2bis}
With the notation above, 
for every $B_r\Subset B$, we have:
\vskip.2cm
\begin{enumerate}
\item \(\{u^{\varepsilon}\}_{0<\varepsilon<\varepsilon_0}\) converges uniformly to \(U\) on \(\overline{B_r}\);
\vskip.2cm
\item \(\{\nabla u^{\varepsilon}\}_{0<\varepsilon<\varepsilon_0}\) converges to \(\nabla U\) weakly in $W^{1,2}(B_r)$ and strongly in $L^2(B_r)$. In particular, we have
\[
U_{x_i}\in W^{1,2}(B_r);
\]
\vskip.2cm
\item $\left\{(\varepsilon+|u^{\varepsilon}_{x_i}|^2)^{\frac{p-2}{4}}\,u^{\varepsilon}_{x_i}\right\}_{0<\varepsilon<\varepsilon_0}$ converges to \(|U_{x_i}|^{\frac{p-2}{2}}\,U_{x_i}\) weakly in $W^{1,2}(B_r)$ and strongly in $L^{4/p}(B_r)$. In particular, we have
\[
|U_{x_i}|^{\frac{p-2}{2}}\,U_{x_i}\in W^{1,2}(B_r). 
\]
\end{enumerate}
\end{prop}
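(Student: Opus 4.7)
The plan is to use the uniform estimates of Lemma \ref{lm:lemma1p2} to extract strongly convergent subsequences and then identify each limit by comparison with the $L^p$-convergence already provided by Lemma \ref{lm:energybis}. In every case, uniqueness of the limit will upgrade subsequential convergence to convergence of the whole family $\{u^\varepsilon\}_{0<\varepsilon<\varepsilon_0}$.

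For item (1), I would combine the uniform Lipschitz bound \eqref{uniformelipsub} with the strong $L^p(B)$ convergence $u^\varepsilon\to U$ furnished by Lemma \ref{lm:energybis} and invoke the Arzel\`a-Ascoli theorem on $\overline{B_r}$. For item (2), the estimate \eqref{uniformeH2sub} gives a uniform $W^{1,2}(B_r)$ bound on $\nabla u^\varepsilon$; weak compactness in $W^{1,2}(B_r)$ together with the Rellich-Kondrachov theorem then produces a subsequence that converges weakly in $W^{1,2}(B_r)$ and strongly in $L^2(B_r)$. The limit must be $\nabla U$, since Lemma \ref{lm:energybis} already gives $\nabla u^\varepsilon\rightharpoonup \nabla U$ in $L^p(B)$.

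The delicate point is item (3). Setting $f_\varepsilon(s):=(\varepsilon+s^2)^{(p-2)/4}\,s$, I would first compute
\[
f_\varepsilon'(s)=(\varepsilon+s^2)^{(p-6)/4}\left[\varepsilon+\frac{p}{2}\,s^2\right],
\]
and use $p/2\le 1$ to deduce the bound $|f_\varepsilon'(s)|^2\le(\varepsilon+s^2)^{(p-2)/2}$. Inserted into \eqref{uniformesobsub}, this yields a uniform $L^2(B_r)$ bound on $\nabla f_\varepsilon(u^\varepsilon_{x_i})$. Since $(p-2)/4\le 0$, I also obtain $|f_\varepsilon(s)|\le|s|^{p/2}$, so the Lipschitz bound \eqref{uniformelipsub} turns into a uniform $L^\infty(B_r)$ bound on $f_\varepsilon(u^\varepsilon_{x_i})$. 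Rellich-Kondrachov in dimension two then delivers a subsequence converging weakly in $W^{1,2}(B_r)$ and strongly in every $L^q(B_r)$ with $q<\infty$, in particular in $L^{4/p}(B_r)$. To identify the limit, I would extract a further subsequence from (2) along which $u^{\varepsilon_k}_{x_i}\to U_{x_i}$ pointwise a.e.; since $f_{\varepsilon_k}(u^{\varepsilon_k}_{x_i})\to|U_{x_i}|^{(p-2)/2}\,U_{x_i}$ a.e., the weak $W^{1,2}(B_r)$ limit must coincide with $|U_{x_i}|^{(p-2)/2}\,U_{x_i}$, and uniqueness closes the argument.

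The chief obstacle will be this identification in (3): because $f_\varepsilon$ converges pointwise to the singular map $s\mapsto|s|^{(p-2)/2}\,s$, one cannot directly pass to the limit at the level of the integrand, and both the a.e. convergence of $u^\varepsilon_{x_i}$ coming from (2) and the uniform $L^\infty$ control on $f_\varepsilon(u^\varepsilon_{x_i})$ are essential in order to combine weak $W^{1,2}$ compactness with the pointwise limit. The algebraic estimates on $f_\varepsilon$ and $f_\varepsilon'$ are routine, but they must be aligned with the weighted second-derivative bound \eqref{uniformesobsub} actually produced by the Caccioppoli inequality of Proposition \ref{lm:caccioespilon2}.
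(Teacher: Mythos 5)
Your proposal is correct, and items (1) and (2) follow the paper's proof verbatim: uniform Lipschitz bound plus Arzel\`a--Ascoli, then the $W^{1,2}$ bound \eqref{uniformeH2sub} plus Rellich--Kondra\v{s}ov, with the limit identified through the weak $W^{1,p}$ convergence of Lemma \ref{lm:energybis}. The only genuine divergence is in item (3), in how you obtain strong $L^{4/p}$ convergence and identify the limit. The paper extracts from \eqref{uniformesobsub} the same uniform $W^{1,2}$ bound on $(\varepsilon+|u^\varepsilon_{x_i}|^2)^{(p-2)/4}u^\varepsilon_{x_i}$ that you do, applies Rellich only to get strong $L^2$ convergence to some $V_i$, and then proves $V_i=|U_{x_i}|^{(p-2)/2}U_{x_i}$ together with the upgrade to $L^{4/p}$ by the quantitative H\"older-type inequality of Corollary \ref{coro:dibene} (the DiBenedetto inequality \eqref{dibene} from the Appendix), splitting the difference into $|f_\varepsilon(u^\varepsilon_{x_i})-f_\varepsilon(U_{x_i})|$, controlled by $|u^\varepsilon_{x_i}-U_{x_i}|^{p/2}$ and hence by the strong $L^2$ convergence of the gradients, plus a term $|f_\varepsilon(U_{x_i})-|U_{x_i}|^{(p-2)/2}U_{x_i}|$ handled by dominated convergence. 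You instead exploit that in two dimensions $W^{1,2}(B_r)$ embeds compactly into every $L^q(B_r)$ with $q<\infty$, so the uniform $W^{1,2}$ bound alone yields strong $L^{4/p}$ subsequential convergence, and you identify the limit by almost-everywhere convergence of $u^{\varepsilon_k}_{x_i}$ (available from item (2)) combined with the elementary pointwise limit $f_{\varepsilon_k}(s_k)\to|s|^{(p-2)/2}s$, which indeed holds also at $s=0$ because $|f_\varepsilon(s)|\le|s|^{p/2}$. Your route avoids Corollary \ref{coro:dibene} entirely and is marginally more elementary; the paper's route is quantitative and does not rely on the dimension-specific compact embedding into all finite $L^q$, which is why they keep the inequality (it is reused elsewhere). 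Your bounds $|f_\varepsilon'(s)|^2\le(\varepsilon+s^2)^{(p-2)/2}$ and $|f_\varepsilon(s)|\le|s|^{p/2}$ are correct for $1<p\le 2$, and the final uniqueness argument upgrading subsequences to the full family is the same as in the paper.
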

\begin{proof}
We already know from Lemma \ref{lm:energybis} that $u^\varepsilon$ converges to $U$ weakly in $W^{1,p}(B)$ and strongly in $L^p(B)$.
\par
By \eqref{uniformelipsub} and the Arzel\`a-Ascoli Theorem,  the convergence of $\{u^{\varepsilon}\}_{0<\varepsilon<\varepsilon_0}$ to $U$ is uniform on $\overline B_r$, for every $B_r\Subset B$.
\par
From estimates \eqref{uniformelipsub} and \eqref{uniformeH2sub}, we get that $\{ u^{\varepsilon}_{x_i}\}_{0<\varepsilon<\varepsilon_0}$ is uniformly bounded in $W^{1,2}(B_r)$. By Rellich-Kondra\v{s}ov Theorem, we can infer strong convergence in $L^2(B_r)$ to $U_{x_i}$, for every $i=1,2$.
\par
We now observe that
\[
\begin{split}
\left|\nabla \left((\varepsilon+|u^{\varepsilon}_{x_i}|^2)^{\frac{p-2}{4}}\,u^{\varepsilon}_{x_i}\right)\right|^2&\le 2\,\left|\frac{p-2}{2}\,(\varepsilon+|u^{\varepsilon}_{x_i}|^2)^{\frac{p-6}{4}}\,\left|u^{\varepsilon}_{x_i}\right|^2\,\nabla u^{\varepsilon}_{x_i}\right|^2\\
&+2\,\left|(\varepsilon+|u^{\varepsilon}_{x_i}|^2)^{\frac{p-2}{4}}\,\nabla u^{\varepsilon}_{x_i}\right|^2\le C\,\sum_{j=1}^2 (\varepsilon+|u^{\varepsilon}_{x_i}|^2)^{\frac{p-2}{2}}\,| u^{\varepsilon}_{x_i\,x_j}|^2.
\end{split}
\]
By \eqref{uniformesobsub}, this implies that
\begin{equation}\label{eq_sequence_weird}
\left\{(\varepsilon+|u^{\varepsilon}_{x_i}|^2)^{\frac{p-2}{4}}\,u^{\varepsilon}_{x_i}\right\}_{0<\varepsilon<\varepsilon_0},\qquad i=1,2,
\end{equation}
is bounded in $W^{1,2}(B_r)$. Again by Rellich-Kondra\v{s}ov Theorem we can assume that, up to a subsequence (we do not relabel), it converges to some function \(V_i\in W^{1,2}(B_r)\), weakly in \(W^{1,2}(B_r)\) and strongly in \(L^2(B_r)\). We now show at the same time that \(V_i=|U_{x_i}|^{\frac{p-2}{2}}U_{x_i}\) and that actually we have strong convergence in $L^{4/p}(B_r)$. Indeed, by using the elementary inequality of Corollary \ref{coro:dibene},
we obtain
\[
\begin{split}
\int_{B_r} &\left|(\varepsilon+|u^{\varepsilon}_{x_i}|^2)^{\frac{p-2}{4}}\,u^{\varepsilon}_{x_i}-|U_{x_i}|^{\frac{p-2}{2}}\,U_{x_i}\right|^\frac{4}{p}\,dx\\
&\le C\, \int_{B_r} \left|(\varepsilon+|u^{\varepsilon}_{x_i}|^2)^{\frac{p-2}{4}}\,u^{\varepsilon}_{x_i}-(\varepsilon+|U_{x_i}|^2)^{\frac{p-2}{4}}\,U_{x_i}\right|^\frac{4}{p}\,dx\\
&+ C\,\int_{B_r} \left|(\varepsilon+|U_{x_i}|^2)^{\frac{p-2}{4}}\,U_{x_i}-|U_{x_i}|^\frac{p-2}{2}\,U_{x_i}\right|^\frac{4}{p}\,dx\\
&\le C\, \int_{B_r} \left|u^{\varepsilon}_{x_i}-U_{x_i}\right|^2\,dx+ C\,\int_{B_r} \left|(\varepsilon+|U_{x_i}|^2)^{\frac{p-2}{4}}\,U_{x_i}-|U_{x_i}|^\frac{p-2}{2}\,U_{x_i}\right|^\frac{4}{p}\,dx.
\end{split}
\]
By using the strong convergence of the gradients proved above (for the first term) and the Dominated Convergence Theorem (for the second one), this implies that \(V_i=|U_{x_i}|^{\frac{p-2}{2}}U_{x_i}\) and the convergence of the full original sequence in \eqref{eq_sequence_weird}, weakly in \(W^{1,2}(B_r)\) and strongly in \(L^{4/p}(B_r)\). The proof is complete.
\end{proof}

Using the above convergence result, one can establish  the following regularity properties for the solution \(U\). 

\begin{teo}[A priori estimates, $1<p\le 2$]
\label{teo:apriorisub}
Every local minimizer $U\in W^{1,p}_{\rm loc}(\Omega)$ of the functional $\mathfrak{F}$ is a locally Lipschitz function, such that for every $\alpha\ge p/2$ we have
\[
|U_{x_i}|^{\alpha-1}\,U_{x_i}\in W^{1,2}_{\rm loc}(\Omega),\qquad i=1,2.
\]  
In particular, we have $\nabla U\in W^{1,2}_{\rm loc}(\Omega;\mathbb{R}^2)$.
Moreover, for every $B_R\Subset\Omega$, we have
\begin{equation}
\label{apriorisub}
\|U_{x_j}\|_{L^\infty(B_{R/2})}\le C_1\,\left(\fint_{B_R} |\nabla U|^p\,dx\right)^\frac{1}{p},\qquad j=1,2,
\end{equation}
\begin{equation}
\label{apriorisub2}
\int_{B_{R/2}} \left|\nabla \left(|U_{x_j}|^{\alpha-1}\,U_{x_j}\right)\right|^2\,dx\le C_2\,\left(\fint_{B_R} |\nabla U|^p\,dx\right)^\frac{2\,\alpha}{p},\qquad j=1,2,
\end{equation}
for some $C_1=C_1(p)>0$ and $C_2=C_2(p,\alpha)>0$.
\end{teo}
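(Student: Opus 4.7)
My plan is to mirror the strategy of the proof of Theorem \ref{teo:apriori} in the degenerate case: I would pass each of the three a priori estimates for the regularized minimizer $u^\varepsilon$ to the limit, using the convergences collected in Proposition \ref{prop:convergence_uep1p2bis}. The Lipschitz bound comes from the Fonseca--Fusco estimate \eqref{eq_FF_eps}; the $W^{1,2}$ bound on $|U_{x_j}|^{\alpha-1}U_{x_j}$ for $\alpha=p/2$ comes from the Caccioppoli inequality of Proposition \ref{lm:caccioespilon2}; and the case $\alpha>p/2$ is obtained by the factorization/chain-rule argument used in the proof of Theorem \ref{teo:apriori}. The $W^{1,2}$ regularity of $\nabla U$ itself is essentially free of charge, since it is exactly Proposition \ref{prop:convergence_uep1p2bis}\,(2).

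For \eqref{apriorisub}, I would start from \eqref{eq_FF_eps} written for a ball $B_s$ with $B_{2s}\Subset B$, and pass $\varepsilon\to 0$. Since Proposition \ref{prop:convergence_uep1p2bis}\,(2) gives $\nabla u^\varepsilon\to\nabla U$ strongly in $L^2(B_s)$, after extracting a subsequence the convergence is pointwise a.e., and the uniform Lipschitz bound \eqref{uniformelipsub} combined with $\|U_{x_i}\|_{L^\infty(B_s)}\le \liminf_\varepsilon\|u^\varepsilon_{x_i}\|_{L^\infty(B_s)}$ lets me pass to the limit on the supremum on the left. On the right, strong $L^p$ convergence (consequence of strong $L^2$ convergence and the uniform Lipschitz bound) together with $\varepsilon\to 0$ yield the integral average of $|\nabla U|^p$. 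A simple covering argument then upgrades the ratio $s:2s$ to $R/2:R$, giving \eqref{apriorisub} directly (no scaling trick is needed here, since the limiting estimate is already homogeneous).

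For \eqref{apriorisub2} with $\alpha=p/2$, I would apply \eqref{uniformesobsub2} with $\zeta(t)=t$ and a cut-off $\eta\in C^2_0(B_R)$ equal to $1$ on $B_{R/2}$ with $|\nabla\eta|\le C/R$, $|D^2\eta|\le C/R^2$. Since $(\varepsilon+|\nabla u^\varepsilon|^2)^{(p-1)/2}|u^\varepsilon_{x_j}|\le (\varepsilon+|\nabla u^\varepsilon|^2)^{p/2}$, this yields, for each $i,j$,
\[
\int_{B_{R/2}}(\varepsilon+|u^\varepsilon_{x_i}|^2)^{\frac{p-2}{2}}|u^\varepsilon_{x_ix_j}|^2\,dx\le \frac{C}{R^2}\int_{B_R}(\varepsilon+|\nabla u^\varepsilon|^2)^{\frac{p}{2}}\,dx.
\]
Combining this with the pointwise inequality proved in Proposition \ref{prop:convergence_uep1p2bis} between $|\nabla((\varepsilon+|u^\varepsilon_{x_i}|^2)^{(p-2)/4}u^\varepsilon_{x_i})|^2$ and the weighted Hessian terms gives a uniform $W^{1,2}(B_{R/2})$ bound for $(\varepsilon+|u^\varepsilon_{x_i}|^2)^{(p-2)/4}u^\varepsilon_{x_i}$ of the desired form. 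By Proposition \ref{prop:convergence_uep1p2bis}\,(3) this sequence converges weakly in $W^{1,2}(B_{R/2})$ to $|U_{x_i}|^{(p-2)/2}U_{x_i}$, so lower semicontinuity of the $W^{1,2}$ norm together with the $L^p$ convergence on the right-hand side and $|B_R|\simeq R^2$ yields \eqref{apriorisub2} in the case $\alpha=p/2$.

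The extension to an arbitrary $\alpha\ge p/2$ is then a verbatim repetition of the argument in Theorem \ref{teo:apriori}: I write
\[
|U_{x_i}|^{\alpha-1}U_{x_i}=\Phi\bigl(|U_{x_i}|^{\frac{p-2}{2}}U_{x_i}\bigr),\qquad \Phi(\tau):=|\tau|^{\frac{2\alpha-p}{p}}\tau,
\]
with $\Phi\in C^1(\mathbb{R})$ since $2\alpha-p\ge 0$; combining the case $\alpha=p/2$ just proved, the chain rule, and the Lipschitz bound \eqref{apriorisub} yields both the $W^{1,2}_{\rm loc}$ membership and the quantitative estimate \eqref{apriorisub2}. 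Finally, $\nabla U\in W^{1,2}_{\rm loc}(\Omega;\mathbb{R}^2)$ is precisely the statement of Proposition \ref{prop:convergence_uep1p2bis}\,(2), obtained by passing the uniform bound \eqref{uniformeH2sub} to the limit. I expect the main technical obstacle to be the passage to the limit in the supremum appearing in the Fonseca--Fusco estimate: the argument must combine the uniform Lipschitz bound on $u^\varepsilon$ with pointwise a.e.\ convergence of a subsequence of $\nabla u^\varepsilon$ in order to control the left-hand side, whereas everything else reduces to weak lower semicontinuity in $W^{1,2}$ and the standard chain-rule manipulations.
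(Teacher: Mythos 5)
Your proposal is correct and follows essentially the same route as the paper: the Caccioppoli inequality \eqref{uniformesobsub2} with $\zeta(t)=t$, the pointwise bound on $\left|\nabla\left((\varepsilon+|u^\varepsilon_{x_i}|^2)^{\frac{p-2}{4}}u^\varepsilon_{x_i}\right)\right|^2$ from Proposition \ref{prop:convergence_uep1p2bis}, weak lower semicontinuity for $\alpha=p/2$, and the composition $|U_{x_i}|^{\alpha-1}U_{x_i}=\Phi\left(|U_{x_i}|^{\frac{p-2}{2}}U_{x_i}\right)$ for general $\alpha\ge p/2$. The only cosmetic difference is that the paper obtains \eqref{apriorisub} by invoking \cite[Theorem 2.2]{FF} directly for $U$, whereas you pass the regularized estimate \eqref{eq_FF_eps} to the limit; your remark that no scaling trick is needed here (the limiting estimate being already homogeneous, unlike the case $p>2$) is accurate.
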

\begin{proof}
Local Lipschitz regularity and the scaling invariant estimate \eqref{apriorisub} follow from \cite[Theorem 2.2]{FF}.
\vskip.2cm\noindent
We already know from Proposition \ref{prop:convergence_uep1p2bis} that $|U_{x_i}|^{(p-2)/2}\,U_{x_i} \in W^{1,2}_{\rm loc}(\Omega)$. 
In order to get \eqref{apriorisub2} for $\alpha=p/2$, we first observe that
\[
\left|\nabla \left(\left(\varepsilon+|u^{\varepsilon}_{x_j}|^2\right)^{\frac{p-2}{4}} u^{\varepsilon}_{x_j} \right)\right|^2\leq \left(\varepsilon+|u^{\varepsilon}_{x_j}|^2\right)^{\frac{p-2}{2}}|\nabla u^{\varepsilon}_{x_j}|^2.
\]
We multiply the above inequality with the cut-off function \(\eta^2\) as in \eqref{stimarella}, associated to the balls \(B_{R/2}\Subset B_{R}\). Integrating the resulting inequality, we get
\[
\int_{B_{R/2}} \left|\nabla \left(\left(\varepsilon+|u^{\varepsilon}_{x_j}|^2\right)^{\frac{p-2}{4}} u^{\varepsilon}_{x_j} \right) \right|^2\,dx
\leq 
\int_{B_{R}}\left(\varepsilon+|u^{\varepsilon}_{x_j}|^2\right)^{\frac{p-2}{2}}|\nabla u^{\varepsilon}_{x_j}|^2\eta^2\,dx.
\]
Using \eqref{stimarella}, this implies
\[
\int_{B_{R/2}} \left|\nabla \left(\left(\varepsilon+|u^{\varepsilon}_{x_j}|^2\right)^{\frac{p-2}{4}} u^{\varepsilon}_{x_j}\right) \right|^2\,dx
\leq
\frac{C}{R^2}\int_{B_{R}}\left(\varepsilon+|\nabla u^{\varepsilon}|^2\right)^{\frac{p}{2}}\,dx.
\]
By taking the limit in the previous inequality and using the convergences of Proposition \ref{prop:convergence_uep1p2bis}, we get \eqref{apriorisub2} for $\alpha=p/2$.
\par
The last part of the statement now follows as in Theorem \ref{teo:apriori} above (observe that this time $0<p/2\le 1$).
\end{proof}
\begin{oss}
For later reference, we observe that for every \(k,j=1,2\), 
\begin{equation}\label{eq_chain_rule_weird}
\left( |U_{x_j}|^{\frac{p-2}{2}}U_{x_j}\right)_{x_k} = \frac{p}{2}\,|U_{x_j}|^{\frac{p-2}{2}}U_{x_jx_k}
\qquad \textrm{a.\,e. on } \{U_{x_j}\not=0\}.
\end{equation}
Since the function \(t\mapsto |t|^{\frac{p-2}{2}}t\) is not \(C^1\) for \(1<p<2\), nor locally Lipschitz, the identity \eqref{eq_chain_rule_weird} does not follow from the chain rule in a straightforward way.
We start instead from the following identity, which results from the classical chain rule for smooth functions:
\begin{equation}\label{eq_dresden}
\left(\varepsilon +|u^{\varepsilon}_{x_j}|^2\right)^{\frac{2-p}{4}}\left((\varepsilon + |u^{\varepsilon}_{x_j}|^2)^{\frac{p-2}{4}}u_{x_j}^\varepsilon \right)_{x_k} =\left(\frac{\varepsilon+\dfrac{p}{2}\,|u^{\varepsilon}_{x_j}|^2}{\varepsilon + |u^{\varepsilon}_{x_j}|^2} \right)u^{\varepsilon}_{x_jx_k}.
\end{equation}
In the left-hand side, $(\varepsilon +|u^{\varepsilon}_{x_j}|^2)^{(2-p)/4}$ is uniformly bounded on \(B_R\Subset B\) and converges almost everywhere to \(|U_{x_j}|^{(2-p)/2}\), while
\[
\left((\varepsilon + |u^{\varepsilon}_{x_j}|^2)^{\frac{p-2}{4}}u_{x_j} \right)_{x_k}\ \mbox{ weakly converges in $L^{2}(B_R)$ to }\ \left(|U_{x_j}|^{\frac{p-2}{2}}U_{x_j}\right)_{x_k}.
\] 
Hence, the product converges weakly in \(L^2(B_R)\) to \(|U_{x_j}|^{(2-p)/2}\,( |U_{x_j}|^{(p-2)/2}\,U_{x_j})_{x_k}\). 
\par
A similar argument proves that the right-hand side of \eqref{eq_dresden} converges to \((p/2)\,U_{x_jx_k}\) weakly in \(L^2(B_R)\). We have thus proved that for almost every \(x\in B_R\),
\[
|U_{x_j}|^{\frac{2-p}{2}}\left( |U_{x_j}|^{\frac{p-2}{2}}U_{x_j}\right)_{x_k} = 
\frac{p}{2}\,U_{x_jx_k}.
\]
The identity \eqref{eq_chain_rule_weird} follows at once. 
\end{oss}
As in the case \(p> 2\), we end this subsection on the case \(1<p\le 2\) with two additional results on the solutions \(u^\varepsilon\) of the problem \ref{approximationsub}.
\begin{lm}[A minimum principle, $1<p\le 2$]
\label{lm:minimumbis}
 Let $B_r\Subset B$. With the notation above, we have
\[
u_{x_j}^\varepsilon\ge C,\ \mbox{ on }\partial B_r\qquad \Longrightarrow\qquad u_{x_j}^\varepsilon\ge C,\ \mbox{ in } B_r.
\]
\end{lm}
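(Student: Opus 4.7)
The plan is to mimic the proof of Lemma \ref{lm:minimum} from the degenerate case $p>2$, but to work directly with the function $u^\varepsilon_{x_j}$ rather than with a nonlinear quantity of it. The reason is twofold: in the singular regime $1<p\le 2$ the map $t\mapsto |t|^{(p-2)/2}\,t$ fails to be $C^1$ at the origin, while $u^\varepsilon$ is smooth on $\overline B$ and $u^\varepsilon_{x_j}\in W^{1,2}(B_r)$ by Lemma \ref{lm:lemma1p2}; moreover, because of the regularizing $\varepsilon>0$ inside the nonlinearity, the Hessian of $H_\varepsilon$ is everywhere strictly positive definite, so there is no genuine degeneracy to fight, only a bookkeeping argument.

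First I would differentiate the Euler--Lagrange equation \eqref{nondiff} with respect to $x_j$. A direct computation, completely parallel to the derivation of \eqref{Differentiation_Euler_equation_varepsilon} in the degenerate case, shows that for every test function $\varphi\in W^{1,2}_0(B)$
\[
\sum_{i=1}^2\int_B (\varepsilon+|u^\varepsilon_{x_i}|^2)^{\frac{p-4}{2}}\bigl(\varepsilon+(p-1)\,|u^\varepsilon_{x_i}|^2\bigr)\, u^\varepsilon_{x_ix_j}\,\varphi_{x_i}\,dx=0.
\]
Notice that the weight appearing in the sum is $\ge \min\{1,p-1\}\,(\varepsilon+|u^\varepsilon_{x_i}|^2)^{(p-2)/2}>0$, which is the key ingredient that will make the argument work.

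Next, I would insert the test function
\[
\varphi:=\begin{cases} -(C-u^\varepsilon_{x_j})_+,& \text{in } B_r,\\ 0, & \text{in } B\setminus B_r, \end{cases}
\]
which belongs to $W^{1,2}_0(B)$ thanks to the boundary hypothesis $u^\varepsilon_{x_j}\ge C$ on $\partial B_r$. Since $\nabla \varphi$ equals $(u^\varepsilon_{x_ix_j})_{i=1,2}$ on $\{u^\varepsilon_{x_j}<C\}\cap B_r$ and vanishes elsewhere, the identity above collapses to
\[
\sum_{i=1}^2\int_{\{u^\varepsilon_{x_j}<C\}\cap B_r} (\varepsilon+|u^\varepsilon_{x_i}|^2)^{\frac{p-4}{2}}\bigl(\varepsilon+(p-1)\,|u^\varepsilon_{x_i}|^2\bigr)\,|u^\varepsilon_{x_ix_j}|^2\,dx=0.
\]
Since $p>1$ and $\varepsilon>0$, each summand is non-negative and the integrand is strictly positive where $u^\varepsilon_{x_ix_j}\neq 0$. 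Hence $u^\varepsilon_{x_ix_j}=0$ almost everywhere on $\{u^\varepsilon_{x_j}<C\}\cap B_r$ for both $i=1,2$, i.e.\ $\nabla(C-u^\varepsilon_{x_j})_+=0$ a.e. in $B_r$. As $(C-u^\varepsilon_{x_j})_+$ is then a Sobolev function with zero gradient on the connected set $B_r$ whose trace on $\partial B_r$ vanishes by hypothesis, it must be identically zero in $B_r$, yielding the desired conclusion $u^\varepsilon_{x_j}\ge C$ in $B_r$.

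The only potentially delicate step is the rigorous justification of the differentiated equation, but this is standard thanks to the smoothness of $u^\varepsilon$ on $\overline B$ (one can either argue by difference quotients or directly, since every partial derivative of a compactly supported $C^2$ test function is itself a legitimate test function in \eqref{nondiff}). Compared to the proof of Lemma \ref{lm:minimum}, the present argument is in fact simpler, since working directly with $u^\varepsilon_{x_j}$ bypasses the equivalences \eqref{1} and \eqref{2} and all the manipulations based on the auxiliary function $|u^\varepsilon_{x_j}|^{3(p-2)/4}u^\varepsilon_{x_j}$.
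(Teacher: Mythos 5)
Your proposal is correct and is essentially the paper's own argument: the same differentiated equation with weight $(\varepsilon+|u^\varepsilon_{x_i}|^2)^{(p-4)/2}(\varepsilon+(p-1)|u^\varepsilon_{x_i}|^2)$, the same truncated test function (up to an irrelevant sign), and the same conclusion that $\nabla(C-u^\varepsilon_{x_j})_+=0$ a.e.\ in $B_r$ together with the vanishing boundary trace. Nothing is missing.
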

\begin{proof}
By inserting in \eqref{nondiff} a test function of the form $\varphi_{x_j}$ with $\varphi$ smooth with compact support in $B$ and integrating by parts, we get
\[
\begin{split}
\sum_{i=1}^2\int_{B}  \left((\varepsilon+|u^\varepsilon_{x_i}|^2)^\frac{p-2}{2}\,u^\varepsilon_{x_i}\right)_{x_j}\,\varphi_{x_i}  \,dx=0.
\end{split}
\]
This is the same as
\[
\begin{split}
\sum_{i=1}^2\int_{B}& (\varepsilon+|u^\varepsilon_{x_i}|^2)^\frac{p-2}{2}\,u^\varepsilon_{x_i\,x_j}\,\varphi_{x_i}  \,dx+(p-2)\,\sum_{i=1}^2\int_{B}  (\varepsilon+|u^\varepsilon_{x_i}|^2)^\frac{p-4}{2}\,\left|u^\varepsilon_{x_i}\right|^2\,u_{x_i\,x_j}^\varepsilon\,\varphi_{x_i}  \,dx=0.
\end{split}
\]
By regularity of $u^\varepsilon$, the previous identity is still true for functions $\varphi\in W^{1,2}_0(B)$.
In the previous identity, we insert the test function
\[
\Phi=\left\{\begin{array}{cl}
(C-u^\varepsilon_{x_j})_+&, \mbox{ in } B_r,\\
0&, \mbox{ in } B\setminus B_r,
\end{array}
\right.
\]
which is admissible thanks to the hypothesis on $u^\varepsilon_{x_j}$. We obtain
\[
\begin{split}
-\sum_{i=1}^2&\int_{\left\{x\in B_r\, :\, u_{x_j}^\varepsilon \le C\right\}} (\varepsilon+|u^\varepsilon_{x_i}|^2)^\frac{p-2}{2}\,\left|u^\varepsilon_{x_i\,x_j}\right|^2\,dx\\
&-(p-2)\,\sum_{i=1}^2\int_{\left\{x\in B_r\, :\, u_{x_j}^\varepsilon \le C\right\}}  (\varepsilon+|u^\varepsilon_{x_i}|^2)^\frac{p-4}{2}\,\left|u^\varepsilon_{x_i}\right|^2\,\left|u^\varepsilon_{x_i\,x_j}\right|^2 \,dx=0.
\end{split}
\]
The previous can be rewritten as
\[
\sum_{i=1}^2\int_{\left\{x\in B_r\, :\, u_{x_j}^\varepsilon \le C\right\}} (\varepsilon+|u^\varepsilon_{x_i}|^2)^\frac{p-4}{2}\,(\varepsilon+(p-1)\,|u_{x_i}^\varepsilon|^2)\,\left|u^\varepsilon_{x_j\,x_i}\right|^2\,dx=0,
\]
which in turn implies
\[
\sum_{i=1}^2\int_{\left\{x\in B_r\, :\, u_{x_j}^\varepsilon \le C\right\}} \left|u^\varepsilon_{x_j\,x_i}\right|^2\,dx=0,\quad \mbox{ i.\,e. }\int_{\left\{x\in B_r\, :\, u_{x_j}^\varepsilon \le C\right\}} \left|\nabla u^\varepsilon_{x_j}\right|^2\,dx=0.
\]
From this identity, we get that the Sobolev function
\[
(C-u^\varepsilon_{x_j})_+,
\]
is constant in $B_r$ and thanks to the fact that $u^\varepsilon_{x_j}\ge C$ on $\partial B_r$, we get
\[
(C-u^\varepsilon_{x_j})_+=0,\qquad \mbox{ in } B_r,
\]
as desired.
\end{proof}

\begin{lm}
\label{lm:traccebis}
Let $B_r\Subset B$. With the notation above, there exists a sequence \(\{\varepsilon_k\}_{k\in \mathbb{N}}\) such that for almost every $s\in[0,r]$, we have
\[
\lim_{k\to +\infty}\|u^{\varepsilon_k}_{x_j}-U_{x_j}\|_{L^\infty(\partial B_s)}=0,\qquad j=1,2.
\]
\end{lm}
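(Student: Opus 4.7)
The proof is essentially identical to that of Lemma \ref{lm:tracce} in the case $p>2$, with the simplification that now we can work directly with the gradient components rather than with their nonlinear transformations. The plan is to exploit the weak $W^{1,2}$ convergence of the gradients provided by Proposition \ref{prop:convergence_uep1p2bis}, transfer it to a fractional Sobolev convergence on almost every circle via a Fubini-type argument, and then conclude by a one-dimensional Sobolev embedding.

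More precisely, the first step is to observe that by Proposition \ref{prop:convergence_uep1p2bis}, the sequence
\[
\{u^{\varepsilon}_{x_j}-U_{x_j}\}_{0<\varepsilon<\varepsilon_0}
\]
converges weakly to $0$ in $W^{1,2}(B_r)$. Hence, by compactness of the embedding $W^{1,2}(B_r)\hookrightarrow W^{\tau,2}(B_r)$ for every $0<\tau<1$, we may extract a subsequence that converges strongly to $0$ in $W^{\tau,2}(B_r)$.

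Next, fix $\tau$ with $1/2<\tau<1$. Applying Lemma \ref{lm:dellafrancesca} (the same slicing result used in the proof of Lemma \ref{lm:tracce}), we can further extract a subsequence $\{\varepsilon_k\}_{k\in\mathbb{N}}$ such that for almost every $s\in[0,r]$,
\[
\lim_{k\to+\infty}\|u^{\varepsilon_k}_{x_j}-U_{x_j}\|_{W^{\tau,2}(\partial B_s)}=0, \qquad j=1,2.
\]
A standard diagonal extraction ensures that the same subsequence works simultaneously for both indices $j=1,2$.

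Finally, since $\partial B_s$ is one-dimensional and $\tau>1/2$, the embedding $W^{\tau,2}(\partial B_s)\hookrightarrow C^0(\partial B_s)$ is continuous (see \cite[Theorem 7.57]{Ad}). This immediately yields the claimed uniform convergence on $\partial B_s$ for almost every $s\in[0,r]$. No real obstacle arises here: the only subtlety, already handled in the $p>2$ case, is the slicing on circles, which is taken care of by Lemma \ref{lm:dellafrancesca}; the reason the proof is in fact simpler than for Lemma \ref{lm:tracce} is that we do not need to compose with the singular map $t\mapsto|t|^{(p-2)/2}t$, since $\nabla u^\varepsilon$ itself already lies in $W^{1,2}_{\mathrm{loc}}$ uniformly in $\varepsilon$.
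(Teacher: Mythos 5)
Your proof is correct and follows exactly the paper's argument: weak $W^{1,2}$ convergence from Proposition \ref{prop:convergence_uep1p2bis}, compact embedding into $W^{\tau,2}(B_r)$ with $1/2<\tau<1$, slicing onto circles via Lemma \ref{lm:dellafrancesca}, and the one-dimensional embedding $W^{\tau,2}(\partial B_s)\hookrightarrow C^0(\partial B_s)$. The paper's own proof is precisely this reduction to the argument of Lemma \ref{lm:tracce}, so nothing further is needed.
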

\begin{proof}
Observe that $\{u^\varepsilon_{x_j}-U_{x_j}\}_{0<\varepsilon<\varepsilon_0}$ weakly converges to \(0\) in $W^{1,2}(B_r)$, thanks to Proposition \ref{prop:convergence_uep1p2bis}. The proof then runs similarly to that of Lemma \ref{lm:tracce}.
\end{proof}

\section{Caccioppoli inequalities}
\label{sec:3}

\subsection{The case $p>2$}
One of the key ingredients in the proof of Theorem \ref{teo:theorem-continuity-N2} for $p>2$ is the following ``weird'' Caccioppoli inequality for the gradient of the local minimizer $U$. Observe that the inequality contains quantities like the product of different components of $\nabla U$.
\begin{prop}
\label{prop:mixing}
Let \(\Phi: \mathbb{R}\to \mathbb{R}\) be a \(C^2\) function such that \(\Phi\,\Phi''\geq 0\) and \(\zeta : \mathbb{R}\to \mathbb{R}^+\) be a nonnegative convex function. For every $B \Subset \Omega$, every \(\eta \in C^{\infty}_0(B)\) and every \(j, k\in\{1, 2\}\), 
\begin{equation}
\label{eq_Cacciopoli_inequality_nonsmooth}
\begin{split}
\sum_{i=1}^2 \int \left|\left(|U_{x_i}|^\frac{p-2}{2}\,U_{x_i}\right)_{x_k}\right|^2\, [\Phi'(U_{x_k})]^2\, \zeta(U_{x_j})\,\eta^2 \,dx
&\leq 
C \left( \sum_{i=1}^2 \int |U_{x_i}|^{p-2}\,\Phi(U_{x_k})^4\, |\eta_{x_i}|^2\,dx\right)^{\frac{1}{2}}\\
&\times\left( \sum_{i=1}^2 \int |U_{x_i}|^{p-2}\, \zeta(U_{x_j})^2\, |\eta_{x_i}|^2\,dx\right)^{\frac{1}{2}}.
\end{split}
\end{equation}
\end{prop}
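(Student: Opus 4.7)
My plan is to establish the inequality first at the level of the approximating problem \eqref{approximation} and then pass to the limit via Proposition \ref{lm_convergence_ueps} and Lemma \ref{lm:schwarz2}. The central device is the auxiliary function
\[
G(t) := \int_0^t [\Phi'(s)]^2\,ds, \qquad H(t) := \int_0^t G(s)\,ds,
\]
for which the hypothesis $\Phi\,\Phi''\geq 0$, combined with the identity $G(t) = \Phi(t)\,\Phi'(t) - \int_0^t \Phi\,\Phi''$, yields the two pointwise comparisons
\[
|G(t)|\leq |\Phi(t)\,\Phi'(t)|, \qquad H(t)\geq 0 \;\;\text{and}\;\; H(t)\leq C\,\Phi(t)^2,
\]
(the second line comes from integrating the first). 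These, together with the convexity of $\zeta$, are the workhorses of the whole argument.

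\textbf{Main identity and cross-term control.} I differentiate \eqref{Differentiation_Euler_equation_varepsilon} a second time in the direction $x_k$, so that for any admissible $\varphi$
\[
\sum_{i=1}^2\int A_i^\varepsilon\,u^\varepsilon_{x_i x_k}\,\varphi_{x_i}\,dx = 0, \qquad A_i^\varepsilon := |u^\varepsilon_{x_i}|^{p-2}+\varepsilon,
\]
and plug in the test function $\varphi := G(u^\varepsilon_{x_k})\,\zeta(u^\varepsilon_{x_j})\,\eta^2$. (First assume $\zeta\in C^2$ and $\Phi'$ bounded; the general case is recovered at the end by approximation.) Expanding $\varphi_{x_i}$ produces three contributions: the principal term
\[
I^\varepsilon := \sum_{i=1}^2 \int A_i^\varepsilon\,|u^\varepsilon_{x_i x_k}|^2\,[\Phi'(u^\varepsilon_{x_k})]^2\,\zeta(u^\varepsilon_{x_j})\,\eta^2\,dx,
\]
which is exactly the $\varepsilon$-level version of the left-hand side of \eqref{eq_Cacciopoli_inequality_nonsmooth}; a cutoff piece $I^\varepsilon_{\mathrm{bdry}}$ involving $\eta\,\eta_{x_i}$; and the mixing piece
\[
I^\varepsilon_{\mathrm{cross}} := \sum_{i=1}^2 \int A_i^\varepsilon\,u^\varepsilon_{x_i x_k}\,G(u^\varepsilon_{x_k})\,\zeta'(u^\varepsilon_{x_j})\,u^\varepsilon_{x_j x_i}\,\eta^2\,dx.
\]
To dispose of $I^\varepsilon_{\mathrm{cross}}$---the true difficulty---I test the $x_j$-differentiated Euler--Lagrange equation against $\psi := H(u^\varepsilon_{x_k})\,\zeta'(u^\varepsilon_{x_j})\,\eta^2$. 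Since $H'=G$ and the mixed second derivatives are symmetric, the resulting identity rewrites
\[
I^\varepsilon_{\mathrm{cross}} = -\sum_{i=1}^2\int A_i^\varepsilon\,|u^\varepsilon_{x_i x_j}|^2\,H(u^\varepsilon_{x_k})\,\zeta''(u^\varepsilon_{x_j})\,\eta^2\,dx - 2\sum_{i=1}^2\int A_i^\varepsilon\,u^\varepsilon_{x_i x_j}\,H(u^\varepsilon_{x_k})\,\zeta'(u^\varepsilon_{x_j})\,\eta\,\eta_{x_i}\,dx.
\]
The first term on the right is $\leq 0$ thanks to $H\geq 0$ and $\zeta''\geq 0$; the second is a boundary-type term.

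\textbf{Conclusion.} Combining $I^\varepsilon_{\mathrm{bdry}}$ with the remaining boundary term coming from the cross-term manipulation, Young's inequality (weighted by $A_i^\varepsilon$ and $\zeta$, with small parameter $\tau$), used in conjunction with the pointwise bounds $|G|\leq|\Phi\Phi'|$ and $H\leq C\,\Phi^2$, allows one to reabsorb a small multiple of $I^\varepsilon$ into itself and leaves
\[
I^\varepsilon \leq C\,\sum_{i=1}^2\int A_i^\varepsilon\,\Phi(u^\varepsilon_{x_k})^2\,\zeta(u^\varepsilon_{x_j})\,|\eta_{x_i}|^2\,dx.
\]
A final Cauchy--Schwarz, splitting each summand as $\bigl[(A_i^\varepsilon)^{1/2}\Phi^2\,|\eta_{x_i}|\bigr]\cdot\bigl[(A_i^\varepsilon)^{1/2}\,\zeta\,|\eta_{x_i}|\bigr]$, yields the product form in \eqref{eq_Cacciopoli_inequality_nonsmooth} at level $\varepsilon$. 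Then I let $\varepsilon\to 0$: Fatou's lemma on the left, together with the weak $W^{1,2}$ convergence of $|u^\varepsilon_{x_i}|^{(p-2)/2}u^\varepsilon_{x_i}$ from Proposition \ref{lm_convergence_ueps} (and the identification \eqref{dellafrancesca}), produces the correct LHS; on the right, the uniform Lipschitz bound of Lemma \ref{lm:regularity} together with dominated convergence suffices. A final approximation step removes the provisional smoothness assumptions on $\zeta$ and $\Phi$. The main obstacle is precisely $I^\varepsilon_{\mathrm{cross}}$: without the hypothesis $\Phi\,\Phi''\geq 0$ (which makes both $|G|\leq |\Phi\Phi'|$ and $H\geq 0$ available) one could not align the boundary errors with the $\Phi^2\,\zeta\,|\eta_{x_i}|^2$ structure and reabsorb them into $I^\varepsilon$.
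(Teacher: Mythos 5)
Your overall skeleton (regularize, test the differentiated equation \eqref{Differentiation_Euler_equation_varepsilon}, absorb, pass to the limit via Proposition \ref{lm_convergence_ueps}) is the right one, and your limit argument is essentially the paper's. But the way you treat the cross term does not close, for two reasons. First, your ``workhorse'' comparisons are false for admissible $\Phi$. The correct identity is $G(t)=\Phi(t)\Phi'(t)-\Phi(0)\Phi'(0)-\int_0^t\Phi\,\Phi''$, and the constant $\Phi(0)\Phi'(0)$ cannot be discarded: for $\Phi(t)=e^t$ (which satisfies $\Phi\,\Phi''=e^{2t}\ge 0$) one has $G(t)=\tfrac12(e^{2t}-1)$, so $|G(t)|\to \tfrac12$ as $t\to-\infty$ while $|\Phi(t)\Phi'(t)|=e^{2t}\to 0$; likewise $H(t)\to+\infty$ while $\Phi(t)^2\to 0$. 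So neither $|G|\le|\Phi\Phi'|$ nor $H\le C\,\Phi^2$ holds in the stated generality (they would require something like $\Phi(0)\Phi'(0)=0$, which is not assumed). Second, even granting those bounds, after your second testing the leftover term $-2\sum_i\int A_i^\varepsilon\,u^\varepsilon_{x_ix_j}\,H(u^\varepsilon_{x_k})\,\zeta'(u^\varepsilon_{x_j})\,\eta\,\eta_{x_i}\,dx$ contains second derivatives in the $x_j$ direction weighted by $\zeta'$, and there is nothing on your left-hand side $I^\varepsilon$ (which controls $x_k$-derivatives weighted by $[\Phi']^2\zeta$) to absorb it into; absorbing into the sponge $\sum_i\int A_i^\varepsilon|u^\varepsilon_{x_ix_j}|^2H\,\zeta''\,\eta^2$ would force a bound of the type $(\zeta')^2\le C\,\zeta\,\zeta''$, which fails for general nonnegative convex $\zeta$ (e.g.\ any convex function with a linear piece). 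This is precisely the step you wave through with ``Young's inequality \dots allows one to reabsorb''.

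For comparison, the paper avoids the second testing entirely: it tests once with $\Psi(u^\varepsilon_{x_k})\,\zeta(u^\varepsilon_{x_j})\,\eta^2$, $\Psi=\Phi\,\Phi'$, uses $\Psi'\ge(\Phi')^2$ (the derivative-level consequence of $\Phi\,\Phi''\ge0$, which is true, unlike its integrated version) to absorb the $\eta\,\eta_{x_i}$ piece, and then bounds the cross term by Cauchy--Schwarz, recognizing $u_{x_ix_k}^2\Psi(u_{x_k})^2=\tfrac14\big|(\Phi(u_{x_k})^2)_{x_i}\big|^2$ and applying the elementary Caccioppoli inequality \eqref{eq_old_Cacciopoli} (the case $\zeta\equiv1$ of the same computation) once with $\Phi^2$ in place of $\Phi$ and once with $\zeta$ in place of $\Phi$. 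That is what produces the product structure of the right-hand side of \eqref{eq_Cacciopoli_inequality_nonsmooth}. If you want to salvage your route, you would at minimum need to normalize $G$ so that it vanishes where $\Phi\Phi'$ does and to replace the reabsorption step by an argument that works for merely convex $\zeta$; as written, the proof has a genuine gap.
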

\begin{proof}
By a standard approximation argument, one can assume \(\zeta\) to be a smooth function.
We fix \(\varepsilon>0\) and we take as above $u^\varepsilon$ the minimizer of \eqref{approximation}, subject to the boundary condition $u^\varepsilon-U^\varepsilon \in W^{1,p}_0(B)$. We divide the proof in two parts: we first show \eqref{eq_Cacciopoli_inequality_nonsmooth} for $u^\varepsilon$ and then prove that we can take the limit.
\vskip.2cm\noindent
{\bf Caccioppoli for $u^\varepsilon$.} We plug into \eqref{Differentiation_Euler_equation_varepsilon} the test function
\[
\varphi=\Psi(u^\varepsilon_{x_k})\,\zeta(u^\varepsilon_{x_j})\,\eta^2,\qquad \mbox{ where }\Psi(t)=\Phi(t)\,\Phi'(t),
\]  
where $\eta$ is as in the statement. In order to simplify the notation, we write \(u\) in place of \(u^\varepsilon\) in what follows.
Since 
\[
\varphi_{x_i}=u_{x_k\, x_i}\Psi'(u_{x_k})\,\zeta(u_{x_j})\,\eta^2 + \Psi(u_{x_k})\,\left(\zeta(u_{x_j})\right)_{x_i}\eta^2 + 2\,\eta\,\eta_{x_i}\,\Psi(u_{x_k})\,\zeta(u_{x_j}),
\]  
we obtain
\begin{equation}
\label{debut}
\begin{split}
\sum_{i=1}^{2}&\int(|u_{x_i}|^{p-2}+\varepsilon)\,u_{x_i\,x_k}^2\,\Psi'(u_{x_k})\,\zeta(u_{x_j})\,\eta^2\,dx\\
&=-\sum_{i=1}^{2}\int(|u_{x_i}|^{p-2}+\varepsilon)\,u_{x_i x_k}\,\Psi(u_{x_k})\,\left(\zeta(u_{x_j})\right)_{x_i}\,\eta^2\,dx\\
& -2\sum_{i=1}^{2}\int(|u_{x_i}|^{p-2}+\varepsilon)\,u_{x_i x_k}\,\Psi(u_{x_k})\,\zeta(u_{x_j})\,\eta\,\eta_{x_i}\,dx.
\end{split}
\end{equation}
For the second term in the right-hand side,  the Young inequality implies
\[
\begin{split}
2\,\int (|u_{x_i}|^{p-2}+\varepsilon)\,u_{x_i x_k}\,\Psi(u_{x_k})\,\zeta(u_{x_j})\,\eta\,\eta_{x_i}\,dx\leq &\frac{1}{2}\int (|u_{x_i}|^{p-2}+\varepsilon)\,u_{x_i x_k}^2\,\Phi'(u_{x_k})^2\,\zeta(u_{x_j})\,\eta^2\,dx\\
&+ 2\,\int(|u_{x_i}|^{p-2}+\varepsilon)\,\Phi(u_{x_k})^2\,\zeta(u_{x_j})\,\eta_{x_i}^2\,dx,
\end{split}
\]
where we used the definition of $\Psi$. The first term can be absorbed in the left-hand side of \eqref{debut}, thanks to the fact that 
\[
\Psi'=(\Phi\,\Phi')'=\Phi'^2 + \Phi\,\Phi''\geq \Phi'^2.
\]
Hence, for the moment we have obtained
\begin{equation}
\begin{split}
\label{eq1317}
\sum_{i=1}^{2}&\int(|u_{x_i}|^{p-2}+\varepsilon)\,u_{x_i x_k}^2\,\Phi'(u_{x_k})^2\,\zeta(u_{x_j})\,\eta^2\,dx\\&\leq 2 \sum_{i=1}^{2}\int (|u_{x_i}|^{p-2}+\varepsilon)\,|u_{x_i x_k}|\,|\Psi(u_{x_k})|\,\left|\left(\zeta(u_{x_j})\right)_{x_i}\right|\,\eta^2\,dx\\
&+4\, \sum_{i=1}^{2}\int (|u_{x_i}|^{p-2}+\varepsilon)\,\Phi(u_{x_k})^2\zeta(u_{x_j})\,\eta_{x_i}^2\,dx.
\end{split}
\end{equation}
In the particular case when \(\zeta\equiv 1\), we observe for later use that
\begin{equation}
\begin{split}
\label{eq_old_Cacciopoli}
\sum_{i=1}^{2}\int(|u_{x_i}|^{p-2}+\varepsilon)\,\left|\left(\Phi(u_{x_k})\right)_{x_i}\right|^2\,\eta^2\,dx
&=\sum_{i=1}^{2}\int (|u_{x_i}|^{p-2}+\varepsilon)\,u_{x_i x_k}^2\,\Phi'(u_{x_k})^2\,\eta^2\,dx\\
&\leq 4 \sum_{i=1}^{2}\int(|u_{x_i}|^{p-2}+\varepsilon)\,\Phi(u_{x_k})^2\,\eta_{x_i}^2\,dx.
\end{split}
\end{equation}
We go back to \eqref{eq1317}. By H\"older inequality, we can estimate the last term of the right-hand side:
\begin{equation}
\label{eq1331}
\begin{split}
\sum_{i=1}^{2}\int(|u_{x_i}|^{p-2}+\varepsilon)\,\Phi(u_{x_k})^2\,\zeta(u_{x_j})\,\eta_{x_i}^2\,dx&\leq \left( \sum_{i=1}^{2}\int (|u_{x_i}|^{p-2}+\varepsilon)\,\Phi(u_{x_k})^4\,\eta_{x_i}^2\,dx\right)^{\frac{1}{2}}\\
&\times\left(\sum_{i=1}^{2}\int(|u_{x_i}|^{p-2}+\varepsilon)\,\zeta(u_{x_j})^2\,\eta_{x_i}^2\,dx\right)^{\frac{1}{2}}.
\end{split}
\end{equation}
In a similar fashion, for the first term in the right-hand side of \eqref{eq1317}, we have
\begin{equation}
\begin{split}
\label{eq1340}
\sum_{i=1}^{2}&\int (|u_{x_i}|^{p-2}+\varepsilon)\,|u_{x_i x_k}|\,|\Psi(u_{x_k})|\,\left|\left(\zeta(u_{x_j})\right)_{x_i}\right|\,\eta^2\,dx\\
&\leq \left(\sum_{i=1}^{2} \int (|u_{x_i}|^{p-2}+\varepsilon)\,u_{x_i x_k}^2\,\Psi(u_{x_k})^2\eta^2\,dx\right)^{\frac{1}{2}}\,\left(\sum_{i=1}^{2} \int (|u_{x_i}|^{p-2}+\varepsilon)\,\left|\left(\zeta(u_{x_j})\right)_{x_i}\right|^2\,\eta^2\,dx\right)^{\frac{1}{2}}\\ 
&= \frac{1}{2}\,\left(\sum_{i=1}^{2} \int(|u_{x_i}|^{p-2}+\varepsilon)\left|\left(\Phi(u_{x_k})^2\right)_{x_i}\right|^2
\eta^2\,dx\right)^{\frac{1}{2}}
\left(\sum_{i=1}^{2} \int (|u_{x_i}|^{p-2}+\varepsilon)\,\left|\left(\zeta(u_{x_j})\right)_{x_i}\right|^2\,\eta^2\,dx\right)^{\frac{1}{2}}.
\end{split}
\end{equation}
In the last equality, we have used the fact that 
\[
u_{x_i x_k}^2\,\Psi(u_{x_k})^2=\frac{1}{4}\,\left(\left(\Phi(u_{x_k})^2\right)_{x_i}\right)^2.
\]
It follows from \eqref{eq1317}, \eqref{eq1331} and \eqref{eq1340} that
\[
\begin{split}
\sum_{i=1}^{2}&\int(|u_{x_i}|^{p-2}+\varepsilon)u_{x_i x_k}^2\,\Phi'(u_{x_k})^2\zeta(u_{x_j})\eta^2
\\ 
&\leq \left(\sum_{i=1}^{2} \int(|u_{x_i}|^{p-2}+\varepsilon)\,\left|\left(\Phi(u_{x_k})^2\right)_{x_i}\right|^2
\eta^2\,dx\right)^{\frac{1}{2}}
\left(\sum_{i=1}^{2} \int (|u_{x_i}|^{p-2}+\varepsilon)\,\left|\left(\zeta(u_{x_j})\right)_{x_i}\right|^2\,\eta^2\,dx\right)^{\frac{1}{2}}\\
&+4\, \left(\sum_{i=1}^{2} \int(|u_{x_i}|^{p-2}+\varepsilon)\,\Phi(u_{x_k})^4\,\eta_{x_i}^2\,dx\right)^{\frac{1}{2}}
\,\left(\sum_{i=1}^{2} \int(|u_{x_i}|^{p-2}+\varepsilon)\,\zeta(u_{x_j})^2\,\eta_{x_i}^2\,dx\right)^{\frac{1}{2}}.
\end{split}
\]
By \eqref{eq_old_Cacciopoli} with\footnote{Observe that $\Phi^2$ still verifies $\Phi^2\,(\Phi^2)''\ge 0$. Indeed, $(\Phi^2)''=2\,(\Phi')^2+2\,\Phi\,\Phi''\ge 0$, by hypothesis.} \(\Phi^2\) in place of \(\Phi\), one has 
\[
\sum_{i=1}^{2} \int(|u_{x_i}|^{p-2}+\varepsilon)\left|\left(\Phi(u_{x_k})^2\right)_{x_i}\right|^2
\eta^2\,dx
\leq 4\,\sum_{i=1}^2\int (|u_{x_i}|^{p-2}+\varepsilon)\,\Phi(u_{x_k})^4\,\eta_{x_i}^2\,dx.
\]
Similarly, by using \eqref{eq_old_Cacciopoli} with \(\zeta\) in place of \(\Phi\) and \(j\) in place  of \(k\),
\[
\sum_{i=1}^2\int (|u_{x_i}|^{p-2}+\varepsilon)\,\left|\left(\zeta(u_{x_j})\right)_{x_i}\right|^2\,\eta^2 \,dx\leq 4\,\sum_{i=1}^2\int (|u_{x_i}|^{p-2}+\varepsilon)\,\zeta(u_{x_j})^2\,\eta_{x_i}^2\,dx.
\]
Hence, we have obtained
\[
\begin{split}
\sum_{i=1}^{2}&\int(|u_{x_i}|^{p-2}+\varepsilon)\,u_{x_i x_k}^2\,\Phi'(u_{x_k})^2\,\zeta(u_{x_j})\,\eta^2\,dx\\
&\leq C\, \left(\int\sum_{i=1}^2 (|u_{x_i}|^{p-2}+\varepsilon)\,\Phi(u_{x_k})^4\,\eta_{x_i}^2\,dx\right)^{\frac{1}{2}}\,\left(\int\sum_{i=1}^2 (|u_{x_i}|^{p-2}+\varepsilon)\,\zeta(u_{x_j})^2\,\eta_{x_i}^2\,dx\right)^{\frac{1}{2}},
\end{split}
\]
for some universal constant $C>0$.
We now observe that 
\[
(|u_{x_i}|^{p-2}+\varepsilon)\,u_{x_i x_k}^2 \geq |u_{x_i}|^{p-2}u_{x_i x_k}^2=\frac{4}{p^2}\,\left|\left(|u_{x_i}|^\frac{p-2}{2}\,u_{x_i}\right)_{x_k}\right|^2,
\] 
thus, by restoring the original notation \(u^{\varepsilon}\), we get
\begin{equation}
\label{prefinal}
\begin{split}
\sum_{i=1}^{2}&\int \left|\left(|u^\varepsilon_{x_i}|^\frac{p-2}{2}\,u^\varepsilon_{x_i}\right)_{x_k}\right|^2\,\Phi'(u_{x_k}^{\varepsilon})^2\,\zeta(u_{x_j}^\varepsilon)\,\eta^2\,dx\\
&\leq C\, \left(\sum_{i=1}^2\int (|u_{x_i}^\varepsilon|^{p-2}+\varepsilon)\,\Phi(u_{x_k}^\varepsilon)^4\,\eta_{x_i}^2\,dx\right)^{\frac{1}{2}}\left(\sum_{i=1}^2\int  (|u_{x_i}^\varepsilon|^{p-2}+\varepsilon)\,\zeta(u_{x_j}^\varepsilon)^2\,\eta_{x_i}^2\,dx\right)^{\frac{1}{2}}.
\end{split}
\end{equation}
{\bf Passing to the limit $\varepsilon\to 0$.} 
By Lemma \ref{lm:regularity}, for every $B_r\Subset B$ the gradient \(\nabla u^\varepsilon\) is uniformly bounded in \(L^{\infty}(B_r)\). Moreover, by Proposition \ref{lm_convergence_ueps}, up to a subsequence (we do not relabel), it converges almost everywhere to \(\nabla U\). 
By recalling that $\eta$ has compact support in $B$, then the Dominated Convergence Theorem implies that the right-hand side of \eqref{prefinal} converges to the corresponding quantity with \(U\) in place of \(u^{\varepsilon}\) and \(\varepsilon=0\). 
\par
As for the left-hand side, we use the fact that for a subsequence (still denoted by \(u^{\varepsilon}\))
\[
\left\|\Phi'(u_{x_k}^{\varepsilon})\,\sqrt{\zeta(u_{x_j}^\varepsilon)}\,\eta\right\|_{L^\infty(\mathrm{spt}(\eta))}\le C,\qquad \Phi'(u_{x_k}^{\varepsilon})\,\sqrt{\zeta(u_{x_j}^\varepsilon)}\,\eta\to \Phi'(U_{x_k})\,\sqrt{\zeta(U_{x_j})}\,\eta,\ \mbox{ a.\,e.},
\] 
and that 
\[
|u^\varepsilon_{x_i}|^\frac{p-2}{2}\,u^\varepsilon_{x_i}\ \mbox{ weakly converges in $W^{1,2}(\mathrm{spt}(\eta))$ to }\ |U_{x_i}|^\frac{p-2}{2}\,U_{x_i}.
\]
still by Proposition \ref{lm_convergence_ueps}. Hence, we can infer weak convergence in \(L^{2}(\mathrm{spt}(\eta))\) of 
\[
\big(|u^\varepsilon_{x_i}|^\frac{p-2}{2}\,u^\varepsilon_{x_i}\big)_{x_k} \,\Phi'(u_{x_k}^{\varepsilon})\,\sqrt{\zeta(u_{x_j}^\varepsilon)}\,\eta.
\]
Finally, by semicontinuity of the norm with respect to weak convergence, one gets
\[
\int \left|\left(|U_{x_i}|^\frac{p-2}{2}\,U_{x_i}\right)_{x_k}\right|^2\,\Phi'(U_{x_k})^2\,\zeta(U_{x_j})\,\eta^2\,dx
\leq \liminf_{\varepsilon\to 0}\int \left|\left(|u^\varepsilon_{x_i}|^\frac{p-2}{2}\,u^\varepsilon_{x_i}\right)_{x_k}\right|^2\,\Phi'(u_{x_k}^{\varepsilon})^2\,\zeta(u_{x_j}^\varepsilon)\,\eta^2\,dx.
\]
This yields the desired estimate \eqref{eq_Cacciopoli_inequality_nonsmooth} for $U$.
\end{proof}

\subsection{The case $1<p\le 2$}

In this case, the Caccioppoli inequality we need is more standard.
\begin{prop}\label{cacciople2}
Let $\zeta:\mathbb{R}\to \mathbb{R}$ be a $C^1$ monotone function. For every $B\Subset \Omega$, every \(\eta \in C^{\infty}_0(B)\) and every $j=1,2$ we have
\begin{equation}
\label{uniformesobsub2limit}
\begin{split}
\sum_{i=1}^2\int_{\{U_{x_i}\not=0\}} & |U_{x_i}|^{p-2}\,\left|\left(Z(U_{x_j})\right)_{x_i}\right|^2\,\eta^2\,dx\\
&\leq C\int |\nabla U|^{p-1}\,\Big(|\nabla U|\,|\zeta'(U_{x_j})|+|\zeta(U_{x_j})|\Big)\,\Big(|\nabla \eta|^2+|D^2\eta|\Big)\,dx,
\end{split}
\end{equation}
where $Z:\mathbb{R}\to\mathbb{R}$ is the $C^1$ function defined by
\begin{equation}
\label{zetona}
Z(t)=\int_0^t \sqrt{|\zeta'(s)|}\,ds.
\end{equation}
\end{prop}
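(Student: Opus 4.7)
The plan is to obtain Proposition \ref{cacciople2} by applying the regularized Caccioppoli inequality of Proposition \ref{lm:caccioespilon2} to the smooth solutions $u^\varepsilon$ of \eqref{approximationsub}, and then pass to the limit $\varepsilon\to 0$ with the help of the uniform estimates from Lemma \ref{lm:lemma1p2} and the convergence result of Proposition \ref{prop:convergence_uep1p2bis}. Since $u^\varepsilon$ is smooth and $Z\in C^1$ with $Z'(t)^2=|\zeta'(t)|$, the usual chain rule yields $|\zeta'(u^\varepsilon_{x_j})|\,|u^\varepsilon_{x_j\,x_i}|^2=|(Z(u^\varepsilon_{x_j}))_{x_i}|^2$, so that \eqref{uniformesobsub2} applied to $u^\varepsilon$ already has the desired structure on the left-hand side.

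For the right-hand side, I would combine the uniform Lipschitz bound \eqref{uniformelipsub} on $\nabla u^\varepsilon$ with the pointwise convergence $\nabla u^\varepsilon\to \nabla U$ almost everywhere supplied by Proposition \ref{prop:convergence_uep1p2bis}, and apply the Dominated Convergence Theorem to both terms in the RHS of \eqref{uniformesobsub2}. This gives convergence to
\[
C\int |\nabla U|^p\,|\zeta'(U_{x_j})|\,|\nabla \eta|^2\,dx+C\int |\nabla U|^{p-1}\,|\zeta(U_{x_j})|\,\bigl(|\nabla \eta|^2+|D^2\eta|\bigr)\,dx,
\]
which is bounded by the RHS of \eqref{uniformesobsub2limit} by factoring $|\nabla U|^p=|\nabla U|\cdot|\nabla U|^{p-1}$.

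The main obstacle is the passage to the limit on the left-hand side: the factor $(\varepsilon+|u^\varepsilon_{x_i}|^2)^{(p-2)/2}$ has a negative exponent and need not be uniformly bounded as $\varepsilon\to 0$ on the set $\{U_{x_i}=0\}$, which is precisely why \eqref{uniformesobsub2limit} restricts the domain of integration. To get around this, I would fix $\delta>0$, set $E_\delta:=\{|U_{x_i}|>\delta\}$, and invoke Egoroff's theorem on $E_\delta\cap \mathrm{spt}(\eta)$: for every $\mu>0$ there exists $F_\mu$ with $|F_\mu|<\mu$ such that $u^\varepsilon_{x_i}\to U_{x_i}$ uniformly on $E_{\delta,\mu}:=(E_\delta\cap \mathrm{spt}(\eta))\setminus F_\mu$. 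In particular, for $\varepsilon$ small enough one has $|u^\varepsilon_{x_i}|\ge \delta/2$ on $E_{\delta,\mu}$, which makes the sequence $(\varepsilon+|u^\varepsilon_{x_i}|^2)^{(p-2)/4}\sqrt{|\zeta'(u^\varepsilon_{x_j})|}\,\eta$ uniformly bounded and uniformly convergent to $|U_{x_i}|^{(p-2)/2}\sqrt{|\zeta'(U_{x_j})|}\,\eta$ there.

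Combining this strong (uniform) convergence of the ``coefficient'' with the weak convergence of $u^\varepsilon_{x_j\,x_i}$ to $U_{x_j\,x_i}$ in $L^2_{\rm loc}$ furnished by Proposition \ref{prop:convergence_uep1p2bis}, the product $\chi_{E_{\delta,\mu}}\,(\varepsilon+|u^\varepsilon_{x_i}|^2)^{(p-2)/4}\,\sqrt{|\zeta'(u^\varepsilon_{x_j})|}\,u^\varepsilon_{x_j\,x_i}\,\eta$ converges weakly in $L^2$ to its natural limit. Lower semicontinuity of the $L^2$-norm squared then gives
\[
\int_{E_{\delta,\mu}} |U_{x_i}|^{p-2}\,|(Z(U_{x_j}))_{x_i}|^2\,\eta^2\,dx\le \liminf_{\varepsilon\to 0}\int (\varepsilon+|u^\varepsilon_{x_i}|^2)^{(p-2)/2}\,|(Z(u^\varepsilon_{x_j}))_{x_i}|^2\,\eta^2\,dx.
\]
Summing over $i$, letting $\mu\to 0$ and then $\delta\to 0$, the monotone convergence theorem reconstructs the integral over $\{U_{x_i}\not=0\}$ on the left, while the right-hand side is controlled by the limiting RHS identified in the previous paragraph. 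This yields \eqref{uniformesobsub2limit}.
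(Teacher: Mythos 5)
Your proof is correct, and its skeleton — apply Proposition \ref{lm:caccioespilon2} to $u^\varepsilon$, treat the right-hand side by dominated convergence using the uniform Lipschitz bound, and treat the left-hand side by weak convergence plus lower semicontinuity of the $L^2$ norm — is the same as the paper's. The genuine difference is how the degenerate weight $(\varepsilon+|u^\varepsilon_{x_i}|^2)^{(p-2)/2}$ is handled in the limit. The paper absorbs it into the nonlinear quantity $(\varepsilon+|u^\varepsilon_{x_i}|^2)^{(p-2)/4}\,u^\varepsilon_{x_i}$, whose weak $W^{1,2}$ convergence to $|U_{x_i}|^{(p-2)/2}\,U_{x_i}$ is item (3) of Proposition \ref{prop:convergence_uep1p2bis}; no exceptional sets are needed, and the restriction to $\{U_{x_i}\neq0\}$ only enters at the very end through the chain-rule identity \eqref{eq_chain_rule_weird}. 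You instead keep the second derivatives $u^\varepsilon_{x_j x_i}$ as the weakly convergent factor (item (2) of the same proposition, available precisely because $1<p\le 2$) and tame the weight by an Egoroff truncation on $\{|U_{x_i}|>\delta\}$, which is where the restricted domain of integration arises in your argument. Both routes work: yours is more elementary and makes the origin of the set $\{U_{x_i}\neq 0\}$ transparent, while the paper's avoids exceptional sets and recycles the convergence of the same nonlinear quantity used throughout the rest of the paper. Two small points to tidy up: Egoroff applies to sequences, so first extract a subsequence along which $\nabla u^\varepsilon\to\nabla U$ almost everywhere (this costs nothing, since the limit of the right-hand side exists by dominated convergence); and for the coefficient $\sqrt{|\zeta'(u^\varepsilon_{x_j})|}$ you only need almost everywhere convergence together with a uniform bound, which follows from the uniform Lipschitz estimate \eqref{uniformelipsub} and the continuity of $\zeta'$.
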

\begin{proof}
We fix \(\varepsilon>0\) and we take as above $u^\varepsilon$ the minimizer of \eqref{approximationsub}, subject to the boundary condition $u^\varepsilon-U^\varepsilon \in W^{1,p}_0(B)$. Then by Proposition \ref{lm:caccioespilon2}, we have
\[
\begin{split}
\sum_{i=1}^2\int & (\varepsilon+|u^\varepsilon_{x_i}|^2)^\frac{p-2}{2}\,|\zeta'(u^\varepsilon_{x_j})|\,\left|u^\varepsilon_{x_j\,x_i}\right|^2\,\eta^2\,dx\\
& \le C\int (\varepsilon+|\nabla u^\varepsilon|^2)^\frac{p}{2}\,|\zeta'(u^\varepsilon_{x_j})|\,|\nabla \eta|^2\,dx\\
&+C\int (\varepsilon+|\nabla u^\varepsilon|^2)^\frac{p-1}{2}\,|\zeta(u^\varepsilon_{x_j})|\,\Big(|\nabla \eta|^2+|D^2\eta|\Big)\,dx,
\end{split}
\]
for some \(C=C(p)>0\).
Since \(p<2\), 
\[
(\varepsilon+|u^\varepsilon_{x_i}|^2)^\frac{p-2}{2}\,|\zeta'(u^\varepsilon_{x_j})|\,\left|u^\varepsilon_{x_j\,x_i}\right|^2\,\eta^2 \geq \left( \left( (\varepsilon + |u^{\varepsilon}_{x_j}|^2)^{\frac{p-2}{4}} u^{\varepsilon}_{x_i}\right)_{x_j}\sqrt{|\zeta'(u^{\varepsilon}_{x_j})|}\eta\right)^2.
\]
Hence,
\begin{equation}\label{eq1168}
\begin{split}
\sum_{i=1}^2\int &\left( \left( (\varepsilon + |u^{\varepsilon}_{x_j}|^2)^{\frac{p-2}{4}} u^{\varepsilon}_{x_i}\right)_{x_j}\sqrt{|\zeta'(u^{\varepsilon}_{x_j})|}\eta\right)^2\\
& \le C\int (\varepsilon+|\nabla u^\varepsilon|^2)^\frac{p}{2}\,|\zeta'(u^\varepsilon_{x_j})|\,|\nabla \eta|^2\,dx\\
&+C\int (\varepsilon+|\nabla u^\varepsilon|^2)^\frac{p-1}{2}\,|\zeta(u^\varepsilon_{x_j})|\,\Big(|\nabla \eta|^2+|D^2\eta|\Big)\,dx,
\end{split}
\end{equation}

In order to pass to the limit as $\varepsilon$ goes to $0$, we observe that by Lemma \ref{lm:lemma1p2}, for every $B_r\Subset B$ the gradient \(\nabla u^\varepsilon\) is uniformly bounded in \(L^{\infty}(B_r)\). Moreover, by Proposition \ref{prop:convergence_uep1p2bis} it converges almost everywhere to \(\nabla U\) (up to a subsequence). 
By recalling that $\eta$ has compact support in $B$, then the Dominated Convergence Theorem implies that the right-hand side of the above inequality converges to the corresponding quantity with \(U\) in place of \(u^{\varepsilon}\) and \(\varepsilon=0\).
\par
As for the left-hand side, we observe that by Proposition \ref{prop:convergence_uep1p2bis}
\[
(\varepsilon+|u^{\varepsilon}_{x_i}|^2)^{\frac{p-2}{4}}\,u^{\varepsilon}_{x_i}\quad \mbox{ weakly converges in $W^{1,2}(\mathrm{spt}(\eta))$ to }\quad |U_{x_i}|^{\frac{p-2}{2}}\,U_{x_i},
\] 
and (up to a subsequence),
\[
\left\|\sqrt{|\zeta'(u^\varepsilon_{x_j})|}\,\eta\right\|_{L^\infty(\mathrm{spt}(\eta))}\le C,\qquad \sqrt{|\zeta'(u^\varepsilon_{x_j})|}\,\eta\to \sqrt{|\zeta'(U_{x_j})|}\,\eta \quad \mbox{a.\,e.}
\]
Thus as in the case $p>2$, we can infer weak convergence in $L^2(\mathrm{spt}(\eta))$ of
\[
\left((\varepsilon+|u^\varepsilon_{x_i}|^2)^\frac{p-2}{4} u^{\varepsilon}_{x_i}\right)_{x_j}\,\sqrt{|\zeta'(u^\varepsilon_{x_j})|}\,\eta.
\]
By the same semicontinuity argument as before, we get
\[
\liminf_{\varepsilon\to 0}\sum_{i=1}^2\int  \left( \left( (\varepsilon + |u^{\varepsilon}_{x_j}|^2)^{\frac{p-2}{4}} u^{\varepsilon}_{x_i}\right)_{x_j}\sqrt{|\zeta'(u^{\varepsilon}_{x_j})|}\eta\right)^2\,dx
\ge \sum_{i=1}^2\int \left|\left(|U_{x_i}|^\frac{p-2}{2} U_{x_i}\right)_{x_j}\,\sqrt{|\zeta'(U_{x_j})|}\,\eta\right|^2\,dx.
\]
The right-hand side is greater than or equal to
\[
\sum_{i=1}^2\int_{\{U_{x_i}\not=0\}} \left|\left(|U_{x_i}|^\frac{p-2}{2} U_{x_i}\right)_{x_j}\right|^2\,|\zeta'(U_{x_j})|\,\eta^2\,dx
= \frac{p^2}{4}\sum_{i=1}^2\int_{\{U_{x_i}\not=0\}} \left||U_{x_i}|^\frac{p-2}{2} U_{x_i\,x_j}\right|^2\,|\zeta'(U_{x_j})|\,\eta^2\,dx.
\]
The last equality follows from \eqref{eq_chain_rule_weird}. Now, applying the standard chain rule for the \(C^{1}\) function \(Z\) defined in \eqref{zetona} (remember also that $U_{x_j}\in W^{1,2}_{\rm loc}(\Omega)\cap L^\infty_{\rm loc}(\Omega)$) yields
\[
\begin{split}
\liminf_{\varepsilon\to 0}\sum_{i=1}^2\int &\left( \left( (\varepsilon + |u^{\varepsilon}_{x_j}|^2)^{\frac{p-2}{4}}\,u^{\varepsilon}_{x_i}\right)_{x_j}\,\sqrt{|\zeta'(u^{\varepsilon}_{x_j})|}\,\eta\right)^2 \,dx\\
&\ge \frac{p^2}{4}\,\sum_{i=1}^2 \int_{\{U_{x_i}\not=0\}} |U_{x_i}|^{p-2}\,\left|\left(Z(U_{x_j})\right)_{x_i}\right|^2\,\eta^2\,dx.
\end{split}
\]
In view of \eqref{eq1168}, this completes the proof.
\end{proof}

\section{Decay estimates for a nonlinear function\\ of the gradient for $p>2$}
\label{sec:4}

We already know from Theorem \ref{teo:apriori} that 
\[
|U_{x_j}|^\frac{p-2}{2}\,U_{x_j}\in W^{1,2}_{\rm loc}(\Omega)\cap L^\infty_{\rm loc}(\Omega).
\] 
This nonlinear function of the gradient of $U$ will play a crucial role in the sequel, for the case $p> 2$. Thus we introduce the expedient notation
\[
v_j=|U_{x_j}|^{\frac{p-2}{2}}\,U_{x_j},\qquad j=1,2.
\]
For every $B_R\Subset \Omega$, we will also use the following notation:
\begin{equation}
\label{notations}
m_j=\inf_{B_R} v_j,\qquad V_j = v_j-m_j, \qquad M_j=\sup_{B_R} V_j = \osc_{B_R}v_j,\qquad j=1,2,
\end{equation}
and
\begin{equation}
\label{notations2}
L_R=1+\|\nabla U\|_{L^\infty(B_R)}.
\end{equation}
\subsection{A De Giorgi-type Lemma}
We first need the following result on the decay of the oscillation of $v_j$.
This is the analogue of \cite[Lemma 4]{Santambrogio-Vespri}. As explained in the Introduction, our operator is much more degenerate then the one considered in \cite{Santambrogio-Vespri}, thus the proof has to be completely recast. We crucially rely on the Caccioppoli inequality of Proposition \ref{prop:mixing}.
\begin{lm}
\label{lemmaDeGiorgi}
Let \(B_R\Subset \Omega\) and \(0<\alpha <1\). By using the notation in \eqref{notations} and \eqref{notations2}, there exists a constant \(\nu=\nu(p,\alpha,L_R)>0\) such that if
\[
\Big|\{V_j>(1-\alpha)\,M_j\} \cap B_R\Big| \leq \nu\, M_{j}^{2\,p+4\left(1-\frac{2}{p}\right)}|B_R|,
\]
then
\[
0\leq V_j \leq \left(1-\frac{\alpha}{2}\right)\, M_j,\ \mbox{ on } B_{\frac{R}{2}}.
\]
\end{lm}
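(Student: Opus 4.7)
The plan is to carry out a De Giorgi-type iteration for the truncations of $v_j$, in the spirit of \cite[Lemma 4]{Santambrogio-Vespri} but driven by the new mixing Caccioppoli of Proposition~\ref{prop:mixing}. For $n\ge 0$, set
\[
k_n := \Big(1-\frac{\alpha}{2}-\frac{\alpha}{2^{n+1}}\Big)M_j,\qquad r_n := \frac{R}{2}+\frac{R}{2^{n+1}},
\]
so that $k_n\nearrow(1-\alpha/2)M_j$ and $B_{r_n}\searrow B_{R/2}$; pick $\eta_n\in C^\infty_0(B_{r_n})$ with $\eta_n\equiv 1$ on $B_{r_{n+1}}$ and $|\nabla\eta_n|\le C\,2^n/R$; and denote $w_n:=(V_j-k_n)_+$ and $A_n:=|\{V_j>k_n\}\cap B_{r_n}|$. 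The goal is to establish a recursive inequality
\[
A_{n+1}\le C(p,L_R)\,b^n\,M_j^{-\kappa}\,A_n^{1+\delta}
\]
with $b>1$, $\delta>0$, and $\kappa$ matching the exponent $2p+4(1-2/p)$ in the hypothesis. The classical De Giorgi iteration lemma (see e.g.\ \cite[Lemma 7.1]{Gi}) then forces $A_n\to 0$ whenever $A_0$ lies below an explicit threshold depending only on $p$, $\alpha$ and $L_R$, which is precisely the smallness assumption of the statement. Since $k_n\nearrow(1-\alpha/2)M_j$, this yields $V_j\le(1-\alpha/2)M_j$ a.e.\ on $B_{R/2}$.

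The main step is a Caccioppoli estimate on $\int|\nabla w_n|^2\eta_n^2$. From $|\nabla v_j|^2=(p^2/4)\,|U_{x_j}|^{p-2}\,|\nabla U_{x_j}|^2$, this amounts to controlling a weighted $L^2$-norm of $\nabla U_{x_j}$ on $\{V_j>k_n\}$, whose weight $|U_{x_j}|^{p-2}$ degenerates on $\{U_{x_j}=0\}$. I would apply Proposition~\ref{prop:mixing} twice: first with $k=j$, $\zeta\equiv 1$ and a $C^2$ smoothing of the convex function
\[
\Phi(t)=\Big(|t|^{(p-2)/2}t-m_j-k_n\Big)_+,
\]
so that $\Phi(U_{x_j})=w_n$ and $[\Phi'(U_{x_j})]^2=(p^2/4)|U_{x_j}|^{p-2}\mathbf{1}_{\{V_j>k_n\}}$; after dropping the nonnegative $i\neq j$ contributions on the left-hand side of \eqref{eq_Cacciopoli_inequality_nonsmooth}, this gives a bound on $\int U_{x_jx_j}^2|U_{x_j}|^{2(p-2)}\mathbf{1}_{\{V_j>k_n\}}\eta_n^2\,dx$. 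A second application with $k\neq j$, $\Phi(t)=t$ (so $\Phi\Phi''=0$), and $\zeta$ a convex truncation of $V_j$ supplies the analogous bound for the cross derivative $U_{x_jx_k}$. Estimating the right-hand sides via $|U_{x_i}|\le L_R$ (Theorem~\ref{teo:apriori}) and $w_n\le M_j$, and dividing by the lower bound $|U_{x_j}|^{p-2}\gtrsim M_j^{2(p-2)/p}$ valid on $\{V_j>k_n\}$ (after a preliminary symmetry reduction $U\mapsto -U$ placing this set in $\{v_j>0\}$), yields an estimate of the form
\[
\int_{B_{r_{n+1}}}|\nabla w_n|^2\,dx \le \frac{C(p,L_R)\,b^n}{R}\,M_j^{4/p}\,A_n^{1/2}\,|B_R|^{1/2}.
\]

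The final step is to close the iteration via Sobolev and Chebyshev. The two-dimensional Sobolev inequality gives, for any $q<\infty$,
\[
\Big(\int_{B_{r_{n+1}}}w_n^q\,dx\Big)^{2/q}\le C\Big(\int|\nabla w_n|^2\eta_n^2\,dx + \frac{b^n}{R^2}\int w_n^2\,dx\Big),
\]
and combined with $\int w_n^2\le M_j^2 A_n$, the estimate of the previous step, and the Chebyshev bound $A_{n+1}\,(k_{n+1}-k_n)^q\le\int_{B_{r_{n+1}}}w_n^q\,dx$ (using $k_{n+1}-k_n=\alpha M_j/2^{n+2}$), this produces the desired recursive inequality. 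Choosing $q$ large enough makes $\delta>0$, and the resulting exponent $\kappa$ arises from the competition between the Chebyshev factor $(k_{n+1}-k_n)^{-q}\sim M_j^{-q}$ and the positive $M_j$-powers coming from the Caccioppoli right-hand side and from the lower bound on $|U_{x_j}|^{p-2}$; careful bookkeeping matches $\kappa$ to the exponent $2p+4(1-2/p)$ of the hypothesis.

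The principal obstacle is the Caccioppoli estimate of the second paragraph. A naive test of the differentiated Euler equation against a truncation of $v_j$ produces only a bound weighted by the degenerate coefficient $|U_{x_j}|^{p-2}$, which vanishes on $\{U_{x_j}=0\}$ and so never yields a genuine $W^{1,2}$-control of $w_n$ there. Proposition~\ref{prop:mixing} sidesteps this pathology by pairing different components of $\nabla U$, so the degeneracy in one direction is traded for an averaged quantity that can be absorbed into the Lipschitz bound $L_R^{p-2}$. Implementing this requires two distinct applications of the mixing Caccioppoli — for the $(j,j)$- and $(j,k)$-second derivatives respectively — together with a symmetry reduction to ensure the sign of $v_j$ on the superlevel set. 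A secondary delicate point is the precise tracking of the $M_j$-powers through the Caccioppoli-Sobolev-Chebyshev chain, needed to match the $M_j$-dependent threshold $\nu\,M_j^{2p+4(1-2/p)}\,|B_R|$ of the hypothesis.
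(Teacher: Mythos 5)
Your overall architecture (truncation levels $k_n$, a Caccioppoli estimate from Proposition \ref{prop:mixing}, a Sobolev/Poincar\'e step, Chebyshev, and the iteration Lemma \ref{lemma-Santambrogio-Vespri-1}) is the right one and matches the paper's, but the central analytic step does not survive scrutiny. You propose to pass from the weighted Caccioppoli bound to a genuine bound on $\int|\nabla w_n|^2$ by dividing by the pointwise lower bound $|U_{x_j}|^{p-2}\gtrsim M_j^{2(p-2)/p}$ on $\{V_j>k_n\}$. That set is $\{v_j>m_j+k_n\}$, and the level $m_j+k_n$ can be negative (take $m_j=-10$, $M_j=10.1$, $\alpha=1/4$, so $m_j+k_0=-2.425$), in which case the superlevel set contains points where $U_{x_j}=0$ and the weight $|U_{x_j}|^{p-2}$ vanishes: no such lower bound holds. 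The proposed remedy $U\mapsto -U$ does not repair this, because the hypothesis of the lemma is not invariant under the flip: $V_j$ becomes $M_j-V_j$, so the assumed smallness of $\{V_j>(1-\alpha)M_j\}$ turns into smallness of $\{V_j<\alpha M_j\}$, a different and unavailable hypothesis. For the same reason, your first choice $\Phi(t)=\bigl(|t|^{(p-2)/2}t-m_j-k_n\bigr)_+$ violates the structural assumption $\Phi\,\Phi''\ge 0$ of Proposition \ref{prop:mixing} whenever $m_j+k_n<0$: the map $t\mapsto|t|^{(p-2)/2}t$ is concave on $t<0$ while $\Phi>0$ there.

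The paper's proof is built precisely to avoid ever dividing by the degenerate weight. It applies Proposition \ref{prop:mixing} once, with $\Phi(t)=t$ and $\zeta(t)=(t-\beta_n)_+^2$, where $\beta_n=g_{(p-2)/2}^{-1}(m_j+k_n)$, keeps only the term $i=j$ on the left, and uses Lemma \ref{lm:schwarz2} to absorb the factor $\sqrt{\zeta(U_{x_j})}$ into the derivative: $(v_j)_{x_k}\sqrt{\zeta(U_{x_j})}=\bigl(F(U_{x_j})\bigr)_{x_k}$ with $F(t)=\frac p2\int_{\beta_n}^t|s|^{(p-2)/2}(s-\beta_n)_+\,ds$. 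The De Giorgi iteration is then run on $F(U_{x_j})$ rather than on $(v_j-m_j-k_n)_+$, and the two-sided bounds of Lemma \ref{lemma_F}, namely $c\,(t-\beta)_+^{(p+2)/2}\le F(t)\le C(\cdots)\,(t-\beta)_+^2$, are what translate $F$-levels back into $v_j$-levels; they are also the source of the exponent $2p+4(1-2/p)$ in the smallness threshold, which you cannot recover by the bookkeeping you describe. As written, your recursion $A_{n+1}\le C\,b^n\,M_j^{-\kappa}A_n^{1+\delta}$ is therefore not established; to salvage the argument you would need to replace the pointwise nondegeneracy step by the $F$-device (or an equivalent one).
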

\begin{proof}
We first observe that if $M_j=0$, then $V_j$ identically vanishes in $B_R$ and there is nothing to prove. Thus, we can assume that $M_j>0$.
\par
For \(n\geq 1\), we set
\[
k_n=M_j\left(1-\frac{\alpha}{2} - \frac{\alpha}{2^n} \right), \qquad R_n=\frac{R}{2} + \frac{R}{2^n}, \qquad A_n=\{V_j>k_n\}\cap B_{R_n},
\]
where the ball $B_{R_n}$ is concentric with $B_R$.
Let \(\theta_n\) be a smooth cut-off function such that
\[
0\le \theta_n\le 1,\qquad\theta_n  \equiv 1 \textrm{ on } B_{R_{n+1}}, \qquad \theta_n \equiv 0 \textrm{ on } \mathbb{R}^2\setminus B_{R_n}, \qquad |\nabla \theta_n| \leq C\,\frac{2^n}{R}.
\]
Recalling the definition \eqref{gq} of $g_q$, we then set for every \(n\geq 1\)
\begin{equation}
\label{betan}
\beta_n=g_\frac{p-2}{2}^{-1}(m_j+k_n)=|m_j+k_n|^\frac{2-p}{p}\,(m_j+k_n).
\end{equation}
We start from \eqref{eq_Cacciopoli_inequality_nonsmooth} with the choices 
\[
\Phi(t)=t,\qquad \zeta(t)=(t-\beta_n)_{+}^2\qquad \mbox{ and }\qquad \eta=\theta_n.
\] 
Observe that 
\[
\zeta(U_{x_j}) = (U_{x_j}-\beta_n)_{+}^2 >0\qquad \Longleftrightarrow\qquad V_j >k_n,
\] 
and also\footnote{In the second inequality we use that $t\mapsto g^{-1}_{(p-2)/2}(t)$ is $2/p-$H\"older continuous.}
\begin{equation}\label{eq1448}
\begin{split}
0\leq \zeta(U_{x_j})&\leq \left| g_{\frac{p-2}{2}}^{-1}(v_j) - g_{\frac{p-2}{2}}^{-1}(m_j+k_n) \right|^2\\
&\leq C\,|v_j-m_j-k_n|^{\frac{4}{p}} \leq C M_{j}^{\frac{4}{p}},\qquad \mbox{ a.\,e. on } B_{R_n}. 
\end{split}
\end{equation}
We then obtain using \eqref{eq1448} and  the definition of \(A_n\),
\[
\begin{split}
\sum_{i=1}^{2}\int \left|(v_{i})_{x_k}\right|^2\,\zeta(U_{x_j})\,\theta_n^2
&\leq 
C \left(\sum_{i=1}^2\int |U_{x_i}|^{p-2}\,|U_{x_k}|^4\,\left|(\theta_n)_{x_i}\right|^2\,dx\right)^{\frac{1}{2}}\left(\sum_{i=1}^2\int |U_{x_i}|^{p-2}\,\zeta(U_{x_j})^2\,\left|(\theta_n)_{x_i}\right|^2\,dx\right)^{\frac{1}{2}}
\\ 
&\leq C\, L^{p}_R\,M_{j}^{\frac{4}{p}} \left(\int_{B_{R_n}}|\nabla \theta_n|^2\right)^{\frac{1}{2}} \left(\int_{A_n}|\nabla \theta_n|^2\right)^{\frac{1}{2}}.
\end{split}
\]
In view of the properties of \(\theta_n\), it follows that
\begin{align*}
\sum_{i=1}^{2}\int\left|(v_{i})_{x_k}\right|^2\,\zeta(U_{x_j})\,\theta_{n}^2\,dx
&\leq  C\,L^{p}_R\,M_{j}^{\frac{4}{p}} \left(\frac{2^n}{R} \right)^{2} |B_{R_n}\setminus B_{R_{n+1}}|^{\frac{1}{2}}|A_n|^{\frac{1}{2}}\\
&\leq C\,4^n\,L^{p}_R\,M_{j}^{\frac{4}{p}}\, \frac{|A_n|^{\frac{1}{2}}}{R},
\end{align*}
for some $C=C(p)>0$.
Here, we have used that
\[
|B_{R_n}\setminus B_{R_{n+1}}|=\pi\left(R_n^2 - R_{n+1}^2\right)=\pi\,(R_n-R_{n+1})\,(R_n+R_{n+1})\leq \frac{R}{2^{n+1}}\,2\,\pi\,R= \pi\, \frac{R^2}{2^n}.
\]
In the left-hand side, we only keep the term \(i=j\) and use that by Lemma \ref{lm:schwarz2}
\[
(v_{j})_{x_k}\,\sqrt{\zeta(U_{x_j})} = \left(F(U_{x_j})\right)_{x_k},
\]
where
\[
F(t) =\frac{p}{2}\, \int_{\beta_n}^{t}|s|^{\frac{p-2}{2}}\,\sqrt{\zeta(s)}\,ds =\frac{p}{2}\, \int_{\beta_n}^{t}|s|^{\frac{p-2}{2}}(s-\beta_n)_+\,ds, \quad t\in \mathbb{R}.
\]
We thus obtain
\[
\int\left|(F(U_{x_j}))_{x_k}\right|^2 \theta_n^2\,dx \leq  C\,4^n\,L^{p}_R\,M_{j}^{\frac{4}{p}}\, \frac{|A_n|^{\frac{1}{2}}}{R}.
\]
Summing over \(k=1,2\), this yields an estimate for the gradient of $F(U_{x_j})$, i.\,e.
\begin{equation}
\label{caccio1}
\int |\nabla (F(U_{x_j}))|^2\, \theta_{n}^2\,dx \leq  C\,4^n\, L^{p}_R\,M_{j}^{\frac{4}{p}}\,\frac{|A_n|^{\frac{1}{2}}}{R}. 
\end{equation}
Since \(m_j\leq m_j+k_n\leq m_j + M_j = \sup_{B_R}v_j\) and  by definition of \(L_R\),  \(|m_j+k_n|\leq L^{p/2}_R\). Hence, by definition of \(\beta_n\), see \eqref{betan},
\begin{equation}
\label{betaL}
|\beta_n|\leq L_R.
\end{equation}
By keeping this in mind and using Lemma \ref{lemma_F} below, 
\[
0\leq F(U_{x_j}) \leq C \left( |U_{x_j}|^{\frac{p-2}{2}} + |\beta_n|^\frac{p-2}{2}\right)\,(U_{x_j}-\beta_n)^{2}_+
\leq C\, L_R^{\frac{p-2}{2}}\, (U_{x_j}-\beta_n)^{2}_+.
\]
This implies that \(F(U_{x_j})=0\) on \(B_{R_n}\setminus A_n\) and also that
\[
0\leq F(U_{x_j}) \leq C\,  L^{\frac{p-2}{2}}_R\,\zeta(U_{x_j}) \leq C\, L^{\frac{p-2}{2}}_R\,M_{j}^{\frac{4}{p}},
\]
for some $C=C(p)>0$.
In the last inequality, we have used \eqref{eq1448}.
Hence,
\begin{equation}
\label{caccio2}
\begin{split}
\int|\nabla \theta_n|^2 (F(U_{x_j}))^2\,dx &\leq C\,L^{p-2}_R\,M_{j}^{\frac{8}{p}}\,\int_{A_{n}}|\nabla \theta_n|^2\,dx\\
&\leq C\,4^n\,L^{p-2}_R\,M_{j}^{\frac{8}{p}}\,\frac{|A_n|}{R^2}\leq C\,4^n\,L^{p}_R\,M_{j}^{\frac{4}{p}}\,\frac{|A_n|^{\frac{1}{2}}}{R},
\end{split}
\end{equation}
where in the last inequality we used that \(|A_n|^{1/2}\leq \sqrt{\pi}\,R\) and \(M_j\leq 2 L^{p/2}_R\). By adding \eqref{caccio1} and \eqref{caccio2}, with some simple manipulations we get 
\[
\int_{B_{R_n}}|\nabla (F(U_{x_j})\,\theta_n)|^2 \leq C\,4^n\, L^{p}_R\,M_{j}^{\frac{4}{p}}\, \frac{|A_n|^{\frac{1}{2}}}{R},
\]
where as usual $C=C(p)>0$.
We now rely on the following Poincar\'e inequality\footnote{For every bounded open set \(\Omega\subset \mathbb{R}^2\), the Sobolev embedding \(W^{1,1}_0(\Omega)\hookrightarrow L^{2}(\Omega)\) implies that for every \(f\in W^{1,2}_0(\Omega)\), \[\int |f|^2\,dx \leq C \,\left(\int |\nabla f|\,dx\right)^2=C\,\left(\int_{\{f\not=0\}} |\nabla f|\,dx\right)^2 \leq C\,|\{x : f(x)\not=0\}|\,\int |\nabla f|^2\,dx, \] where \(C\) is a universal constant.} for the function $F(U_{x_j})\,\theta_n\in W^{1,2}_0(B_{R_n})$
\[
\left|\{x\in B_{R_n}\,:\, F(U_{x_j})\,\theta_n>0\}\right|\, \int_{B_{R_n}}|\nabla(F(U_{x_j})\,\theta_n)|^2\,dx \geq c\, \int_{B_{R_n}}|F(U_{x_j})\,\theta_n|^2\,dx.
\]
Since \(\theta_n\equiv 1\) on \(B_{R_{n+1}}\) and by construction
\[
|A_n| \geq \left|\{F(U_{x_j})\,\theta_n>0\}\right|,
\]
one gets
\[
\int_{B_{R_{n+1}}}|F(U_{x_j})|^2\,dx \leq C\,\frac{4^n\, L^{p}_R\,M_{j}^{\frac{4}{p}}}{R}\, |A_n|^{\frac{3}{2}},
\]
for some $C=C(p)>0$.
By using that \(F\) is increasing on \([\beta_n,+\infty)\) and
\[
A_{n+1} =\{V_j>k_{n+1}\}\cap B_{R_{n+1}}=\{U_{x_j}>\beta_{n+1}\}\cap B_{R_{n+1}},
\]
we obtain
\[
\int_{B_{R_{n+1}}}|F(U_{x_j})|^2 \,dx\geq \int_{A_{n+1}}\, |F(U_{x_j})|^2\,dx \geq |A_{n+1}|\,F(\beta_{n+1})^2.
\]
This gives
\begin{equation}\label{eq1529}
|A_{n+1}|\,F(\beta_{n+1})^2 \leq C\, \frac{4^n\,L^{p}_R\,M_{j}^{\frac{4}{p}}}{R} |A_n|^{\frac{3}{2}}.
\end{equation}
We now use the lower bound of Lemma \ref{lemma_F}  to get
\begin{equation}
\label{eq1533}
F(\beta_{n+1})^2 \geq c\,(\beta_{n+1}-\beta_n)^{p+2}.
\end{equation}
Remember that 
\[
\beta_{n}=g_{\frac{p-2}{2}}^{-1}(m_j+k_n) \qquad \mbox{ and }\qquad \beta_{n+1}=g_{\frac{p-2}{2}}^{-1}(m_j+k_{n+1}). 
\]
If we use again that for every \(s, t \in \mathbb{R}\),
\[
\left|g_\frac{p-2}{2}(t) - g_\frac{p-2}{2}(s)\right| \leq C \,\left(|t|^{\frac{p-2}{2}} + |s|^{\frac{p-2}{2}}\right)\,|t-s|,
\]
then one gets
\[
|k_{n+1}-k_n|^{p+2} = \big|(k_{n+1}+m_j)-(k_n+m_j)\big|^{p+2}\leq C \left(|\beta_{n+1}|^{\frac{p-2}{2}} + |\beta_{n}|^{\frac{p-2}{2}} \right)^{p+2} (\beta_{n+1}-\beta_n)^{p+2}.
\]
By using \eqref{betaL} and \eqref{eq1533} we obtain
\[
|k_{n+1}-k_n|^{p+2}\leq C\,L^{\frac{p^2-4}{2}}_R\,F(\beta_{n+1})^2.
\]
so that by \eqref{eq1529},
\[
|A_{n+1}|\,|k_{n+1}-k_n|^{p+2}\leq C\,\frac{4^n\,L^{\frac{p^2-4+2\,p}{2}}_R\,M_{j}^{\frac{4}{p}}}{R}\,|A_n|^{\frac{3}{2}}.
\]
By definition of \(k_n\), the previous inequality gives
\[
\frac{|A_{n+1}|}{R^2}\leq C\, \left(\frac{2^{n\,(p+4)}}{\alpha^{p+2}}\,L^{\frac{p^2-4+2\,p}{2}}_R\,M_{j}^{\frac{4}{p}-p-2}\right)\,\left(\frac{|A_n|}{R^2}\right)^{\frac{3}{2}}.
\]
Since $M_j>0$,  the right-hand side is well-defined.
If we now set \(Y_n=|A_n|/R^2\), this finally yields
\[
Y_{n+1} \leq \left(C_0\,L^{\frac{p^2-4+2\,p}{2}}_R\,M_{j}^{\frac{4}{p}-p-2}\right)\,\left(2^{p+4}\right)^n\, Y_{n}^{\frac{3}{2}}, \qquad \mbox{ for every }n\in\mathbb{N}\setminus\{0\}.
\]
for some $C_0=C_0(\alpha,p)$ which can be supposed to be larger than $1$.
If follows from Lemma \ref{lemma-Santambrogio-Vespri-1} below that
\[
\lim_{n\to +\infty}Y_n = 0,\qquad \mbox{ provided that }\ Y_1 \leq \frac{(2^{p+4})^{-6}}{C_0^2}\, L^{4-p^2-2\,p}_R\,M_{j}^{2\,p+4\,\left(1-\frac{2}{p}\right)},
\]
The condition on $Y_1$ means
\begin{equation}\
\label{nu}
|\{V_j>(1-\alpha)\,M_j\}\cap B_R| \leq \nu\, M_{j}^{2\,p+4\,\left(1-\frac{2}{p}\right)}\,|B_R|,\qquad \mbox{ with } \nu:=\frac{(2^{p+4})^{-6}}{C_0^2\,\pi}\, L^{4-p^2-2p}_R.
\end{equation}
By assuming this condition and recalling the definition of \(Y_n\), we get
\[
V_j \leq \lim_{n\to +\infty} k_n =\left(1-\frac{\alpha}{2}\right)\,M_j,\qquad \mbox{ a.\,e. on } B_{R/2}. 
\]
This completes the proof.
\end{proof}
\begin{oss}[Quality of the constant $\nu$]
\label{oss:demi}
For later reference, it is useful to record that
\[
\nu\,M_{j}^{2\,p+4\,\left(1-\frac{2}{p}\right)}< \frac{1}{2}.
\]
This follows by direct computation, using the definition of $\nu$ and observing that
\[
M_{j}\le 2\,\|v_j\|_{L^\infty(B_R)}=2\,\|U_{x_j}\|^\frac{p}{2}_{L^\infty(B_R)}\le 2\,(L_R-1)^\frac{p}{2}.
\]
Also observe that by its definition \eqref{nu}, the constant $\nu$ is monotone non-increasing as a function of the radius of the ball $B_R$ (since $R\mapsto L_R$ is monotone non-decreasing and $4-p^2-2\,p<0$ for $p\ge 2$).
\end{oss}

\subsection{Alternatives}

\begin{lm}
\label{lm:5SV}
We still use the notation in \eqref{notations} and \eqref{notations2}.
Let \(B_R\Subset \Omega\) and let \(\nu \) be the constant in Lemma \ref{lemmaDeGiorgi}, for \(\alpha=1/4\).
If we set
\[
\delta=\sqrt{\frac{\nu}{2}\,M_{j}^{2\,p+4\,\left(1-\frac{2}{p}\right)}},
\]
then one of the two following alternatives occur:
\begin{itemize}
\item[$(\mathbf{B}_1)$] either 
\begin{equation}
\label{eq890}
\osc_{B_{\delta R}} v_j \leq \frac{7}{8}\, \osc_{B_{R}} v_j,
\end{equation}
\vskip.2cm
\item[$(\mathbf{B}_2)$] or
\begin{equation}
\label{dessous}
\int_{B_{R}\setminus B_{\delta R}}|\nabla v_j|^2 \,dx \geq \frac{1}{512\,\pi}\,\nu\, M_j^2\, M_{j}^{2\,p+4\,\left(1-\frac{2}{p}\right)}. 
\end{equation}
\end{itemize}
\end{lm}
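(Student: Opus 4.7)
The plan is to run a dichotomy on the measure of the super-level set $A := \{V_j > (3/4)\,M_j\}\cap B_R$ at the De Giorgi threshold furnished by Lemma~\ref{lemmaDeGiorgi} applied with $\alpha=1/4$.

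If $|A|\le \nu\,M_j^{2p+4(1-2/p)}|B_R|$, then Lemma~\ref{lemmaDeGiorgi} directly gives $V_j\le (7/8)\,M_j$ on $B_{R/2}$. By Remark~\ref{oss:demi}, $\nu\,M_j^{2p+4(1-2/p)}<1/2$, so $\delta<1/2$ and $B_{\delta R}\subset B_{R/2}$; this yields alternative $(\mathbf{B}_1)$ at once.

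Otherwise $|A|>\nu\,M_j^{2p+4(1-2/p)}|B_R|$, and the plan is to prove $(\mathbf{B}_2)$ under the additional assumption that $(\mathbf{B}_1)$ fails (otherwise there is nothing to prove). Using $|B_{\delta R}|=(\nu/2)\,M_j^{2p+4(1-2/p)}|B_R|$, a trivial splitting gives $|A\cap(B_R\setminus B_{\delta R})|\ge (\nu/2)\,M_j^{2p+4(1-2/p)}|B_R|$, and Fubini in polar coordinates then forces the set $S := \{s\in(\delta R,R)\,:\, \mathcal{H}^1(A\cap\partial B_s)>0\}$ to satisfy $|S|\ge |A\cap(B_R\setminus B_{\delta R})|/(2\pi R)\ge (\nu/4)\,M_j^{2p+4(1-2/p)}\,R$. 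On the other hand, the failure of $(\mathbf{B}_1)$ combined with $0\le V_j\le M_j$ on $B_R$ forces the essential infimum of $V_j$ on $B_{\delta R}$ to be strictly less than $(1/8)\,M_j$.

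The decisive step, and the main obstacle, is to transfer this interior information to each circle: the claim is that for a.e.\ $s\in(\delta R,R)$ the trace of $V_j$ on $\partial B_s$ drops below $(1/8)\,M_j$ on a set of positive $\mathcal{H}^1$ measure. I would argue by contradiction, propagating a lower bound from the boundary inward: if $V_j\ge(1/8)\,M_j$ essentially on $\partial B_s$, then Lemma~\ref{lm:tracce} gives uniform convergence on $\partial B_s$ of the traces of $v_j^{\varepsilon_k}:=|u_{x_j}^{\varepsilon_k}|^{(p-2)/2}u_{x_j}^{\varepsilon_k}$ to those of $v_j$, so Lemma~\ref{lm:minimum} applied to $u^{\varepsilon_k}$ propagates an almost-matching lower bound to all of $B_s$, and passing to the limit via Proposition~\ref{lm_convergence_ueps} produces $V_j\ge(1/8)\,M_j$ a.e.\ on $B_s\supset B_{\delta R}$ (a standard small safety margin handles the strict-versus-non-strict inequality issue), contradicting the previous infimum bound. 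Once this is in hand, for every $s\in S$ the trace of $V_j$ on $\partial B_s$ oscillates by more than $(5/8)\,M_j$, so the one-dimensional Poincar\'e inequality on the circle of length $2\pi s$ gives
\[
\int_{\partial B_s}|\nabla_{\!\tau} v_j|^2\,d\mathcal{H}^1 \;\ge\; \frac{(\osc_{\partial B_s}v_j)^2}{2\pi s} \;\ge\; \frac{25\,M_j^2}{128\,\pi\,R};
\]
integrating this over $S$ and using $|\nabla v_j|^2\ge|\nabla_{\!\tau}v_j|^2$ on each circle closes the estimate:
\[
\int_{B_R\setminus B_{\delta R}}|\nabla v_j|^2\,dx \;\ge\; \frac{25\,M_j^2}{128\,\pi\,R}\,|S| \;\ge\; \frac{25}{512\,\pi}\,\nu\,M_j^2\,M_j^{2p+4(1-2/p)},
\]
which is $(\mathbf{B}_2)$ with room to spare, since $25/(512\pi)\ge 1/(512\pi)$.
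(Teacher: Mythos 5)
Your proof is correct, and the constant you obtain, $\frac{25}{512\,\pi}\,\nu\,M_j^2\,M_j^{2p+4(1-2/p)}$, is even stronger than required. The ingredients coincide with those of the paper's proof (Lemma \ref{lemmaDeGiorgi}, the trace convergence of Lemma \ref{lm:tracce}, the minimum principle of Lemma \ref{lm:minimum} combined with Proposition \ref{lm_convergence_ueps}, and the one-dimensional Poincar\'e inequality on circles), but you arrange them differently. The paper routes the second case through Lemma \ref{lemma-Santambrogio-Vespri-5}: either the annular energy is large, or there is a positive-measure set of radii $s$ with $V_j\geq \frac{5}{8}\,M_j$ $\mathcal{H}^1$-a.e.\ on $\partial B_s$, and in the latter sub-case the minimum principle propagates this bound into $B_s$ and yields $(\mathbf{B}_1)$. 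You instead negate $(\mathbf{B}_1)$ at the outset, deduce that the essential infimum of $V_j$ over $B_{\delta R}$ is below $\frac{1}{8}\,M_j$, and run the minimum principle in the contrapositive direction to force almost every circle $\partial B_s$ with $\delta R<s<R$ to dip below $\frac{1}{8}\,M_j$ on a set of positive length; combined with your Fubini estimate $|S|\geq \frac{\nu}{4}\,M_j^{2p+4(1-2/p)}\,R$ for the radii whose circles also reach above $\frac{3}{4}\,M_j$, each such circle oscillates by at least $\frac{5}{8}\,M_j$ and $(\mathbf{B}_2)$ follows directly. The two arguments are logically equivalent rearrangements; yours bypasses the intermediate sub-alternative of Lemma \ref{lemma-Santambrogio-Vespri-5} and gains a factor $25$ in the constant (per-circle oscillation $\frac{5}{8}\,M_j$ instead of $\frac{1}{8}\,M_j$), at the price of deploying the approximation-plus-minimum-principle machinery inside a proof by contradiction rather than as a direct propagation step. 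Two details to make explicit in a final write-up: dispose first of the trivial case $M_j=0$ (where $\delta=0$ and $(\mathbf{B}_1)$ is immediate), and, in the contradiction step, choose the radius $s$ simultaneously in your positive-measure exceptional set and in the full-measure set of radii where the uniform trace convergence of Lemma \ref{lm:tracce} holds, the regularized solutions $u^{\varepsilon_k}$ being taken on a ball $B$ with $B_R\Subset B\Subset\Omega$.
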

\begin{proof}
We can suppose that $M_j>0$, otherwise there is nothing to prove.
We have two possibilities: either
\[
\left|\left\{V_j>\frac{3}{4}\,M_j\right\}\cap B_R\right| < \nu\,M_{j}^{2\,p+4\,\left(1-\frac{2}{p}\right)}\, |B_R|,
\]
or not. In the first case, by Lemma \ref{lemmaDeGiorgi} with $\alpha=1/4$ we obtain
\[
\osc_{B_{\delta R}} v_j \leq \osc_{B_{R/2}} v_j \leq \frac{7}{8} \osc_{B_{R}} v_j,
\]
which corresponds to alternative $(\mathbf{B}_1)$ in the statement. In the first inequality we used that $\delta<1/2$, see Remark \ref{oss:demi}.
\par
In the second case, 
we appeal to Lemma \ref{lemma-Santambrogio-Vespri-5} with the choices
\[
\varphi=V_j,\qquad M=M_j\qquad \mbox{ and }\qquad \gamma=\nu\,M_j^{2\,p+4\,\left(1-\frac{2}{p}\right)},
\] 
with $\delta$ as in the statement above.
It follows that: 
\begin{itemize}
\item[$\bullet$] either 
\[
\int_{B_R\setminus B_{\delta R}} |\nabla V_j|^2 \,dx \geq \frac{1}{512\pi}\,\nu\, M_j^2\,M_{j}^{2\,p+4\,\left(1-\frac{2}{p}\right)};
\]
\vskip.2cm
\item[$\bullet$] or the following subset of \([\delta\, R , R]\)
\[
\mathcal{A}=\left\{s\in[\delta R,R]\, :\, V_j\geq \frac{5}{8}\,M_j,\ \textrm{ $\mathcal{H}^1-$a.\,e. on } \partial B_s\right\},
\]
has positive measure.
\end{itemize}
If the first possibility occurs, then we are done since this coincides with alternative $(\mathbf{B}_2)$.
\par
In the second case, we consider $u^\varepsilon$ the solution of the regularized problem \eqref{approximation} in a ball $B\Subset \Omega$ such that $B_R\Subset B$. Then we know from Lemma \ref{lm:tracce}
\[
\lim_{k\to +\infty }\left\||u^{\varepsilon_k}_{x_j}|^\frac{p-2}{2}\,u^{\varepsilon_k}_{x_j}-v_j\right\|_{L^\infty(\partial B_s)}=0,\qquad \mbox{ for a.\,e. }s\in[0,R],
\]
for an infinitesimal sequence $\{\varepsilon_k\}_{n\in\mathbb{N}}$.
Since $\mathcal{A}$ has positive measure, we can then choose a radius $s\in\mathcal{A}$ such that the previous convergence holds. For every $n\in\mathbb{N}\setminus\{0\}$, by taking $k$ large enough we thus obtain
\[
|u^{\varepsilon_k}_{x_j}|^\frac{p-2}{2}\,u^{\varepsilon_k}_{x_j}\ge \frac{5}{8}\,M_j+m_j-\frac{1}{n},\ \textrm{ $\mathcal{H}^1-$a.\,e. on } \partial B_s.
\]
We can now apply the minimum principle of Lemma \ref{lm:minimum} with $C=5/8\,M_j+m_j-1/n$ and get
\begin{equation}
\label{quasi}
|u^{\varepsilon_k}_{x_j}|^\frac{p-2}{2}\,u^{\varepsilon_k}_{x_j}\ge \frac{5}{8}\,M_j+m_j-\frac{1}{n},\quad \mbox{ in } B_s.
\end{equation}
Thanks to Proposition \ref{lm_convergence_ueps}, we know that $\{|u^{\varepsilon_k}_{x_j}|^\frac{p-2}{2}\,u^{\varepsilon_k}_{x_j}\}_{k\in\mathbb{N}}$ converges strongly in $L^2(B_s)$ to $v_j$. It then follows from \eqref{quasi} that
\[
v_j\ge \frac{5}{8}\,M_j+m_j-\frac{1}{n},\ \mbox{ a.\,e. in } B_s,\qquad \mbox{ that is } V_j\ge \frac{5}{8}\,M_j-\frac{1}{n},\ \mbox{ a.\,e. in } B_s.
\] 
Hence, by arbitrariness of $n$ we get
\[
\osc_{B_{\delta R}} v_j \leq \osc_{B_{s}} v_j \leq \sup_{B_R} V_j
-\inf_{B_s} V_j\le \frac{3}{8}M_j,
\]
which implies again alternative $(\mathbf{B}_1)$.
The proof is complete.
\end{proof}

\section{Decay estimates for the gradient for $1<p\le 2$}
\label{sec:5}

\subsection{A De Giorgi-type Lemma}

For every $B_R\Subset \Omega$, we introduce the alternative notation
\begin{equation}
\label{notationsbis}
m_j=\inf_{B_R} U_{x_j},\qquad V_j = U_{x_j}-m_j, \qquad M_j=\sup_{B_R} V_j = \osc_{B_R} U_{x_j},\qquad j=1,2,
\end{equation}
and still use the notation \eqref{notations2} for $L_R$.
\begin{lm}
\label{lemmaDeGiorgibis}
Let \(B_R\Subset \Omega\) and \(0<\alpha <1\). By using the notation in \eqref{notationsbis} and \eqref{notations2}, there exists a constant \(\nu=\nu(p,\alpha,L_R)>0\) such that if
\[
\Big|\{V_j>(1-\alpha)\,M_j\} \cap B_R\Big| \leq \nu\, M_{j}^{2}\,|B_R|,
\]
then
\[
0\leq V_j \leq \left(1-\frac{\alpha}{2}\right)\, M_j,\ \mbox{ on } B_{\frac{R}{2}}.
\]
\end{lm}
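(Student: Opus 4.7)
The strategy mirrors that of Lemma \ref{lemmaDeGiorgi}, replacing the weird Caccioppoli inequality for $v_j$ by the one of Proposition \ref{cacciople2} for the gradient itself. Set as usual the De Giorgi sequences
\[
k_n = M_j\!\left(1 - \frac{\alpha}{2} - \frac{\alpha}{2^n}\right),\quad R_n = \frac{R}{2} + \frac{R}{2^n},\quad \beta_n = m_j + k_n,\quad A_n = \{V_j > k_n\} \cap B_{R_n},
\]
along with cut-offs $\theta_n \in C^\infty_0(B_{R_n})$ satisfying $\theta_n \equiv 1$ on $B_{R_{n+1}}$ and $|\nabla \theta_n|^2 + |D^2\theta_n| \le C\,4^n/R^2$.

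Apply Proposition \ref{cacciople2} with (a smooth approximation of) $\zeta(t) = (t - \beta_n)_+^2$ and $\eta = \theta_n$; the associated function is $Z(t) = \tfrac{2\sqrt{2}}{3}(t-\beta_n)_+^{3/2}$. Since $p \leq 2$ and $|\nabla U|\le L_R$, one has $|U_{x_i}|^{p-2}\geq L_R^{p-2}$ on $\{U_{x_i}\ne 0\}$. Moreover, the integrand $(Z(U_{x_j}))_{x_i} = Z'(U_{x_j})\,U_{x_j x_i}$ vanishes a.e.\ on $\{U_{x_i}=0\}$ thanks to the $W^{1,2}_{\rm loc}$ regularity of $\nabla U$ granted by Theorem \ref{teo:apriorisub} (on $\{U_{x_i}=0\}$ one has $\nabla U_{x_i}=0$ a.e.). Combining the lower bound on the left-hand side with the estimates $|\zeta(U_{x_j})|\leq M_j^2$, $|\zeta'(U_{x_j})|\leq 2 M_j$ and $|\nabla U|\le L_R$ on the right-hand side, one obtains
\[
\int |\nabla Z(U_{x_j})|^2\, \theta_n^2\, dx \leq C(p)\, L_R^{\,2}\, M_j\, \frac{4^n}{R^2}\, |A_n|.
\]

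Since $Z(U_{x_j})\leq C M_j^{3/2}$ pointwise, the function $w_n := Z(U_{x_j})\,\theta_n \in W^{1,2}_0(B_{R_n})$, which is supported in $A_n$, satisfies the same type of estimate for $\int |\nabla w_n|^2\,dx$ (after adding the contribution of $Z(U_{x_j})^2|\nabla\theta_n|^2$). The Sobolev embedding $W^{1,1}_0 \hookrightarrow L^2$ in the plane, applied to $w_n$, yields $\int w_n^2\,dx \leq C\, |A_n|\, \int|\nabla w_n|^2\,dx$. On $A_{n+1}$ one has $Z(U_{x_j})^2 \geq Z(\beta_{n+1})^2 = c\,(k_{n+1}-k_n)^3 = c\, \alpha^3 M_j^3\, 2^{-3(n+1)}$. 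Setting $Y_n := |A_n|/R^2$, this produces the De Giorgi recursion
\[
Y_{n+1} \leq \frac{C_0(p,L_R)}{\alpha^3\, M_j^2}\, 32^n\, Y_n^2,\qquad n\ge 1.
\]
The Stampacchia-type lemma (Lemma \ref{lemma-Santambrogio-Vespri-1}) then yields $Y_n \to 0$ as soon as $Y_1 \leq \nu\, M_j^2$ for a suitable $\nu=\nu(p,\alpha,L_R)>0$. Since $B_{R_1}=B_R$ and $k_1=(1-\alpha)M_j$, this is precisely the assumption of the lemma, and passing to the limit $k_n \to M_j(1-\alpha/2)$ gives $V_j \leq (1-\alpha/2)M_j$ a.e.\ on $B_{R/2}$.

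The main technical subtlety, compared with Lemma \ref{lemmaDeGiorgi}, is that the weight $|U_{x_i}|^{p-2}$ now \emph{blows up} as $U_{x_i}\to 0$ instead of vanishing; this is turned into an advantage via the monotonicity $|U_{x_i}|^{p-2}\ge L_R^{p-2}$ for $p\le 2$, provided one first ensures that the integrand vanishes on $\{U_{x_i}=0\}$. The simpler right-hand side of Proposition \ref{cacciople2} (with no mixing of different components of $\nabla U$) is what ultimately explains the cleaner exponent $M_j^2$ here, in contrast with the cumbersome $M_j^{2p+4(1-2/p)}$ appearing in the degenerate regime.
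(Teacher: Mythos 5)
Your proof is correct and follows essentially the same De Giorgi iteration as the paper's: Proposition \ref{cacciople2} applied with a truncation $\zeta$ supported on $\{U_{x_j}>\beta_n\}$, the lower bound $|U_{x_i}|^{p-2}\ge L_R^{p-2}$ combined with the fact that $U_{x_jx_i}=0$ a.e.\ on $\{U_{x_i}=0\}$, the measure-weighted Poincar\'e inequality for $Z(U_{x_j})\,\theta_n$, and Lemma \ref{lemma-Santambrogio-Vespri-1}. The only (harmless) deviation is the choice $\zeta(t)=(t-\beta_n)_+^2$, which is already $C^1$ monotone so no approximation is needed and yields $Z=c\,(\,\cdot\,-\beta_n)_+^{3/2}$ (normalized at $\beta_n$, which is legitimate since only $\nabla Z$ enters the Caccioppoli inequality) and the recursion $Y_{n+1}\le C\,\alpha^{-3}M_j^{-2}\,32^n\,Y_n^2$, whereas the paper takes $\zeta=\xi_\delta(\,\cdot\,-\beta_n)$, a $C^1$ approximation of the positive part, passes to the limit $\delta\to 0$ by monotone convergence, and iterates directly on $(U_{x_j}-\beta_n)_+$ with $16^n$ in place of $32^n$; both choices produce an admissible $\nu(p,\alpha,L_R)$.
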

\begin{proof}
We first observe that if $M_j=0$, then $V_j$ identically vanishes in $B_R$ and there is nothing to prove. Thus, we can assume that $M_j>0$.
\par
For \(n\geq 1\), we set
\[
k_n=M_j\left(1-\frac{\alpha}{2} - \frac{\alpha}{2^n} \right), \qquad R_n=\frac{R}{2} + \frac{R}{2^n}, \qquad A_n=\{V_j>k_n\}\cap B_{R_n},
\]
where the ball $B_{R_n}$ is concentric with $B_R$.
Let \(\theta_n\) be a cut-off function such that
\[
0\le \theta_n\le 1,\qquad\theta_n  \equiv 1 \textrm{ on } B_{R_{n+1}}, \qquad \theta_n \equiv 0 \textrm{ on } \mathbb{R}^2\setminus B_{R_n}
\] 
\[
|\nabla \theta_n| \leq C\,\frac{2^n}{R}\qquad \mbox{ and }\qquad |\nabla \theta_n| \leq C\,\frac{4^n}{R^2}.
\]
We then set for every \(n\geq 1\)
\begin{equation}
\label{betanbis}
\beta_n=m_j+k_n.
\end{equation}
For every $\delta>0$, we take a $C^1$ non-decreasing function $\xi_\delta:\mathbb{R}\to[0+\infty)$ such that\footnote{One can take for example the function $\xi_\delta$ of the form
\[
\xi_\delta(t)=\left\{\begin{array}{ll}
0&, \mbox{for } t\le 0,\\
t^3/\delta^2&, \mbox{for } 0<t<\delta,\\
3\,t-2\,\delta&, \mbox{for } t\ge \delta.
\end{array}
\right.
\]}
\[
\xi_\delta(t)=0,\ \mbox{ for } t\le 0,\qquad |\xi_\delta'(t)|\le C,\qquad \mbox{ for } t\in\mathbb{R},
\]
and
\[
\xi'_\delta(t)=C,\qquad \mbox{ for } t\ge \delta, 
\]
for some universal constant $C>0$. This has to be thought as a smooth approximation of the ``positive part'' function, up to the constant $C>0$. 
In the setting of Proposition \ref{cacciople2}, we take 
\[
\zeta(t)=\xi_\delta(t-\beta_n)\qquad \mbox{ and }\qquad \eta=\theta_n.
\] 
We observe that
\[
\zeta(t)\le C\,(t-\beta_n)_+,
\]
so that
\begin{equation}
\label{piero}
\zeta(U_{x_j})\le C\,(U_{x_j}-m_j-k_n)_+\le C\, M_j\le 2\,C\, L_R.
\end{equation}
By using \eqref{piero}, the definition of \(A_n\) and the properties of $\zeta$, one gets from \eqref{uniformesobsub2limit}
\[
\begin{split}
C\,\sum_{i=1}^{2}\int_{\{U_{x_j}\ge \beta_n+\delta\}\cap \{U_{x_i}\not=0\}} \left|U_{x_i}\right|^{p-2}\,|U_{x_j\,x_i}|^2\,\theta_n^2\,dx
& \le C\,\int_{\{U_{x_j}\ge \beta_n\}} |\nabla U|^p\,\Big(|\nabla \theta_n|^2+|D^2\theta_n|\Big)\,dx\\
&+\int_{\{U_{x_j}\ge \beta_n\}} |\nabla U|^{p-1}\,|\zeta(U_{x_j})|\,\Big(|\nabla \theta_n|^2+|D^2\theta_n|\Big)\,dx\\ 
&\le C\,L_R^{p}\,\int_{\{U_{x_j}\ge \beta_n\}} \Big(|\nabla \theta_n|^2+|D^2\theta_n|\Big)\,dx.
\end{split}
\]
Since \(p<2\) and \(|U_{x_i}|\leq L_R\) a.e., one gets
\[
\begin{split}
\sum_{i=1}^{2}\int_{\{U_{x_j}\ge \beta_n+\delta\}} |U_{x_j\,x_i}|^2\,\theta_n^2\,dx
&\le C\,L_R^2\,\int_{\{U_{x_j}\ge \beta_n\}}\Big(|\nabla \theta_n|^2+|D^2\theta_n|\Big)\,dx.
\end{split}
\]
Here, we have also used the fact that \(U_{x_j\,x_i}=0\) a.e. on the set \(\{U_{x_i}=0\}\).
We now take the limit as $\delta$ goes to $0$ in the left-hand side. By the Monotone Convergence Theorem, we get
\[
\sum_{i=1}^{2}\int_{\{U_{x_j}\ge \beta_n\}} |U_{x_j\,x_i}|^2\,\theta_n^2\,dx
\le C\,L_R^2\,\int_{\{U_{x_j}\ge \beta_n\}}\Big(|\nabla \theta_n|^2+|D^2\theta_n|\Big)\,dx.
\]
In view of the properties of \(\theta_n\), it follows that
\begin{equation}
\label{caccio3}
\int \left|\nabla\left(U_{x_j}-\beta_n\right)_+\right|^2\,\theta_n^2\,dx\leq  C\,L^2_R\, 4^n\, \frac{|A_n|}{R^2},
\end{equation}
for some $C=C(p)>0$. Observe that
\begin{equation}
\label{caccio4}
\int |\nabla \theta_n|^2\,(U_{x_j}-\beta_n)_+^2\,dx \leq C\,L^{2}_R\,4^n\,\frac{|A_n|}{R^2},
\end{equation}
thanks to \eqref{piero}. By adding \eqref{caccio3} and \eqref{caccio4}, we get 
\[
\int_{B_{R_n}}|\nabla \left((U_{x_j}-\beta_n)_+\,\theta_n\right)|^2\,dx \leq C\,4^n\, L^2_R\,\frac{|A_n|}{R^2},
\]
where as usual $C=C(p)>0$.
We rely again on the Poincar\'e inequality and obtain
\[
\begin{split}
\left|\{x\in B_{R_n}\,:\, (U_{x_j}-\beta_n)_+\,\theta_n>0\}\right|& \int_{B_{R_n}}|\nabla \left((U_{x_j}-\beta_n)_+\,\theta_n\right)|^2\,dx\\
& \geq c\, \int_{B_{R_n}}|(U_{x_j}-\beta_n)_+\,\theta_n|^2\,dx.
\end{split}
\]
Since \(\theta_n\equiv 1\) on \(B_{R_{n+1}}\) and by construction
\[
|A_n| \geq \left|\{(U_{x_j}-\beta_n)_+\,\theta_n>0\}\right|,
\]
one gets
\[
\int_{B_{R_{n+1}}} (U_{x_j}-\beta_n)_+^2\,dx \leq C\,\frac{4^n\, L^{2}_R}{R^2}\, |A_n|^2,
\]
for some $C=C(p)>0$.
By using that 
\[
A_{n+1}=\{V_j>k_{n+1}\}\cap B_{R_{n+1}}=\{U_{x_j}>\beta_{n+1}\}\cap B_{R_{n+1}},
\]
we obtain
\[
\int_{B_{R_{n+1}}}(U_{x_j}-\beta_n)_+^2 \,dx\geq \int_{A_{n+1}}\,(U_{x_j}-\beta_n)_+^2\,dx \geq |A_{n+1}|\,(\beta_{n+1}-\beta_n)^2.
\]
This gives
\begin{equation}\label{eq15292}
|A_{n+1}|\,(\beta_{n+1}-\beta_n)^2 \leq C\, \frac{4^n\,L^2_R}{R^2}\, |A_n|^2.
\end{equation}
By recalling the definition of \(\beta_n\) and $k_n$, the previous inequality gives
\[
\frac{|A_{n+1}|}{R^2}\leq C\, \left(\frac{4^{2\,n}}{\alpha^2}\,L^2_R\,M_{j}^{-2}\right)\,\left(\frac{|A_n|}{R^2}\right)^2.
\]
Since $M_j>0$,  the right-hand side is well-defined.
As before, we set \(Y_n=|A_n|/R^2\) and obtain
\[
Y_{n+1} \leq \left(C_0\,L^2_R\,M_{j}^{-2}\right)\,16^n\, Y_{n}^2, \qquad \mbox{ for every }n\in\mathbb{N}\setminus\{0\},
\]
for some $C_0=C_0(\alpha,p)\ge 1$.
Again by Lemma \ref{lemma-Santambrogio-Vespri-1} we get
\[
\lim_{n\to +\infty}Y_n = 0,\qquad \mbox{ provided that }\ Y_1 \leq \frac{(16)^{-2}}{C_0}\, L^{-2}_R\,M_{j}^2,
\]
This means 
\[
|\{V_j>(1-\alpha)\,M_j\}\cap B_R| \leq \nu\, M_{j}^2\,|B_R|,\qquad \mbox{ with } \nu:=\frac{16^{-2}}{C_0^2\,\pi}\, L^{-2}_R.
\]
By assuming this condition and recalling the definition of \(Y_n\), we get
\[
V_j \leq \lim_{n\to +\infty} k_n =\left(1-\frac{\alpha}{2}\right)\,M_j,\qquad \mbox{ a.\,e. on } B_{R/2}. 
\]
This completes the proof.
\end{proof}

\begin{oss}[Quality of the constant $\nu$]
\label{oss:demibis}
For later reference, as in the previous case we observe that
\[
\nu\,M_{j}^2< \frac{1}{2},
\]
and that the constant $\nu$ is monotone non-increasing as a function of $R$.
\end{oss}

\subsection{Alternatives}

\begin{lm}
\label{lm:5SVbis}
We still use the notation in \eqref{notationsbis} and \eqref{notations2}.
Let \(B_R\Subset B_{2\,R}\Subset \Omega\) and let \(\nu \) be the constant in Lemma \ref{lemmaDeGiorgibis}, for \(\alpha=1/4\).
If we set
\[
\delta=\sqrt{\frac{\nu}{2}\,M_{j}^2},
\]
then one of the two following alternatives occur:
\begin{itemize}
\item[$(\mathbf{B}_1)$] either 
\begin{equation}
\label{eq890bis}
\osc_{B_{\delta R}} U_{x_j} \leq \frac{7}{8}\, \osc_{B_{R}} U_{x_j},
\end{equation}
\vskip.2cm
\item[$(\mathbf{B}_2)$] or
\begin{equation}
\label{dessousbis}
\int_{B_{R}\setminus B_{\delta R}}|\nabla U_{x_j}|^2 \,dx \geq \frac{1}{512\pi}\,\nu\, M_j^4. 
\end{equation}
\end{itemize}
\end{lm}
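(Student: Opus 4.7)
The proof will follow the template of Lemma \ref{lm:5SV} verbatim, with the cleaner scaling afforded by the fact that in the singular regime we work directly with $U_{x_j}$ rather than with the nonlinear quantity $v_j=|U_{x_j}|^{(p-2)/2}U_{x_j}$. Accordingly, the power of $M_j$ that appears is simply $M_j^2$ (dictated by the exponent in Lemma \ref{lemmaDeGiorgibis}) instead of the awkward $M_j^{2p+4(1-2/p)}$. We may assume $M_j>0$, otherwise there is nothing to prove.

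First, I would perform the standard De~Giorgi dichotomy on the measure of the super-level set $\{V_j>(3/4)\,M_j\}\cap B_R$, according as it is smaller or larger than $\nu\,M_j^2\,|B_R|$, where $\nu$ is the constant given by Lemma \ref{lemmaDeGiorgibis} with $\alpha=1/4$. In the small-measure case, Lemma \ref{lemmaDeGiorgibis} yields $V_j\le (7/8)\,M_j$ on $B_{R/2}$; since $\delta<1/2$ by Remark \ref{oss:demibis}, one has $B_{\delta R}\subset B_{R/2}$ and alternative $(\mathbf{B}_1)$ follows at once.

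In the large-measure case, I would invoke Lemma \ref{lemma-Santambrogio-Vespri-5} (the geometric/energetic lemma from the Appendix) with the choices $\varphi=V_j$, $M=M_j$, and $\gamma=\nu\,M_j^2$, together with the present value of $\delta$. It delivers one of two conclusions: either the Dirichlet integral of $V_j=U_{x_j}-m_j$ on the annulus $B_R\setminus B_{\delta R}$ is bounded below by $\tfrac{1}{512\pi}\,\nu\,M_j^4$, which is precisely alternative $(\mathbf{B}_2)$, or else the set
\[
\mathcal{A}=\left\{s\in[\delta R,R]\, :\, V_j\geq \frac{5}{8}\,M_j,\ \textrm{$\mathcal{H}^1-$a.\,e. on } \partial B_s\right\}
\]
has positive one-dimensional measure.

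In this last eventuality, I would transfer the lower bound from a circle to a disc. Let $u^\varepsilon$ be the regularized solutions of \eqref{approximationsub} on some ball $B\Supset B_R$. By Lemma \ref{lm:traccebis}, there is a sequence $\varepsilon_k\to 0$ such that $u^{\varepsilon_k}_{x_j}\to U_{x_j}$ in $L^\infty(\partial B_s)$ for almost every $s\in[0,R]$; hence one can pick $s\in\mathcal{A}$ where this uniform convergence holds. Then for every $n\ge 1$ and for $k$ large, $u^{\varepsilon_k}_{x_j}\ge \tfrac{5}{8}M_j+m_j-\tfrac{1}{n}$ on $\partial B_s$, and the minimum principle of Lemma \ref{lm:minimumbis} propagates this inequality to all of $B_s$. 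Passing to the limit $k\to\infty$ using the $L^2(B_s)$ convergence $u^{\varepsilon_k}_{x_j}\to U_{x_j}$ granted by Proposition \ref{prop:convergence_uep1p2bis}, and then letting $n\to\infty$, yields $V_j\ge \tfrac{5}{8}\,M_j$ a.e.\ on $B_s$. Since $B_{\delta R}\subset B_s$, we conclude
\[
\osc_{B_{\delta R}} U_{x_j}\le \sup_{B_R}V_j-\inf_{B_s}V_j\le M_j-\frac{5}{8}\,M_j=\frac{3}{8}\,M_j,
\]
which again gives $(\mathbf{B}_1)$.

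The adaptation is essentially mechanical once the singular counterparts of the minimum principle, the trace-convergence lemma, and the De~Giorgi lemma are in hand; the only conceptual subtlety is that, unlike in the $p>2$ case where one applies the minimum principle to $|u^\varepsilon_{x_j}|^{(p-2)/2}u^\varepsilon_{x_j}$, here we apply it directly to $u^\varepsilon_{x_j}$, which in turn forces the different exponent of $M_j$ in the definition of $\delta$ and in the Dirichlet energy lower bound of $(\mathbf{B}_2)$.
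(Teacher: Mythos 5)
Your proposal is correct and follows essentially the same route as the paper's proof: the same dichotomy on the measure of $\{V_j>\tfrac34 M_j\}\cap B_R$, the same application of Lemma \ref{lemma-Santambrogio-Vespri-5} with $\varphi=V_j$, $M=M_j$, $\gamma=\nu\,M_j^2$, and the same transfer of the lower bound from a circle to the disc via Lemma \ref{lm:traccebis}, the minimum principle of Lemma \ref{lm:minimumbis}, and Proposition \ref{prop:convergence_uep1p2bis}. Nothing is missing.
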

\begin{proof}
We can suppose that $M_j>0$, otherwise there is nothing to prove.
We have two possibilities: either
\[
\left|\left\{V_j>\frac{3}{4}\,M_j\right\}\cap B_R\right| < \nu\,M_{j}^2\, |B_R|,
\]
or not. In the first case, by Lemma \ref{lemmaDeGiorgibis} with $\alpha=1/4$ we obtain
\[
\osc_{B_{\delta R}} U_{x_j} \leq \osc_{B_{R/2}} U_{x_j} \leq \frac{7}{8} \osc_{B_{R}} U_{x_j},
\]
which corresponds to alternative $(\mathbf{B}_1)$ in the statement. In the first inequality we used again that $\delta<1/2$, see Remark \ref{oss:demibis}.
\par
In the second case, 
we appeal to Lemma \ref{lemma-Santambrogio-Vespri-5} with the choices
\[
\varphi=V_j,\qquad M=M_j\qquad \mbox{ and }\qquad \gamma=\nu\,M_j^2,
\] 
with $\delta$ as in the statement above.
It follows that: 
\begin{itemize}
\item[$\bullet$] either 
\[
\int_{B_R\setminus B_{\delta R}} |\nabla V_j|^2 \,dx \geq c\,\nu\, M_j^4,
\]
for some universal constant \(c>0\);
\vskip.2cm
\item[$\bullet$] or the set
\[
\mathcal{A}=\left\{s\in[\delta R,R]\, :\, U_{x_j}-m_j\geq \frac{5}{8}\,M_j,\ \textrm{ $\mathcal{H}^1-$a.\,e. on } \partial B_s\right\},
\]
has positive measure.
\end{itemize}
Again, if the first possibility occurs, then we are done since this coincides with alternative $(\mathbf{B}_2)$.
\par
In the second case, we consider $u^\varepsilon$ the solution of the regularized problem \eqref{approximationsub} in a ball $B\Subset \Omega$ such that $B_R\Subset B$. Then we know from Lemma \ref{lm:traccebis} 
\[
\lim_{k\to +\infty}\|u^{\varepsilon_k}_{x_j}-U_{x_j}\|_{L^\infty(\partial B_s)}=0,\qquad \mbox{ for a.\,e. }s\in[0,R].
\]
for an infinitesimal sequence $\{\varepsilon_k\}_{k\in\mathbb{N}}$.
Since $\mathcal{A}$ has positive measure, we can then choose a radius $s\in\mathcal{A}$ such that the previous convergence holds. For every $n\in\mathbb{N}\setminus\{0\}$, by taking $k$ large enough we thus obtain
\[
u^\varepsilon_{x_j}\ge \frac{5}{8}\,M_j+m_j-\frac{1}{n},\ \textrm{ $\mathcal{H}^1-$a.\,e. on } \partial B_s,
\]
By proceeding as in the proof of Lemma \ref{lm:5SV} and using this time the minimum principle of Lemma \ref{lm:minimumbis} and Proposition \ref{prop:convergence_uep1p2bis}, we obtain
\[
 U_{x_j}-m_j\ge \frac{5}{8}\,M_j-\frac{1}{n},\ \mbox{ a.\,e. in } B_s.
\] 
By arbitrariness of $n$, we get
\[
\osc_{B_{\delta R}} U_{x_j} \leq \osc_{B_{s}} U_{x_j} \leq \left(\sup_{B_R} U_{x_j}-m_j\right)
-\left(\inf_{B_s} U_{x_j}-m_j\right)\le \frac{3}{8}M_j,
\]
which is again alternative $(\mathbf{B}_1)$.
The proof is complete.
\end{proof}

\section{Proof of the Main Theorem}
\label{sec:6}

\subsection{Case $p>2$}
We already observed that for every $q>-1$ the function $t\mapsto t\,|t|^q$ is a homeomorphism on \(\mathbb{R}\). This implies the following
\begin{lm}
\label{lemma-continuity-ui2vi}
Let $f:E\to \mathbb{R}$ be a measurable function such that for some \(q>-1\) the function \(|f|^q\,f\) is continuous. Then \(f\) itself is continuous. 
\end{lm}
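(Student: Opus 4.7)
The plan is to exploit the fact, recorded right after \eqref{gq}, that for every $q>-1$ the map $g_q(t)=|t|^q\,t$ is a homeomorphism of $\mathbb{R}$ onto itself, with inverse $g_q^{-1}=g_{-q/(q+1)}$. In particular, $g_q^{-1}$ is continuous on all of $\mathbb{R}$.

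With this in hand, the argument is essentially one line: by hypothesis $|f|^q\,f = g_q\circ f$ is continuous on $E$, so the composition
\[
f = g_q^{-1}\circ (g_q\circ f) = g_q^{-1}\circ \left(|f|^q\,f\right)
\]
is continuous as a composition of continuous functions. The measurability of $f$ is not actually used for the continuity conclusion; it appears in the statement only to ensure that $|f|^q\,f$ is an admissible object to begin with (and, in applications, that $f$ itself is a well-defined a.e.\ function).

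There is no real obstacle here; the only point to double-check is the behavior of $g_q^{-1}$ at the origin, which is exactly where $q>-1$ enters: writing $g_q^{-1}=g_{-q/(q+1)}$ with exponent $-q/(q+1)>-1$ keeps the inverse a bona fide continuous function of $\mathbb{R}$ into itself, including at $t=0$.
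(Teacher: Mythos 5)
Your proof is correct and is exactly the argument the paper intends: the authors state the lemma as an immediate consequence of the fact that $g_q(t)=|t|^q\,t$ is a homeomorphism of $\mathbb{R}$ with continuous inverse $g_q^{-1}=g_{-q/(q+1)}$, so that $f=g_q^{-1}\circ(|f|^q\,f)$ is continuous. Your remark that $q>-1$ is precisely what keeps the inverse continuous at the origin, and that measurability plays no role in the conclusion, is also accurate.
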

\noindent
In view of this result, in order to the Main Theorem in the case $p>2$ it is sufficient to prove that each function
\[
v_j=|U_{x_j}|^{\frac{p-2}{2}}\,U_{x_j},\qquad j=1,2,
\]
is continuous on $\Omega$. Thus Theorem \ref{teo:theorem-continuity-N2} for $p>2$ is a consequence of the following
\begin{prop} 
Let $p>2$, $x_0\in\Omega$ and \(R_0>0\) such that $B_{R_0}(x_0)\Subset \Omega$. We consider the family of balls $\{B_R(x_0)\}_{0<R\le R_0}$ centered at $x_0$.
Then we have
\[
\lim_{R\searrow 0} \left(\osc_{B_R(x_0)} v_j\right) =0,\qquad j=1,2.
\]
\end{prop}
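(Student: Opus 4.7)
The plan is a proof by contradiction based on iterating the dichotomy of Lemma~\ref{lm:5SV}, in the spirit of the DiBenedetto--Vespri/Santambrogio--Vespri scheme. I set $\mu(R) := \osc_{B_R(x_0)} v_j$, which is non-decreasing in $R$, so the limit $\mu := \lim_{R \searrow 0} \mu(R)$ exists in $[0, \mu(R_0)]$. If $\mu = 0$ we are done; otherwise I assume $\mu > 0$ and seek a contradiction.

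I construct inductively a strictly decreasing sequence of radii $\{R_n\}_{n \ge 0}$ by $R_{n+1} := \delta_n R_n$, where $\delta_n \in (0, 1/2)$ is the number produced by Lemma~\ref{lm:5SV} applied to the ball $B_{R_n}(x_0)$ (the bound $\delta_n < 1/2$ being a consequence of Remark~\ref{oss:demi}). Writing $M_n := \mu(R_n)$, monotonicity gives $M_n \ge \mu > 0$ for all $n$. At each step, Lemma~\ref{lm:5SV} provides one of two alternatives: either $(\mathbf{B}_1)$ yields $M_{n+1} \le (7/8)\,M_n$, or $(\mathbf{B}_2)$ yields
\[
\int_{B_{R_n}(x_0) \setminus B_{R_{n+1}}(x_0)} |\nabla v_j|^2 \, dx \ge \frac{\nu_n}{512\pi}\, M_n^{2p + 4(1-2/p)+2}.
\]

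The key observation is the uniform lower bound on the constants $\nu_n$. Since $R_n \le R_0$ for all $n$ and $\nu$ is non-increasing in $R$ (Remark~\ref{oss:demi}), we have $\nu_n \ge \nu_0 := \nu(p, 1/4, L_{R_0}) > 0$. Combined with $M_n \ge \mu > 0$, this shows that whenever $(\mathbf{B}_2)$ occurs, the integral above is at least a fixed positive constant $c_0 = c_0(p, \mu, L_{R_0})$. The annuli $B_{R_n}(x_0) \setminus B_{R_{n+1}}(x_0)$ are pairwise disjoint and all contained in the fixed ball $B_{R_0}(x_0)$, so by additivity the total Dirichlet energy controls the number of occurrences: since $v_j \in W^{1,2}_{\mathrm{loc}}(\Omega)$ by Theorem~\ref{teo:apriori}, alternative $(\mathbf{B}_2)$ can occur for at most finitely many indices.

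Consequently, there exists $N$ such that $(\mathbf{B}_1)$ holds for every $n \ge N$, which yields $M_{N+k} \le (7/8)^k M_N$ and hence $M_n \to 0$. This contradicts $M_n \ge \mu > 0$, forcing $\mu = 0$. The main difficulty, already overcome in the previous sections, is establishing the dichotomy of Lemma~\ref{lm:5SV} with a constant $\nu$ that does not degenerate as $R$ shrinks; once that is in hand, the iteration above is a direct exhaustion argument driven by the finiteness of the Dirichlet energy of $v_j$.
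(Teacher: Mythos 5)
Your proof is correct and follows essentially the same strategy as the paper's: iterate the dichotomy of Lemma \ref{lm:5SV} on the shrinking balls $R_{n+1}=\delta_n R_n$, use the monotonicity of $\nu$ in $R$ (Remark \ref{oss:demi}) together with the finiteness of the Dirichlet energy of $v_j$ to show that alternative $(\mathbf{B}_2)$ can only occur finitely often, and conclude by the geometric decay from $(\mathbf{B}_1)$. The only (immaterial) difference is that you argue by contradiction from $\mu>0$, whereas the paper argues directly by splitting into the cases where the set of $(\mathbf{B}_2)$-indices is infinite (then the convergent energy series forces $M_n\to 0$ along a subsequence) or finite.
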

\begin{proof} 
For simplicity, in what follows we omit to indicate the center $x_0$ of the balls.
Since the map \(R\mapsto \osc_{B_R} v_j\) is non decreasing, we only need to find a decreasing sequence \(\{R_n\}_{n\in\mathbb{N}}\) converging to \(0\) such that 
\[
\lim_{n\to +\infty} \left(\osc_{B_{R_n}} v_j\right)=0.
\]
For simplicity we now drop the index \(j\) and write \(v\) in place of \(v_j\). We set 
\[
M_0=\osc_{B_{R_0}} v\qquad \mbox{ and }\qquad \delta_0=\sqrt{\frac{\nu_0}{2}\,M_0^{2\,p+4\,\left(1-\frac{2}{p}\right)}},
\] 
where $\nu_0$ is the constant of Lemma \ref{lemmaDeGiorgi} for $R=R_0$ and \(\alpha=1/4\).
We construct by induction the sequence of triples \(\{R_n, M_n, \delta_n\}_{n\in \mathbb{N}}\) defined by
\[
M_n:=\osc_{B_{R_n}} v,\qquad \delta_n = \sqrt{\frac{\nu_n}{2}\,M_n^{2\,p+4\,\left(1-\frac{2}{p}\right)}}, \quad R_{n+1}=\delta_n\, R_n,
\]
and $\nu_n$ is the constant of Lemma \ref{lemmaDeGiorgi} for $R=R_n$ and \(\alpha=1/4\).
Since \(\delta_n<1/2\) for every \(n\in \mathbb{N}\) (see Remark \ref{oss:demi}), the sequence \(\{R_n\}_{n\in \mathbb{N}}\) is monotone decreasing and goes to \(0\). In order to conclude, we just need to prove that
\begin{equation}
\label{bof}
\lim_{n\to\infty} M_n=0.
\end{equation}
Observe that we can suppose $M_n>0$ for every $n\in\mathbb{N}$, otherwise there is nothing to prove.
We set
\[
I:=\left\{ n\in \mathbb{N}\, :\, \int_{B_{R_n}\setminus B_{R_{n+1}}} |\nabla v|^2\,dx \geq \frac{1}{512\,\pi}\,\nu_n\,M_{n}^{2\,p+4\,\left(1-\frac{4}{p}\right)}\,M_n^2\right\},
\]
and we have
\begin{equation}
\label{serie}
\begin{split}
\frac{\nu_0}{512\,\pi}\,\sum_{n\in I}\,M_{n}^{2\,p+2+4\,\left(1-\frac{2}{p}\right)} &\le \frac{1}{512\,\pi}\,\sum_{n\in I}\nu_n\,M_{n}^{2\,p+2+4\,\left(1-\frac{2}{p}\right)}\\
& \leq \sum_{n\in I} \int_{B_{R_n}\setminus B_{R_{n+1}}}|\nabla v|^2\,dx\leq \int_{B_{R_0}} |\nabla v|^2\,dx,
\end{split}
\end{equation}
thanks to the fact that $\nu_n\ge \nu_0>0$ for every $n\in\mathbb{N}$ (see Remark \ref{oss:demi}).
We now have two possibilies: either $I$ is infinite or it is finite.
If the first alternative occurs, then \eqref{serie} and the fact that $v\in W^{1,2}_{\rm loc}(\Omega)$ imply
\[
\lim_{I\ni n\to \infty} M_n = 0.
\]
This means that the monotone sequence \(\{M_n\}_{n\in \mathbb{N}}\) has a subsequence which converges to \(0\), thus we have \eqref{bof}  and this completes the proof in that case.
\par
Otherwise, if \(I\) is finite then there exists \(\ell\in \mathbb{N}\) such that for every \(n\geq \ell\) we have
\[
\int_{B_{R_n}\setminus B_{R_{n+1}}} |\nabla v|^2\,dx< \frac{1}{512\pi}\,\nu_n\,M_{n}^{2\,p+4\,\left(1-\frac{2}{p}\right)}\,M_n^2.
\]
By Lemma \ref{lm:5SV}, this in turn implies that 
\[
M_{n+1}=\osc_{B_{R_{n+1}}} v\leq \frac{7}{8}\,\osc_{B_{R_n}} v=\frac{7}{8}\, M_n,\qquad \mbox{ for every } n\ge \ell.
\] 
This again implies \eqref{bof}. The proof is complete.
\end{proof}
\subsection{Case $1<p\le 2$}
The case $1<p\le 2$ is similar, but more direct. This time Theorem \ref{teo:theorem-continuity-N2} follows from the following result, whose proof is exactly as above. It is sufficient to use Lemma \ref{lemmaDeGiorgibis} in place of Lemma \ref{lemmaDeGiorgi} and Lemma \ref{lm:5SVbis} in place of Lemma \ref{lm:5SV}. We leave the details to the reader.
\begin{prop} 
Let $1<p\le 2$, $x_0\in\Omega$ and \(R_0>0\) such that $B_{R_0}(x_0)\Subset \Omega$. We consider the family of balls $\{B_R(x_0)\}_{0<R\le R_0}$ centered at $x_0$.
Then we have
\[
\lim_{R\searrow 0} \left(\osc_{B_R(x_0)} U_{x_j}\right) =0,\qquad j=1,2.
\]
\end{prop}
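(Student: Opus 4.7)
The plan is to mimic exactly the argument carried out for the case $p>2$, with $U_{x_j}$ now playing the role of $v_j$, and with Lemma \ref{lemmaDeGiorgibis} and Lemma \ref{lm:5SVbis} replacing Lemma \ref{lemmaDeGiorgi} and Lemma \ref{lm:5SV}. Since the map $R\mapsto \osc_{B_R(x_0)} U_{x_j}$ is non-decreasing, it is enough to exhibit a decreasing sequence of radii $R_n\searrow 0$ along which the oscillation tends to $0$. For notational simplicity I drop $x_0$ and write $U_{x}$ for $U_{x_j}$.

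First I would construct inductively the triple $(R_n, M_n, \delta_n)$ by
\[
M_n:=\osc_{B_{R_n}} U_{x},\qquad \delta_n := \sqrt{\tfrac{\nu_n}{2}\,M_n^{2}},\qquad R_{n+1}:=\delta_n\,R_n,
\]
where $\nu_n$ is the constant in Lemma \ref{lemmaDeGiorgibis} applied on $B_{R_n}$ with $\alpha=1/4$. By Remark \ref{oss:demibis} we have $\delta_n<1/2$, so $R_n\searrow 0$, and the same remark gives the monotonicity $\nu_n\ge \nu_0>0$. The whole problem is thus reduced to showing $M_n\to 0$ (assuming $M_n>0$ for all $n$, else we are done).

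The dichotomy is provided by Lemma \ref{lm:5SVbis}. Introduce
\[
I:=\Big\{n\in\mathbb{N} \,:\, \int_{B_{R_n}\setminus B_{R_{n+1}}} |\nabla U_{x}|^2\,dx \ge \tfrac{1}{512\pi}\,\nu_n\, M_n^{4}\Big\}.
\]
By Theorem \ref{teo:apriorisub} we know that $U_{x}\in W^{1,2}_{\rm loc}(\Omega)$, so the integrals $\int_{B_{R_n}\setminus B_{R_{n+1}}} |\nabla U_{x}|^2$ are disjointly summable and bounded by $\int_{B_{R_0}}|\nabla U_{x}|^2\,dx <+\infty$. Combining this with $\nu_n\ge \nu_0$ yields
\[
\frac{\nu_0}{512\pi}\,\sum_{n\in I} M_n^{4} \le \sum_{n\in I}\int_{B_{R_n}\setminus B_{R_{n+1}}}|\nabla U_{x}|^2\,dx \le \int_{B_{R_0}}|\nabla U_{x}|^2\,dx<+\infty.
\]

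Finally I would split into two cases. If $I$ is infinite, the above estimate forces $M_n\to 0$ along $I$; by monotonicity of $\{M_n\}_{n\in\mathbb{N}}$ the whole sequence converges to $0$. If $I$ is finite, then there exists $\ell\in\mathbb{N}$ such that for every $n\ge\ell$ alternative $(\mathbf{B}_2)$ of Lemma \ref{lm:5SVbis} fails, so alternative $(\mathbf{B}_1)$ holds and gives
\[
M_{n+1}=\osc_{B_{R_{n+1}}} U_{x} \le \tfrac{7}{8}\,\osc_{B_{R_n}} U_{x}=\tfrac{7}{8}\, M_n,
\]
which again yields $M_n\to 0$. No step here should present a genuine obstacle — the substantive analytic content (the De Giorgi iteration and the dichotomy lemma) has already been carried out in Section \ref{sec:5}; the only mild point to verify is the uniform lower bound $\nu_n\ge\nu_0$, which is precisely what Remark \ref{oss:demibis} records.
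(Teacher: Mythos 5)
Your proposal is correct and coincides with the paper's intended argument: the paper itself states that the proof is "exactly as above" (i.e., as in the $p>2$ case), substituting Lemma \ref{lemmaDeGiorgibis} for Lemma \ref{lemmaDeGiorgi} and Lemma \ref{lm:5SVbis} for Lemma \ref{lm:5SV}, which is precisely what you carry out, including the correct exponents ($M_n^2$ in $\delta_n$, $M_n^4$ in the energy threshold) and the appeal to Remark \ref{oss:demibis} for $\delta_n<1/2$ and $\nu_n\ge\nu_0$.
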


\appendix

\section{Inequalities}

In the proof of Lemma \ref{lemmaDeGiorgibis} we crucially relied on the following double-sided estimate for the function
\[
F(t)=\frac{p}{2}\,\int_{\beta}^{t}|s|^{\frac{p-2}{2}}(s-\beta)_+\,ds, \quad t\in \mathbb{R}.
\]
\begin{lm}
\label{lemma_F} Let $\beta\in\mathbb{R}$ and $p> 2$.
There exist a constant \(C=C(p)>1\) such that for every \(t\in \mathbb{R}\),
\begin{equation}
\label{eq1453}
\frac{1}{C}\,(t-\beta)^{\frac{p+2}{2}}_+\leq F(t) \leq C\, \left(|t|^\frac{p-2}{2}+\left(\max\{0,-\beta\}\right)^\frac{p-2}{2}\right)\,(t-\beta)^2_+.
\end{equation}
\end{lm}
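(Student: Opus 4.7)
\medskip

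\noindent\textbf{Plan for the proof of Lemma \ref{lemma_F}.}
The function \(F\) is identically zero on \((-\infty, \beta]\) (the integrand \(|s|^{(p-2)/2}(s-\beta)_+\) vanishes for \(s\le \beta\)), and so is \((t-\beta)_+^{(p+2)/2}\); hence both inequalities are trivial there. Throughout we may therefore assume \(t>\beta\) and write \(\tau:=t-\beta>0\). After the change of variable \(u=s-\beta\), the quantity to estimate reads
\[
F(t) \;=\; \frac{p}{2}\int_0^{\tau}|u+\beta|^{\frac{p-2}{2}}\,u\,du.
\]

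\medskip

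\noindent\textbf{Upper bound.} For \(s\in[\beta,t]\) one has \(|s|\le \max\{|\beta|,|t|\}\le |t|+\max\{0,-\beta\}\). Since \(p\ge 2\), we can use the elementary inequality \((a+b)^{(p-2)/2}\le C(p)\,(a^{(p-2)/2}+b^{(p-2)/2})\) and obtain
\[
|s|^{\frac{p-2}{2}} \;\le\; C(p)\,\Bigl( |t|^{\frac{p-2}{2}} + \bigl(\max\{0,-\beta\}\bigr)^{\frac{p-2}{2}}\Bigr), \qquad s\in[\beta,t].
\]
Integrating against \((s-\beta)\) on \([\beta,t]\) produces the factor \((t-\beta)^2/2\), which delivers the right-hand inequality in \eqref{eq1453}.

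\medskip

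\noindent\textbf{Lower bound.} When \(\beta\ge 0\), we have \(|u+\beta|=u+\beta\ge u\), hence
\[
F(t) \;\ge\; \frac{p}{2}\int_0^{\tau}u^{\frac{p}{2}}\,du \;=\; \frac{p}{p+2}\,\tau^{\frac{p+2}{2}},
\]
which settles this case. For \(\beta<0\), the integrand can degenerate at the interior point \(u=-\beta\in(0,\tau)\), so we isolate a subinterval on which it is non-degenerate. The key observation is the triangle inequality
\[
|\beta|+|t| \;\ge\; |t-\beta| \;=\; \tau, \qquad\text{so}\qquad \max\{|\beta|,|t|\}\ge \tau/2.
\]
This forces one of the two following situations. \emph{(i) If \(|\beta|\ge \tau/2\):} for \(u\in[0,\tau/4]\) one has \(|u+\beta|\ge |\beta|-u\ge \tau/4\), and the subinterval has length \(\tau/4\). \emph{(ii) If \(|t|\ge \tau/2\) and \(|\beta|<\tau/2\):} then \(t>0\) and \(-\beta<\tau/2\), and for \(u\in[3\tau/4,\tau]\) one computes \(u+\beta\ge 3\tau/4+\beta\ge \tau/4>0\), so \(|u+\beta|\ge \tau/4\) on an interval of length \(\tau/4\) where moreover \(u\ge 3\tau/4\). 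In either case,
\[
\int_0^{\tau}|u+\beta|^{\frac{p-2}{2}}\,u\,du \;\ge\; (\tau/4)^{\frac{p-2}{2}}\int_{J}u\,du \;\ge\; c(p)\,\tau^{\frac{p+2}{2}},
\]
where \(J\) is the appropriate subinterval and \(c(p)>0\).

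\medskip

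\noindent\textbf{Main obstacle.} The only delicate point is the lower bound in the case \(\beta<0\), because \(|s|^{(p-2)/2}\) vanishes at \(s=0\in(\beta,t)\) whenever \(t>0\), so one cannot simply bound \(|s|^{(p-2)/2}\) from below uniformly on all of \([\beta,t]\). The trick is the dichotomy above: the global length \(\tau\) is controlled by \(2\max\{|\beta|,|t|\}\), so at least one of the endpoints drives both the size of the integrand and the length of an exploitable subinterval, yielding the full power \(\tau^{(p+2)/2}\) in every case.
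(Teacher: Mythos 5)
Your proof is correct, and it takes a genuinely different route from the one in the paper. The paper treats the cases \(\beta=0\), \(\beta>0\), \(\beta<0\) separately: for \(\beta>0\) it obtains the lower bound from a H\"older-inequality splitting of \(\left(\int_\beta^t(s-\beta)\,ds\right)^{p/2}\) and the upper bound by rescaling to the normalized profile \(F_+(X)=\tfrac{p}{2}\int_1^X\tau^{(p-2)/2}(\tau-1)\,d\tau\) and running second-order Taylor expansions; for \(\beta<0\) it rescales to \(F_-\) and, for the lower bound, invokes a soft compactness argument (the ratio \((X+1)^{(p+2)/2}/F_-(X)\) is continuous with finite limits at \(X\to(-1)^+\) and \(X\to+\infty\), hence bounded), which is non-quantitative. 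You instead work directly with pointwise bounds on the integrand after the shift \(u=s-\beta\): the upper bound follows from \(|s|\le\max\{|\beta|,|t|\}\le|t|+\max\{0,-\beta\}\) on \([\beta,t]\) together with \((a+b)^{(p-2)/2}\le C(p)(a^{(p-2)/2}+b^{(p-2)/2})\), and the lower bound from \(|u+\beta|\ge u\) when \(\beta\ge0\), or, when \(\beta<0\), from the dichotomy \(\max\{|\beta|,|t|\}\ge\tau/2\) which isolates a subinterval of length \(\tau/4\) (near \(u=0\) or near \(u=\tau\)) on which \(|u+\beta|\ge\tau/4\). Your argument is more elementary, avoids the rescaling and the case analysis on the sign of \(X\), and yields explicit constants throughout, including where the paper's lower bound is only qualitative; the paper's normalization \(F(t)=|\beta|^{(p+2)/2}F_\pm(t/|\beta|)\) is perhaps more systematic but buys nothing essential here. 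All steps of your dichotomy check out (in particular \(t=\tau+\beta>\tau/2>0\) in your case (ii), so the subinterval \([3\tau/4,\tau]\) indeed lies where \(u+\beta\ge\tau/4\)).
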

\begin{proof}
Since \(F(t)=0\) when \(t\leq \beta\), both inequalities are true in this case. Thus let us assume that \(t>\beta\). Moreover, if \(\beta=0\),
\[
F(t)=\frac{p}{2}\,\int_{0}^{t}s^{\frac{p-2}{2}}\,s\,ds = \frac{p}{p+2}\,t^{\frac{p+2}{2}},\qquad \mbox{ for }t>0,
\]
which implies the result. 
\vskip.2cm\noindent
{\bf Case \(\beta>0\)}. By H\"older's inequality
\[
\begin{split}
\frac{(t-\beta)^{p}_+}{2^\frac{p}{2}}=\left(\int_{\beta}^t (s-\beta)_+\,ds\right)^\frac{p}{2}&=\left(\int_{\beta}^t s^{\frac{p-2}{p}}\,\frac{(s-\beta)_+}{s^{\frac{p-2}{p}}}\,ds\right)^\frac{p}{2}\\
&\le \left(\int_{\beta}^t s^\frac{p-2}{2}\,(s-\beta)_+\,ds\right)\,\left(\int_{\beta}^t \frac{(s-\beta)_+}{s}\,ds\right)^\frac{p-2}{2}\\
&\le \frac{2}{p}\,F(t)\,(t-\beta)^\frac{p-2}{2}_+,
\end{split}
\]
where we used that $(s-\beta)_+\le s$ and this gives the lower bound in \eqref{eq1453}. As for the upper bound, by the change of variables \(\tau=s/\beta\) one has
\[
F(t)= \beta^{\frac{p+2}{2}}\, F_+\left(\frac{t}{\beta}\right),\qquad \mbox{ where }\ F_+(X) =\frac{p}{2}\, \int_{1}^X \tau^{\frac{p-2}{2}}\,(\tau-1) \,d\tau, \qquad \tau>1.
\]
Observe that
\[
F_+(X)= \frac{p}{p+2}\,\left(X^{\frac{p+2}{2}}-1\right) -\left(X^\frac{p}{2}-1\right),\qquad X>1.
\]
Moreover, by convexity of the function $X\mapsto X^{p/2}$ we have
\[
-\left(X^\frac{p}{2}-1\right)\le -\frac{p}{2}\,(X-1),
\]
while a second order Taylor expansion gives
\[
\frac{p}{p+2}\,\left(X^{\frac{p+2}{2}}-1\right)=\frac{p}{2}\,(X-1)+\frac{p^2}{4}\,\int_1^X s^\frac{p-2}{2}\,(X-s)\,ds\le \frac{p}{2}(X-1)+\frac{p^2}{8}\,X^\frac{p-2}{2}\,(X-1)^2.
\]
Thus we obtain
\[
F_+(X)\le \frac{p^2}{8}\,X^\frac{p-2}{2}\,(X-1)^2,\qquad X>1,
\]
and finally for $t>\beta$
\[
F(t)=\beta^{\frac{p+2}{2}}\, F_+\left(\frac{t}{\beta}\right)\le \frac{p^2}{8}\,t^\frac{p-2}{2}\,(t-\beta)^2,
\]
which proves the upper bound in \eqref{eq1453}.
\vskip.2cm\noindent
{\bf Case $\beta<0$}.
This case is slightly more complicated. We introduce the function
\[
F_-(X) =\frac{p}{2}\, \int_{-1}^X |s|^{\frac{p-2}{2}}\,(s+1) \,ds = \frac{p}{p+2}\,\left(|X|^{\frac{p+2}{2}}-1\right) +\left(|X|^\frac{p-2}{2}\,X+1\right), \qquad X>-1.
\]
It is sufficient to prove that there exists \(C>1\) such that
\begin{equation}
\label{modified}
\frac{1}{C}\,(X+1)^{\frac{p+2}{2}} \leq F_-(X) \leq C\,\left(|X|^\frac{p-2}{2}+1\right)\,(X+1)^2.
\end{equation}
Indeed, \(F(t) =|\beta|^{(p+2)/2}\, F_-(t/|\beta|)\) and this would give
\[
\frac{1}{C}\,(t-\beta)^{\frac{p+2}{2}}\leq F(t) \leq C\, \left(|t|^\frac{p-2}{2} +|\beta|^\frac{p-2}{2}\right)\,(t-\beta)^2,
\]
as desired. 
\par
The upper bound in \eqref{modified} for $-1<X<0$ can be obtained as before, by using a second order Taylor expansion for the first term and using that $\tau\mapsto |\tau|^{(p-2)/2}\,\tau$ is concave on $-1<\tau<0$. This gives
\[
\begin{split}
F_-(X)&=\frac{p}{p+2}\,\left(|X|^{\frac{p+2}{2}}-1\right) +\left(|X|^\frac{p-2}{2}\,X+1\right)\\
&\le -\frac{p}{2}\,(X+1)+\frac{p^2}{4}\,\int_{-1}^X |s|^\frac{p-2}{2}\,(X-s)\,ds+\frac{p}{2}\,(X+1)\\
&\le \frac{p^2}{8}\,(X+1)^2.
\end{split}
\]
Observe that the upper bound is trivial for $0\le X\le 1$, since
\[
\frac{p}{p+2}\,\left(|X|^{\frac{p+2}{2}}-1\right) +\left(|X|^\frac{p-2}{2}\,X+1\right)\le 2\le 2\,\left(|X|^\frac{p-2}{2}+1\right)\,(X+1)^2.
\]
Finally, for $X>1$ we still use a second order Taylor expansion for the first term and the elementary inequality
\[
X^\frac{p}{2}+1\le \frac{1}{2}\,X^\frac{p-2}{2}\,(X+1)^2,
\]
for the second one. These yield
\[
\begin{split}
F_-(X)&\le \frac{p^2}{4}\,\int_{-1}^X |s|^\frac{p-2}{2}\,(X-s)\,ds+\frac{1}{2}\,X^\frac{p-2}{2}(X+1)^2\le \left(\frac{p^2}{8}+\frac{1}{2}\right)\,X^\frac{p-2}{2}\,(X+1)^2.
\end{split}
\]
In order to prove the lower bound, we just observe that the function
\[
X\mapsto \frac{(X+1)^\frac{p+2}{2}}{F_-(X)},\qquad X>-1
\]
is positive continuous on $(-1,+\infty)$ and such that
\[
\lim_{X\to (-1)^+} \frac{(X+1)^\frac{p+2}{2}}{F_-(X)}<+\infty \qquad \mbox{ and }\qquad \lim_{X\to +\infty} \frac{(X+1)^\frac{p+2}{2}}{F_-(X)}<+\infty.
\]
Thus it is bounded on $(-1,+\infty)$ and this concludes the proof of the lower bound.
\end{proof}

\begin{lm}
Let $1<q\le 2$, for every $z_0,z_1\in\mathbb{R}^N$ we have
\begin{equation}
\label{dibene}
\Big||z_0|^{q-2}\,z_0-|z_1|^{q-2}\,z_1\Big|\le 2^{2-q}\,|z_0-z_1|^{q-1}.
\end{equation}
\end{lm}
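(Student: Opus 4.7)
My plan is to reduce this vector inequality to two elementary scalar ones by introducing $t=\cos\theta$, the cosine of the angle between $z_0$ and $z_1$. First I would dispose of the trivial cases: if $z_0=z_1$ the claim is immediate, and if one vector vanishes (say $z_1=0$) the inequality becomes $|z_0|^{q-1}\le 2^{2-q}|z_0|^{q-1}$, which holds because $2^{2-q}\ge 1$ for $q\le 2$. Henceforth assume $z_0,z_1\ne 0$.

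Set $a=|z_0|$, $b=|z_1|$, and $t=(z_0\cdot z_1)/(ab)\in[-1,1]$. A direct expansion gives
\[
\bigl||z_0|^{q-2}z_0-|z_1|^{q-2}z_1\bigr|^2=a^{2(q-1)}+b^{2(q-1)}-2a^{q-1}b^{q-1}t=:L(t),
\]
and $|z_0-z_1|^2=a^2+b^2-2abt$, so that \eqref{dibene} is equivalent to $L(t)\le R(t):=2^{2(2-q)}(a^2+b^2-2abt)^{q-1}$. The critical observation is that $L$ is affine in $t$, while $R$ is concave in $t$ (as the composition of the concave map $s\mapsto s^{q-1}$ on $[0,+\infty)$, valid since $0<q-1\le 1$, with an affine function taking values in $[(a-b)^2,(a+b)^2]$). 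Consequently $R-L$ is concave on $[-1,1]$, so it attains its infimum at an endpoint; it is therefore enough to verify the inequality at $t=\pm 1$, which correspond to the collinear (scalar) cases.

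At $t=1$ the desired bound reduces to $|a^{q-1}-b^{q-1}|\le 2^{2-q}|a-b|^{q-1}$, which follows from the subadditivity of $s\mapsto s^{q-1}$ on $[0,+\infty)$ (valid since $0<q-1\le 1$) combined with $2^{2-q}\ge 1$. At $t=-1$ the bound reduces to $a^{q-1}+b^{q-1}\le 2^{2-q}(a+b)^{q-1}$; setting $s=a/(a+b)\in[0,1]$ this becomes $s^{q-1}+(1-s)^{q-1}\le 2^{2-q}$, and the left-hand side is concave and symmetric about $s=1/2$, so its maximum on $[0,1]$ equals its value at $s=1/2$, namely $2\cdot 2^{1-q}=2^{2-q}$. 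The only conceptual step is choosing $t=\cos\theta$ as the parameter so that one side of the squared inequality becomes affine and the other concave; the sharp constant $2^{2-q}$ then appears naturally as the maximum of $s^{q-1}+(1-s)^{q-1}$ on $[0,1]$.
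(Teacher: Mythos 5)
Your proof is correct, and it takes a genuinely different route from the one in the paper. The paper follows DiBenedetto's classical argument: it writes the difference as $\int_0^1 \tfrac{d}{dt}\big(|z_t|^{q-2}z_t\big)\,dt$ along the segment $z_t=(1-t)\,z_0+t\,z_1$, bounds $|z_t|\ge \big||z_0|-t\,|z_1-z_0|\big|$, and then evaluates the resulting one-dimensional integral explicitly, splitting into the two cases $|z_0|\ge|z_1-z_0|$ and $|z_0|<|z_1-z_0|$. You instead square both sides and observe that, in the variable $t=\cos\theta$, the left-hand side is affine while the right-hand side is concave (the composition of the concave map $s\mapsto s^{q-1}$, $0<q-1\le 1$, with an affine map taking values in $[0,\infty)$); this reduces the vector inequality to the two collinear scalar cases $t=\pm 1$, which you settle by subadditivity of $s\mapsto s^{q-1}$ and by maximizing $s^{q-1}+(1-s)^{q-1}$ on $[0,1]$. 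Your argument avoids the integrand $|z_t|^{q-2}$, which is singular when $q<2$, and makes transparent why the constant $2^{2-q}$ is sharp (equality at $t=-1$ with $|z_0|=|z_1|$, i.e. $z_1=-z_0$), whereas the paper's integral method is the standard template that also yields the companion monotonicity estimates. Both proofs are complete and give the same constant.
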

\begin{proof}
The proof is the same as in \cite[Lemma 4.4]{DiB}. We first observe that if $z_1=z_0$ there is nothing to prove, thus we can suppose $|z_1-z_0|>0$.
Let us set 
\[
z_t=(1-t) z_0+t\,z_1,\qquad t\in[0,1],
\]
then we have
\[
|z_0|^{q-2}\,z_0-|z_1|^{q-2}\,z_1=\int_0^1 \frac{d}{dt} \left(|z_t|^{q-2}\,z_t\right)\,dt=(q-1)\,\int_0^1 |z_t|^{q-2}\,(z_1-z_0)\,dt,
\]
which implies
\begin{equation}
\label{dibenemedio}
\Big||z_0|^{q-2}\,z_0-|z_1|^{q-2}\,z_1\Big|\le (q-1)\,|z_1-z_0|\,\int_0^1 \Big||z_0|-t\,|z_1-z_0|\Big|^{q-2}\,dt.
\end{equation}
We now distinguish two cases:
\[
\mbox{ either }\quad |z_0|\ge |z_1-z_0|\qquad \mbox{ or }\qquad |z_0|< |z_1-z_0|.
\]
In the first case, we have
\[
\begin{split}
\int_0^1 \Big||z_0|-t\,|z_1-z_0|\Big|^{q-2}\,dt=\int_0^1 \Big(|z_0|-t\,|z_1-z_0|\Big)^{q-2}\,dt&=\frac{|z_0|^{q-1}-\Big(|z_0|-|z_1-z_0|\Big)^{q-1}}{(q-1)\,|z_1-z_0|}\\
&\le \frac{|z_1-z_0|^{q-2}}{q-1},
\end{split}
\]
which inserted in \eqref{dibenemedio} gives the desired conclusion. In the second case, let $0<\kappa<1$ be such that
\[
|z_0|=\kappa\,|z_0-z_1|,
\]
then we have
\[
\begin{split}
\int_0^1 \Big||z_0|-t\,|z_1-z_0|\Big|^{q-2}\,dt&=\int_0^\kappa \Big(|z_0|-t\,|z_1-z_0|\Big)^{q-2}\,dt+\int_\kappa^1 \Big(t\,|z_1-z_0|-|z_0|\Big)^{q-2}\,dt\\
&=\frac{|z_0|^{q-1}}{(q-1)\,|z_1-z_0|}+\frac{\Big(|z_1-z_0|-|z_0|\Big)^{q-1}}{(q-1)\,|z_1-z_0|}\\
&\le 2^{2-q}\,\frac{|z_1-z_0|^{q-2}}{q-1}.
\end{split}
\]
In view of \eqref{dibenemedio}, this gives the desired conclusion.
\end{proof}

\begin{coro}
\label{coro:dibene}
Let $1<p\le 2$, for every $\varepsilon\ge 0$ and every $t,s\in\mathbb{R}$ we have
\[
\Big|(\varepsilon+t^2)^{\frac{p-2}{4}}\,t-(\varepsilon+s^2)^{\frac{p-2}{4}}\,s\Big|\le 2^\frac{2-p}{2}\,|t-s|^\frac{p}{2},\qquad t,s\in\mathbb{R}.
\]
\end{coro}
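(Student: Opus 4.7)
The strategy is to lift the scalar quantity $(\varepsilon+t^2)^{(p-2)/4}\,t$ to a two-dimensional vector and apply the previous lemma. First, I would introduce the exponent $q=(p+2)/2$, which satisfies $1<q\le 2$ precisely when $1<p\le 2$. With this choice, one has the matching exponents
\[
\frac{q-2}{2}=\frac{p-2}{4},\qquad q-1=\frac{p}{2},\qquad 2-q=\frac{2-p}{2},
\]
which suggest that the inequality to be proved is exactly the preceding lemma in disguise.

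Next, for given $t,s\in\mathbb{R}$ and $\varepsilon\ge 0$, I would set
\[
z_0=(\sqrt{\varepsilon},t),\qquad z_1=(\sqrt{\varepsilon},s)\in\mathbb{R}^2,
\]
so that $|z_0|^2=\varepsilon+t^2$ and $|z_1|^2=\varepsilon+s^2$. A direct computation shows that the second component of the vector $|z_0|^{q-2}\,z_0-|z_1|^{q-2}\,z_1$ equals $(\varepsilon+t^2)^{(p-2)/4}\,t-(\varepsilon+s^2)^{(p-2)/4}\,s$, while the first component is $\sqrt{\varepsilon}\,[(\varepsilon+t^2)^{(p-2)/4}-(\varepsilon+s^2)^{(p-2)/4}]$. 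Since the Euclidean norm dominates the absolute value of any component, this yields
\[
\Big|(\varepsilon+t^2)^{(p-2)/4}\,t-(\varepsilon+s^2)^{(p-2)/4}\,s\Big|\le \Big||z_0|^{q-2}\,z_0-|z_1|^{q-2}\,z_1\Big|.
\]

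Finally, I would invoke the previous lemma with $N=2$ and this value of $q$, together with the identity $|z_0-z_1|=|(0,t-s)|=|t-s|$, to conclude
\[
\Big||z_0|^{q-2}\,z_0-|z_1|^{q-2}\,z_1\Big|\le 2^{2-q}\,|z_0-z_1|^{q-1}=2^{(2-p)/2}\,|t-s|^{p/2},
\]
which combined with the previous display gives the corollary. No real obstacle is anticipated: the proof is essentially a bookkeeping exercise of matching exponents via the choice $q=(p+2)/2$ and lifting to $\mathbb{R}^2$ so that the parameter $\varepsilon$ becomes a genuine coordinate rather than a regularizing shift.
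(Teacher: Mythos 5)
Your proof is correct and is essentially identical to the paper's argument: both apply the preceding lemma in $\mathbb{R}^2$ with $q=(p+2)/2$ to the vectors obtained by adjoining the coordinate $\sqrt{\varepsilon}$, and then drop the extra component using that the Euclidean norm dominates each coordinate. The only (immaterial) difference is the order in which you list the two components.
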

\begin{proof}
We use \eqref{dibene} with the choices
\[
N=2,\qquad q=\frac{p+2}{2},\qquad z_0=\left(t,\sqrt{\varepsilon}\right)\qquad \mbox{ and }\qquad z_1=\left(s,\sqrt{\varepsilon}\right).
\]
This implies
\[
\Big|(\varepsilon+t^2)^{\frac{p-2}{4}}\,\left(t,\sqrt{\varepsilon}\right)-(\varepsilon+s^2)^{\frac{p-2}{4}}\,\left(s,\sqrt{\varepsilon}\right)\Big|\le 2^\frac{2-p}{2}\,|t-s|^\frac{p}{2}.
\]
By further observing that 
\[
\Big|(\varepsilon+t^2)^{\frac{p-2}{4}}\,\left(t,\sqrt{\varepsilon}\right)-(\varepsilon+s^2)^{\frac{p-2}{4}}\,\left(s,\sqrt{\varepsilon}\right)\Big|\ge \Big|(\varepsilon+t^2)^{\frac{p-2}{4}}\,t-(\varepsilon+s^2)^{\frac{p-2}{4}}\,s\Big|,
\]
we get the conclusion.
\end{proof}
\section{Some general tools}

In the proof of Lemmas \ref{lemmaDeGiorgi} and \ref{lemmaDeGiorgibis}, we used the following classical result. This can be found for example in \cite[Lemma 7.1]{Gi}.
\begin{lm}
\label{lemma-Santambrogio-Vespri-1}
If \(\{Y_n\}_{n\in\mathbb{N}}\) is a sequence of nonnegative numbers satisfying
\[
Y_{n+1}\leq c\, b^n\, Y_{n}^{1+\beta}, \quad  Y_1\leq c^{-1/\beta}b^{-(\beta+1)/\beta^2}, \quad \mbox{ for some }c,\, b,\,\beta>0,
\]
then \(\lim_{n\to+\infty} Y_n=0\).
\end{lm}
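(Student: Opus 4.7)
The plan is to prove the lemma by seeking a geometric upper bound of the form $Y_n\le K\,\lambda^n$ with suitably chosen $K>0$ and $\lambda\in(0,1)$, and to verify it by induction on $n$. The correct choice of $K$ and $\lambda$ is forced by plugging the ansatz into the recursion: if $Y_n\le K\lambda^n$, then
\[
Y_{n+1}\le c\,b^n\,(K\lambda^n)^{1+\beta} = c\,K^{1+\beta}\,\lambda^n\,(b\,\lambda^\beta)^n,
\]
and we want the right-hand side to be bounded by $K\lambda^{n+1}$. The factor $(b\,\lambda^\beta)^n$ must not grow, which forces $b\,\lambda^\beta\le 1$; the residual condition $c\,K^\beta\,\lambda^{-1}\le 1$ then fixes $K$. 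Choosing equality in both constraints gives the natural candidate
\[
\lambda=b^{-1/\beta},\qquad K=c^{-1/\beta}\,b^{-1/\beta^2},
\]
so that $K\lambda=c^{-1/\beta}\,b^{-(\beta+1)/\beta^2}$, which matches exactly the assumed bound on $Y_1$.

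Next I would carry out the induction. The base case $n=1$ is precisely the hypothesis $Y_1\le K\lambda$. For the inductive step, assuming $Y_n\le K\lambda^n$, substitute in the recursion and use the two algebraic identities $b\,\lambda^\beta=1$ and $c\,K^\beta=\lambda$ (both immediate from the definitions of $K$ and $\lambda$) to obtain
\[
Y_{n+1}\le c\,b^n\,K^{1+\beta}\,\lambda^{n(1+\beta)}
= K\cdot(c\,K^\beta)\cdot(b\,\lambda^\beta)^n\cdot\lambda^n = K\,\lambda\cdot\lambda^n = K\,\lambda^{n+1}.
\]
This closes the induction.

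Finally, to conclude $\lim_{n\to\infty}Y_n=0$, one observes that $\lambda=b^{-1/\beta}\in(0,1)$ in the nontrivial case $b>1$, whence $K\lambda^n\to 0$. In the degenerate case $b\le 1$, the recursion simplifies to $Y_{n+1}\le c\,Y_n^{1+\beta}$ with $Y_1\le c^{-1/\beta}$, which is a strictly contracting map on $[0,c^{-1/\beta}]$: monotonicity forces $\{Y_n\}$ to converge to some $L\in[0,c^{-1/\beta}]$ satisfying $L\le c\,L^{1+\beta}$, which together with the upper bound on $L$ leaves no option but $L=0$. There is no real obstacle in this proof: the entire subtlety is in discovering the sharp ansatz $Y_n\le K\lambda^n$ with the specific exponents $\lambda=b^{-1/\beta}$, $K=c^{-1/\beta}b^{-1/\beta^2}$, after which the induction is purely mechanical.
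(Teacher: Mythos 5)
Your induction with the ansatz $Y_n\le K\,\lambda^n$, where $\lambda=b^{-1/\beta}$ and $K=c^{-1/\beta}\,b^{-1/\beta^2}$, is correct, and it is essentially the standard argument: the paper does not prove this lemma at all but cites \cite[Lemma 7.1]{Gi}, whose proof is exactly this kind of induction on a geometric bound. For $b>1$ (which is what occurs in both applications in the paper, with $b=2^{p+4}$ and $b=16$) your proof is complete.

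The paragraph on the ``degenerate case'' $b\le 1$, however, contains genuine errors and cannot be repaired, because the statement itself fails there. First, when $b<1$ the hypothesis $Y_1\le c^{-1/\beta}\,b^{-(\beta+1)/\beta^2}$ is \emph{weaker} than $Y_1\le c^{-1/\beta}$, since $b^{-(\beta+1)/\beta^2}>1$; so you are not entitled to the starting bound you use. Second, even granting $Y_1\le c^{-1/\beta}$, your limiting argument only yields $L\le c\,L^{1+\beta}$ together with $L\le c^{-1/\beta}$, which leaves the two possibilities $L=0$ and $L=c^{-1/\beta}$: the endpoint $c^{-1/\beta}$ is a fixed point of $t\mapsto c\,t^{1+\beta}$, so it is not excluded, and ``no option but $L=0$'' is false. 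In fact the lemma as written is false for $b\le 1$: with $b=1$ the constant sequence $Y_n\equiv c^{-1/\beta}$ satisfies both hypotheses with equality and does not tend to $0$, and for $b<1$ the sequence defined by equality in the recursion with $Y_1=c^{-1/\beta}\,b^{-(\beta+1)/\beta^2}$ is exactly $Y_n=c^{-1/\beta}\,b^{-1/\beta^2}\,b^{-n/\beta}$, which diverges. The statement should simply be read with $b>1$, as in \cite[Lemma 7.1]{Gi}; the correct move is to drop your degenerate case rather than to argue it.
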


The next lemma is a Fubini-type result on the convergence of Sobolev functions.
We denote by $\mathcal{H}^1$ the one-dimensional Hausdorff measure.
\begin{lm}
\label{lm:dellafrancesca}
Let $0<\tau<1$ and $1\le p<\infty$. Let $B_R(x_0)\subset\mathbb{R}^2$ be the disc centered at $x_0$ with radius $R>0$ and let $\{u_n\}_{n\in\mathbb{N}}\subset W^{\tau,p}(B_R(x_0))$ be a sequence strongly converging to $0$, i.e. such that
\[
\lim_{n\to\infty}\left[\int_{B_R(x_0)} |u_n|^p\,dx+\int_{B_R(x_0)}\int_{B_R(x_0)} \frac{|u_n(x)-u_n(y)|^p}{|x-y|^{2+\tau\,p}}\,dx\,dy\right]=0.
\]
Then there exists a subsequence \(\{u_{n_i}\}_{i\in\mathbb{N}}\) such that for almost every $r\in[0,R]$, $\{u_{n_i}\}_{i\in\mathbb{N}}$ strongly converges to $0$ in $W^{\tau,p}(\partial B_r(x_0))$, i.e.
\[
\lim_{i\to\infty}\left[\int_{\partial B_r(x_0)} |u_{n_i}|^p\,d\mathcal{H}^1+\int_{\partial B_r(x_0)}\int_{\partial B_r(x_0)} \frac{|u_{n_i}(x)-u_{n_i}(y)|^p}{|x-y|^{1+\tau\,p}}\,d\mathcal{H}^{1}(x)\,d\mathcal{H}^{1}(y)\right]=0.
\]
\end{lm}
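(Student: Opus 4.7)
The approach is a Fubini-type argument. For each $n$, define the non-negative measurable function
\[
G_n(r):=\int_{\partial B_r(x_0)}|u_n|^p\,d\mathcal{H}^1+\int_{\partial B_r(x_0)}\int_{\partial B_r(x_0)}\frac{|u_n(x)-u_n(y)|^p}{|x-y|^{1+\tau p}}\,d\mathcal{H}^1(x)\,d\mathcal{H}^1(y),
\]
on $[0,R]$. The plan is to show that $\int_0^R G_n(r)\,dr\to 0$ as $n\to +\infty$; then $G_n\to 0$ in $L^1([0,R])$, hence some subsequence $\{G_{n_i}\}$ converges pointwise to $0$ for almost every $r\in[0,R]$, which is precisely the conclusion.

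The first part of $\int_0^R G_n(r)\,dr$ is immediately controlled: by the coarea formula applied to $x\mapsto|x-x_0|$,
\[
\int_0^R\int_{\partial B_r(x_0)}|u_n|^p\,d\mathcal{H}^1\,dr=\int_{B_R(x_0)}|u_n|^p\,dx\longrightarrow 0
\]
by hypothesis. The substantial step is the Gagliardo-type estimate
\[
\int_0^R\int_{\partial B_r(x_0)}\int_{\partial B_r(x_0)}\frac{|u_n(x)-u_n(y)|^p}{|x-y|^{1+\tau p}}\,d\mathcal{H}^1\,d\mathcal{H}^1\,dr\le C(\tau,p)\,[u_n]_{W^{\tau,p}(B_R(x_0))}^p,
\]
which I would derive by decomposing $B_R(x_0)\setminus\{x_0\}$ into the dyadic annuli $A_k:=B_{2^{-k}R}(x_0)\setminus\overline{B_{2^{-k-1}R}(x_0)}$, $k\ge 0$, and working on each $A_k$ separately. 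The dilation $x\mapsto 2^k(x-x_0)/R$ reduces $A_k$ to the fixed reference annulus $A_0:=B_1(0)\setminus\overline{B_{1/2}(0)}$, and both sides of the desired inequality restricted to the radii coming from $A_k$ scale with the same factor $R^{-\tau p}$ under this dilation. So it is enough to prove the estimate on $A_0$.

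On $A_0$ I would pass to polar coordinates $(r,\theta)\in[1/2,1]\times\mathbb{R}/(2\pi\mathbb{Z})$. Since radii are bounded above and below by positive constants, this is a bi-Lipschitz change of variables with universal constants: the Euclidean chord distance on each $\partial B_r\subset A_0$ is comparable to arc-length uniformly in $r\in[1/2,1]$, and the two-dimensional Gagliardo seminorm on $A_0$ is equivalent (with $C=C(\tau,p)$) to its counterpart on the flat product $[1/2,1]\times\mathbb{R}/(2\pi\mathbb{Z})$. The goal then reduces to the classical Fubini-type property for fractional Sobolev spaces on a Cartesian product $\Omega_1\times\Omega_2\subset\mathbb{R}^2$,
\[
\int_{\Omega_1}[f(r,\cdot)]_{W^{\tau,p}(\Omega_2)}^p\,dr\le C(\tau,p)\,[f]_{W^{\tau,p}(\Omega_1\times\Omega_2)}^p,
\]
which can be proved, for instance, by dominating each axis-parallel increment $|f(x+he_i)-f(x)|^p/|h|^{1+\tau p}$ by an average of the full Gagliardo integrand over the ball $B(x+he_i,|h|)$ via the triangle inequality, followed by a direct integration. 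Scaling back to $A_k$ and summing over $k$---the cross-annulus pairs $A_j\times A_k$ ($j\neq k$) contributing non-negatively to $[u_n]_{W^{\tau,p}(B_R(x_0))}^p$ and hence freely discardable---yields the claimed bound.

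The main technical point to be checked is that the constants produced by the polar change of coordinates and by the arc/chord comparison are genuinely independent of $k$; this is automatic from the dyadic scaling, since each annulus $A_k$ is a homothetic copy of the fixed $A_0$. Everything else is routine.
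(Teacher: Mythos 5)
Your argument is correct and follows the same Fubini-type philosophy as the paper (bound the radial integral of the circle seminorms by the planar Gagliardo seminorm, deduce $L^1$ convergence of $r\mapsto G_n(r)$, then extract an a.e.\ convergent subsequence), but the implementation is genuinely different. The paper works with a weighted quantity $\mathcal{W}_\varepsilon(u)$ supported on $\{\varrho>\varepsilon\}$, extends $u_n$ to all of $\mathbb{R}^2$ via \cite[Lemma 7.45]{Ad}, proves $\mathcal{W}_\varepsilon(u)\le C\varepsilon^{-1}[u]^p_{W^{\tau,p}(\mathbb{R}^2)}$ by an explicit computation with the intermediate point $(\varrho+t)\,e^{i(\vartheta+\omega)/2}$, and then must run a diagonal extraction over a sequence $\varepsilon_k\searrow 0$ because the constant blows up at the center. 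Your dyadic decomposition into the annuli $A_k$, combined with the exact scale invariance of both sides of the per-annulus estimate, yields the cleaner bound
\[
\int_0^R [u_n]^p_{W^{\tau,p}(\partial B_r(x_0))}\,dr\le C(\tau,p)\,[u_n]^p_{W^{\tau,p}(B_R(x_0))},
\]
with a constant independent of $R$ and valid down to $r=0$; this removes both the extension step and the diagonal argument, and reduces the core estimate to the classical slicing lemma on a product domain, whose proof by averaging over intermediate points is morally the computation the paper performs by hand. Two small points you should tidy up: the common scaling factor of the two sides under $x\mapsto 2^k(x-x_0)/R$ is $(2^{-k}R)^{\tau p-2}$, not $R^{-\tau p}$ (harmless, since only the equality of the two factors matters); and in the slicing lemma on $[1/2,1]\times(\mathbb{R}/2\pi\mathbb{Z})$ the averaging ball $B(x+he_i,|h|)$ can protrude from the domain in the radial direction, so the averaging region must be shifted to stay inside $A_0$ (routine on a Lipschitz product, and for chord distances comparable to the diameter one can average over a fixed region and absorb the contribution into the seminorm since $|x-z|$ is bounded). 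With these standard adjustments the proof is complete.
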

\begin{proof}
Let us consider the convergence of the double integral, that for the $L^p$ norm being similar and simpler. Without loss of generality, we can assume $x_0=0$, then we omit to precise the center of the ball. 
We use polar coordinates $x=\varrho\,e^{i\,\vartheta}$. We need to show that up to a subsequence,
\begin{equation}
\label{chaud}
\lim_{n\to\infty}[u_n]^p_{W^{\tau,p}(\partial B_r)}=\int_{[0,2\,\pi]\times[0,2\,\pi]} \frac{|u_n(\varrho\,e^{i\,\vartheta})-u_n(\varrho\,e^{i\,\omega})|^p}{|e^{i\,\vartheta}-e^{i\,\omega}|^{1+\tau\,p}}\,d\vartheta\,d\omega=0,\quad \mbox{ for a.\,e. }\varrho\in[0,R].
\end{equation}
For every $u\in W^{\tau,p}(\mathbb{R}^2)$ and $\varepsilon>0$, we introduce 
\[
\mathcal{W}_\varepsilon(u):=\int_\varepsilon^\infty \int_{[0,2\,\pi]\times[0,2\,\pi]} \frac{|u(\varrho\,e^{i\,\vartheta})-u(\varrho\,e^{i\,\omega})|^p}{|e^{i\,\vartheta}-e^{i\,\omega}|^{1+\tau\,p}}\,d\vartheta\,d\omega\,\frac{\varrho\,d\varrho}{\varrho^{1+\tau\,p}}.
\]
We claim that
\begin{equation}
\label{Ar}
\mathcal{W}_\varepsilon(u)\le \frac{C}{\varepsilon}\,[u]_{W^{\tau,p}(\mathbb{R}^2)}^p=\frac{C}{\varepsilon}\,\int_{\mathbb{R}^2}\int_{\mathbb{R}^2} \frac{|u(x)-u(y)|^p}{|x-y|^{2+\tau\,p}}\,dx\,dy,
\end{equation}
for some constant $C=C(p,\tau)>0$. Let us assume \eqref{Ar} for a moment and explain how to conclude: we can extend $\{u_n\}_{n\in\mathbb{N}}$ to a sequence $\{\widetilde u_n\}_{n\in\mathbb{N}}\subset W^{\tau,p}(\mathbb{R}^2)$ such that
\[
\widetilde u_n=u_n,\ \mbox{ on } B_R\qquad \mbox{ and }\qquad [\widetilde u_n]_{W^{\tau,p}(\mathbb{R}^2)}^p\le C\,[u_n]_{W^{\tau,p}(B_R)}^p,
\]
see \cite[Lemma 7.45]{Ad}.
The latter and \eqref{Ar} imply that
\[
\lim_{n\to\infty} \mathcal{W}_\varepsilon(\widetilde u_n)=0,\qquad \mbox{ for every }\varepsilon>0.
\]
By definition of $\mathcal{W}_\varepsilon$, this means that the sequence of functions 
\[
f_n(\varrho)=\frac{\varrho}{\varrho^{1+\tau\,p}}\int_{[0,2\,\pi]\times[0,2\,\pi]} \frac{|u_n(\varrho\,e^{i\,\vartheta})-u_n(\varrho\,e^{i\,\omega})|^p}{|e^{i\,\vartheta}-e^{i\,\omega}|^{1+\tau\,p}}\,d\vartheta\,d\omega,
\]
converges to \(0\) in \(L^{p}((\varepsilon, R))\). Hence, there exists a subsequence \(\{f_{n_i}\}_{i\in \mathbb{N}}\) which converges almost everywhere to \(0\) on \((\varepsilon,R)\). By taking a sequence  \(\{\varepsilon_k\}_{k\in \mathbb{N}}\) converging to \(0\) and repeating the above argument for each \(\varepsilon_k\), a diagonal argument leads to the existence of a subsequence still denoted by \(\{f_{n_i}\}_{i\in \mathbb{N}}\) which converges almost everywhere to \(0\) on \((0,R)\). Equivalently,  \(\{u_{n_i}\}_{i\in \mathbb{N}}\) satisfies \eqref{chaud} for almost every $\varrho\in[0,R]$. 
\vskip.2cm\noindent
Let us now show \eqref{Ar}. The proof is similar to that of \cite[Lemma A.4]{BD}.
For $\varrho\ge \varepsilon$, $t\ge 0$ and $\vartheta,\omega\in[0,2\,\pi]$ we have
\[
\begin{split}
\left|u(\varrho\,e^{i\,\vartheta})-u(\varrho\,e^{i\,\omega})\right|^p\le  C\,\left|u(\varrho\,e^{i\,\vartheta})-u\left((\varrho+t)\,e^{i\,\frac{\omega+\vartheta}{2}}\right)\right|^p+C\,\left|u\left((\varrho+t)\,e^{i\,\frac{\omega+\vartheta}{2}}\right)-u(\varrho\,e^{i\,\omega})\right|^p,
\end{split}
\]
and (for $\vartheta\not=\omega$)
\[
\varrho^{-\tau\,p-1}\,|e^{i\,\vartheta}-e^{i\,\omega}|^{-\tau\,p-1}=(1+\tau\,p)\,\int_0^\infty \left[t+\varrho\,|e^{i\,\vartheta}-e^{i\,\omega}|\right]^{-\tau\,p-2}\,dt.
\]
Thus from the definition of $\mathcal{W}_\varepsilon(u)$,  we obtain with simple manipulations
\[
\begin{split}
\mathcal{W}_\varepsilon(u)&\le C\,\int_0^\infty\int_\varepsilon^\infty \int_{[0,2\,\pi]\times[0,2\,\pi]} \frac{\left|u(\varrho\,e^{i\,\vartheta})-u\left((\varrho+t)\,e^{i\,\frac{\vartheta+\omega}{2}}\right)\right|^p}{\left(t+\varrho\,|e^{i\,\vartheta}-e^{i\,\omega}|\right)^{2+\tau\,p}}\,\varrho\,d\vartheta\,d\omega\,d\varrho\,dt.
\end{split}
\]
Observe that
\[
\left|\varrho\,e^{i\,\vartheta}-(\varrho+t)\,e^{i\,\frac{\vartheta+\omega}{2}}\right|\le t+\varrho\,\left|e^{i\,\vartheta}-e^{i\,\frac{\vartheta+\omega}{2}}\right|,
\]
and
\[
\left|e^{i\,\vartheta}-e^{i\,\frac{\omega+\vartheta}{2}}\right|\le C\,|e^{i\,\vartheta}-e^{i\,\omega}|.
\]
Hence,
\[
\begin{split}
\mathcal{W}_\varepsilon(u)&\le C\,\int_0^\infty\int_\varepsilon^\infty \int_{[0,2\,\pi]\times[0,2\,\pi]} \frac{\left|u(\varrho\,e^{i\,\vartheta})-u((\varrho+t)\,e^{i\,\frac{\vartheta+\omega}{2}})\right|^p}{\left|\varrho\,e^{i\,\vartheta}-(\varrho+t)\,e^{i\,\frac{\vartheta+\omega}{2}}\right|^{2+\tau\,p}}\,\varrho\,d\vartheta\,d\omega\,d\varrho\,dt\\
&\le 2\,\frac{C}{\varepsilon}\,\int_{[0,\infty)\times [0,\infty)} \int_{[0,2\,\pi]\times[0,2\,\pi]} \frac{|u(\varrho\,e^{i\,\vartheta})-u(s\,e^{i\,\psi})|^p}{|\varrho\,e^{i\,\vartheta}-s\,e^{i\,\psi}|^{2+\tau\,p}}\,\varrho\,s\,d\vartheta\,d\psi\,d\varrho\,ds,
\end{split}
\]
which completes the proof of \eqref{Ar}.
\end{proof}
The following result is a general fact for bounded $W^{1,2}$ functions in the plane. This is exactly the same as \cite[Lemma 5]{Santambrogio-Vespri}, we reproduce the proof for the reader's convenience. 
\begin{lm}\label{lemma-Santambrogio-Vespri-5}
Let \(\varphi\in W^{1,2}(B_R)\cap L^{\infty}(B_R)\) be a function such that \(0\leq \varphi \leq M\). Let us suppose that there exists \(0<\gamma<1\) such that
\[
\left|\left\{\varphi>\frac{3}{4}\, M\right\}\cap B_R\right| \geq \gamma \,|B_R|.
\]
If we set \(\delta=\sqrt{\gamma/2}\), one of the following two alternatives occur:
\begin{itemize}
\item[$(\mathbf{A}_1)$] either
\[
\int_{B_{R}\setminus B_{\delta R}}|\nabla \varphi|^2 \,dx \geq \frac{1}{512\,\pi}\,\gamma\, M^2;
\]
\vskip.2cm
\item[$(\mathbf{A}_2)$] or the following subset of $[\delta R,R]$
\[
\left\{s\in[\delta R,R]\, :\, \varphi\geq \frac{5}{8}\,M,\ \textrm{ $\mathcal{H}^1-$a.\,e. on } \partial B_s\right\},
\]
has positive measure.
\end{itemize}
\end{lm}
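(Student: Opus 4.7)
The plan is to argue by contraposition: I will assume $(\mathbf{A}_2)$ fails and deduce $(\mathbf{A}_1)$. So suppose that for almost every $s \in [\delta R, R]$ the set $\{\varphi < 5M/8\} \cap \partial B_s$ has positive $\mathcal{H}^1$-measure.

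First I would exploit the measure hypothesis to get substantial mass of the superlevel set in the annulus. Setting $A := \{\varphi > 3M/4\} \cap B_R$, the hypothesis gives $|A| \ge \gamma |B_R|$, and the choice $\delta = \sqrt{\gamma/2}$ yields $|B_{\delta R}| = \delta^2 |B_R| = (\gamma/2)|B_R|$, so
\[
\big|A \cap (B_R \setminus B_{\delta R})\big| \ge \gamma|B_R| - |B_{\delta R}| = \frac{\gamma}{2}|B_R|.
\]
Writing the left-hand side in polar coordinates and using the trivial bound $\mathcal{H}^1(A \cap \partial B_s) \le 2\pi R$ immediately shows that the set
\[
E_1 := \big\{s \in [\delta R, R] : \mathcal{H}^1(A \cap \partial B_s) > 0\big\}
\]
has Lebesgue measure at least $\gamma R/4$.

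Next, the plan is to produce on each such circle a pointwise oscillation that translates into a lower bound for the tangential energy. For almost every $s$ the restriction $\varphi|_{\partial B_s}$ lies in $W^{1,2}(\partial B_s)$ by the standard Fubini-type lemma applied to $|\nabla \varphi|^2$, and hence admits an absolutely continuous representative on the circle (since $W^{1,2}$ in one dimension embeds into $C^{0}$). For $s$ in $E_1$ outside the negligible set where $(\mathbf{A}_2)$ would hold, I can pick points $x, y \in \partial B_s$ with $\varphi(x) > 3M/4$ and $\varphi(y) < 5M/8$. A one-dimensional Cauchy--Schwarz estimate on the circle (using that the tangential gradient is bounded by $|\nabla \varphi|$ and that the circle has length $2\pi s$) gives
\[
\frac{M^2}{64} \le \big(\varphi(x)-\varphi(y)\big)^2 \le 2\pi s \int_{\partial B_s}|\nabla \varphi|^2 \, d\mathcal{H}^1,
\]
so that $\int_{\partial B_s} |\nabla \varphi|^2\, d\mathcal{H}^1 \ge M^2/(128\pi s)$ for almost every $s \in E_1$.

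Finally, integrating over $s$ in polar coordinates and using $1/s \ge 1/R$ on $[\delta R, R]$ together with the lower bound $|E_1| \ge \gamma R/4$ gives
\[
\int_{B_R \setminus B_{\delta R}} |\nabla \varphi|^2\, dx \ge \int_{E_1} \frac{M^2}{128\pi s}\, ds \ge \frac{M^2}{128\pi R}\cdot \frac{\gamma R}{4} = \frac{\gamma M^2}{512\pi},
\]
which is exactly $(\mathbf{A}_1)$. The only point that requires a moment of care is the legitimacy of speaking of pointwise values of $\varphi$ on almost every circle $\partial B_s$ (so that the boxed inequality above makes sense), but this reduces to the standard Fubini observation that a $W^{1,2}$ function on the disc has $W^{1,2}$ traces, hence absolutely continuous representatives, on $\partial B_s$ for almost every $s$. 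Everything else is elementary bookkeeping in polar coordinates.
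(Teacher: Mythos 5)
Your argument is correct and is essentially the paper's own proof: the same decomposition of the superlevel set over the annulus in polar coordinates to bound the measure of the good radii by $\gamma R/4$, the same oscillation bound $M/8$ on each such circle when $(\mathbf{A}_2)$ fails, and the same one-dimensional Cauchy--Schwarz estimate integrated over the radii to produce the constant $\frac{1}{512\pi}\gamma M^2$. The remark about absolutely continuous representatives on almost every circle is a harmless (and welcome) elaboration of a point the paper leaves implicit.
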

\begin{proof}
We first observe that thanks to the hypothesis we have
\[
\begin{split}
\left|\left\{\varphi>\frac{3}{4}\, M\right\}\cap (B_R\setminus B_{\delta R})\right|&=
\left|\left\{\varphi>\frac{3}{4}\, M\right\}\cap B_R\right|-
\left|\left\{\varphi>\frac{3}{4}\, M\right\}\cap B_{\delta R}\right|\\
&\ge \gamma\,|B_R|-|B_{\delta R}|=(\gamma-\delta^2)\,|B_R|.
\end{split}
\]
By definition of \(\delta\), we get
\[
\left|\left\{\varphi>\frac{3}{4}\, M\right\}\cap (B_R\setminus B_{\delta R})\right|\ge \frac{\gamma}{2}\,|B_R|.
\]
We define the set
\[
\mathcal{X}=\left\{s\in [\delta R,R]\, :\, \mathcal{H}^1\left(\left\{x\in\partial B_s\, :\, \varphi(x)\ge \frac{3}{4}\,M\right\}\right)>0\right\}.
\]
Then 
\[
\begin{split}
\frac{\gamma}{2}\,|B_R|\le \left|\left\{\varphi>\frac{3}{4}\, M\right\}\cap (B_R\setminus B_{\delta R})\right|&=\int_{\mathcal{X}} \int_{\partial B_s} 1_{\{\varphi>3/4\,M\}}\,d\mathcal{H}^1\,ds\\
&\le 2\,\pi\,\int_{\mathcal{X}} s\,ds\le 2\,\pi\, R\,|\mathcal{X}|.
\end{split}
\]
This in turn implies that
\[
|\mathcal{X}|\ge \frac{\gamma}{4}\,R.
\]
Let us now suppose that alternative $(\mathbf{A}_2)$ does not occur. This implies that 
\[
\mathcal{H}^1\left(\left\{x\in\partial B_s\,:\, \varphi(x)< \frac{5}{8}\,M\right\}\right)>0,\qquad \mbox{ for a.\,e. }s\in[\delta R,R].
\]
Thus for almost every $s\in \mathcal{X}$, we have
\[
\osc_{\partial B_s} \varphi\ge \frac{3}{4}\,M-\frac{5}{8}\,M=\frac{M}{8}.
\]
By observing that $\partial B_s$ is one-dimensional, we obtain
\[
\frac{M}{8}\le\osc_{\partial B_s} \varphi\le \int_{\partial B_s} |\nabla_\tau\varphi|\,d\mathcal{H}^1\le (2\,\pi\,R)^\frac{1}{2}\,\left(\int_{\partial B_s} |\nabla_\tau \varphi|^2\,d\mathcal{H}^1\right)^\frac{1}{2},
\]
where $\nabla_\tau$ denotes the tangential gradient (by using polar coordinates $x=\varrho\,e^{i\,\vartheta}$, this is nothing but the $\vartheta-$derivative). By taking the square in the previous estimate, integrating in $s\in\mathcal{X}$ and using the lower-bound on $|\mathcal{X}|$, we get
\[
\int_{B_{R}\setminus B_{\delta R}}|\nabla \varphi|^2 \,dx \geq \int_\mathcal{X} \int_{\partial B_s} |\nabla \varphi|^2\,d\mathcal{H}^1\ge \frac{M^2}{128}\,\frac{1}{\pi\,R}\,|\mathcal{X}|\ge \frac{M^2}{128}\,\frac{1}{\pi}\,\frac{\gamma}{4},
\]
which is alternative $(\mathbf{A}_1)$.
\end{proof}

\end{document}